\newtheorem{theorem}{Theorem}[section] 
\newtheorem{lemma}[theorem]{Lemma}
\newtheorem{corollary}[theorem]{Corollary}
\newtheorem{proposition}[theorem]{Proposition}
\theoremstyle{definition}
\theoremstyle{remark}
\newtheorem{remark}{Remark}
\newcommand{\GL}{\rm GL}
\newcommand{\Ker}{{\rm Ker}}
\begin{document}

\title[Iwahori-Hecke model for mod $p$ representations of ${\rm GL}_2(F)$]{Iwahori-Hecke model for mod $p$ representations of ${\rm GL}_2(F)$}
\author{U. K. Anandavardhanan and Arindam Jana}

\address{Department of Mathematics, Indian Institute of Technology Bombay, Mumbai - 400076, India.}
\email{anand@math.iitb.ac.in}

\address{Department of Mathematics, Indian Institute of Technology Bombay, Mumbai - 400076, India.}
\email{arindam@math.iitb.ac.in}

\subjclass{Primary 20G05; Secondary 22E50, 11F70}

\date{}

\begin{abstract}
For a $p$-adic field $F$, the space of pro-$p$-Iwahori invariants of a universal supersingular mod $p$ representation $\tau$ of ${\rm GL}_2(F)$ is determined in the works of Breuil, Schein, and Hendel. The representation $\tau$ is introduced by Barthel and Livn\'e and this is defined in terms of the spherical Hecke operator. In \cite{ab13,ab15}, an Iwahori-Hecke approach was introduced to study these universal supersingular representations in which they can be characterized via the Iwahori-Hecke operators. In this paper, we construct a certain quotient $\pi$ of $\tau$, making use of the Iwahori-Hecke operators. When $F$ is not totally ramified over $\mathbb Q_p$, the representation $\pi$ is a non-trivial quotient of $\tau$. We determine a basis for the space of invariants of $\pi$ under the pro-p Iwahori subgroup. A pleasant feature of this "new" representation $\pi$ is that its space of pro-$p$-Iwahori invariants admits a more uniform description vis-\`a-vis the description of the space of pro-$p$-Iwahori invariants of $\tau$.         
\end{abstract}

\maketitle

\section{Introduction}\label{introduction}

For a $p$-adic field $F$, the study of irreducible smooth mod $p$ representations of $\GL_2(F)$ started with the famous work of Barthel and Livn\'e \cite{bl94}. They showed that there exist irreducible smooth representations, called supersingular representations, which cannot be obtained as a subquotient of a parabolically induced representation. 

It is shown in \cite{bl94} that a supersingular representation can be realized as the quotient of a universal module constructed as follows. Let $G=\GL_2(F)$ and let $K$ be its standard maximal compact subgroup. Let $Z$ denote the center of $G$. For an irreducible representation $\sigma$ of $KZ$, let ind$_{KZ}^G \sigma$ be the representation of $G$ compactly induced from $\sigma$. Its endomorphism algebra is a polynomial algebra in one variable:
\[{\rm End}_G\left({\rm ind}_{KZ}^G \sigma \right) \simeq \overline{\mathbb F}_p[T],\]  
where $T$ is the standard spherical Hecke operator and $\overline{\mathbb F}_p$ denotes an algebraic closure of the finite field $\mathbb F_p$ of $p$ elements \cite[Proposition 8]{bl94}. The universal module in consideration is 
\[\tau = \frac{{\rm ind}_{KZ}^G \sigma}{(T)}\]
and a supersingular representation of $G$ is an irreducible quotient of the universal module for some $\sigma$ of $KZ$ up to a twist by a character \cite{bl94}.  

Explicitly constructing a supersingular representation of $\GL_2(F)$ is a challenging problem when $F \neq \mathbb Q_p$ \cite{bp12}. When $F=\mathbb Q_p$, Breuil proved that the universal representation $\tau$ itself is irreducible \cite[Theorem 1.1]{bre03}. The key step in Breuil's proof of the irreducibility of $\tau$ is the explicit computation of its $I(1)$-invariant space, which is of dimension $2$, where $I(1)$ is the pro-$p$-Iwahori subgroup of $K$ \cite[Theorem 3.2.4]{bre03}. The space of $I(1)$-invariants of $\tau$ is infinite dimensional when $F\neq \mathbb Q_p$. An explicit basis for this infinite dimensional space is computed by Schein when $F$ is totally ramified over $\mathbb Q_p$ \cite[\S 2]{sch11} and by Hendel more generally for any $p$-adic field $F$ \cite[Theorem 1.2]{hen18}.

One can also construct a universal module from the perspective of the Iwahori-Hecke operators instead of the spherical Hecke operator $T$ \cite{ab13,ab15}. For this, instead of doing compact induction from an irreducible representation of $KZ$, we start with a regular character $\chi$ of $IZ$, where $I$ is the Iwahori subgroup $K$, and consider the compactly induced representation ind$_{IZ}^G \chi$. Its endomorphism algebra is \cite[Proposition 13]{bl94}: 
\[{\rm End}_G({\rm ind}_{IZ}^G \chi) 
\simeq\frac{\overline{\mathbb{F}}_p[T_{-1,0,},T_{1,2}]}{(T_{-1,0}T_{1,2},T_{1,2}T_{-1,0})},\]
where $T_{-1,0}$ and $T_{1,2}$ are the Iwahori-Hecke operators. When $F$ is a totally ramified extension of $\mathbb Q_p$, it is proved in \cite[Proposition 3.1 \& Remark 1]{ab15} that the image of one of these operators is equal to the kernel of the other; i.e.,
\begin{equation}\label{imker}
{\rm Im}~ T_{-1,0} = {\rm Ker}~ T_{1,2} ~\&~ {\rm Im}~ T_{1,2} = {\rm Ker}~ T_{-1,0}.
\end{equation}

Let $\mathbb F_q$ be the residue field of $F$ where $q=p^f$. Assume $0<r<q-1$ and write $r=r_0+r_1p+\dots+r_{f-1}p^{f-1}$ with $0 \leq r_i \leq p-1$ for $0 \leq i \leq f-1$. Let 
\[\sigma_r = {\rm Sym}^{r_0} \overline{\mathbb F}_p^2 \otimes {\rm Sym}^{r_1} \overline{\mathbb F}_p^2 \circ {\rm Frob} \otimes \dots \otimes  {\rm Sym}^{r_{f-1}} \overline{\mathbb F}_p^2 \circ {\rm Frob}^{f-1}\] 
be an irreducible representation of $\GL_2(\mathbb F_q)$, where Frob is the Frobenius morphism. We continue to denote the corresponding irreducible representation of $K$, obtained via inflation, by $\sigma_r$. Similarly, let $\chi_r$ be the character of $I$, valued in $\overline{\mathbb F}_p^\times$, obtained via the character of the Borel subgroup of $\GL_2(\mathbb F_q)$ defined by
\[\left(\begin{array}{cc}
a & b\\\
0 & d
\end{array}\right)
\mapsto d^r.\] 

We fix a uniformizing element $\varpi$ of the ring of integers $\mathcal{O}$ of $F$. The representation $\sigma_r$ is treated as a representation of $KZ$ by making diag$(\varpi,\varpi)$ acting trivially and similarly the character $\chi_r$ is treated as a character of $IZ$. 

For $g\in G$ and $v \in \sigma_r$, let $g \otimes v$ be the function in ${\rm ind}_{KZ}^G \sigma_r$ supported on $KZg^{-1}$ that sends $g^{-1}$ to $\sigma_r(k)v$. Similarly, for $g \in G$, by $[g,1]$ we define the function in ${\rm ind}_{IZ}^G \chi_r$ which is supported on $IZg^{-1}$ and sending $g^{-1}$ to $1$. It can be seen that every element of ${\rm ind}_{IZ}^G \chi_r$ (resp. ${\rm ind}_{KZ}^G \sigma_r$) is a finite sum of these type of functions $[g,1]$ (resp. $g \otimes v$).

Now \cite[Theorem 1.1]{ab15} takes the form:
\begin{theorem}\label{iso}
Let $F$ be a finite extension of $\mathbb Q_p$ with residue field $\mathbb F_q$ and residue degree $f$. Let $0 < r < q - 1$ and $r =r_0+r_1p+\dots+r_{f-1}p^{f-1}$ with $0 \leq r_i \leq p - 1$. Then
\[\tau_r = \frac{{\rm ind}_{KZ}^G \sigma_r}{(T)} \simeq \frac{{\rm ind}_{IZ}^G \chi_r}{({\rm Im}~ T_{1,2}, {\rm Ker}~ T_{1,2})}.\]
Moreover, this isomorphism is determined by
\[ {\rm Id}\otimes\bigotimes_{j=0}^{f-1}x_j^{r_j} \mod T \mapsto [\beta,1] \mod ({\rm Im}~ T_{1,2}, {\rm Ker}~ T_{1,2}).\]
\end{theorem}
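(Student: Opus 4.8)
The plan is to exhibit an explicit $G$-equivariant surjection $\Phi \colon {\rm ind}_{IZ}^G \chi_r \to \tau_r = {\rm ind}_{KZ}^G \sigma_r / (T)$, identify its kernel with $({\rm Im}\, T_{1,2}, {\rm Ker}\, T_{1,2})$, and check the normalization on the distinguished generator. First I would recall that since $\chi_r$ is the inflation of the Borel character $\mathrm{diag}(a,b) \mapsto b^r$, there is a natural $I$-equivariant map $\chi_r \hookrightarrow \sigma_r|_I$ sending $1$ to the highest-weight-type vector $v_r := \bigotimes_{j=0}^{f-1} x_j^{r_j}$ (the line on which the unipotent radical acts trivially and the torus acts by $\chi_r$); this is a standard fact about the principal series structure of $\sigma_r$ over $\mathbb{F}_q$. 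By Frobenius reciprocity for compact induction this produces a canonical $G$-map $\widetilde{\Phi} \colon {\rm ind}_{IZ}^G \chi_r \to {\rm ind}_{KZ}^G \sigma_r$ with $[g,1] \mapsto g \otimes v_r$ for all $g \in G$; composing with the projection modulo $(T)$ gives $\Phi$. Surjectivity of $\Phi$ is the first substantive point: one shows the $G$-submodule of ${\rm ind}_{KZ}^G \sigma_r$ generated by $\mathrm{Id}\otimes v_r$ together with the image of $T$ is everything, equivalently that $\tau_r$ is generated by the image of $\mathrm{Id}\otimes v_r$. This follows because $\tau_r$ is generated over $G$ by $\mathrm{Id}\otimes v$ for any single nonzero $v \in \sigma_r$ (indeed $\sigma_r$ is $K$-irreducible so $K \cdot (\mathrm{Id}\otimes v_r)$ spans all of $\mathrm{Id}\otimes \sigma_r$ already, and $\mathrm{Id}\otimes\sigma_r$ generates ${\rm ind}_{KZ}^G\sigma_r$ as a $G$-module).

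The heart of the argument is computing $\ker \Phi$. I would first show $({\rm Im}\, T_{1,2}, {\rm Ker}\, T_{1,2}) \subseteq \ker\Phi$. For ${\rm Ker}\, T_{1,2}$: the point is that $T_{1,2}$ is, up to the identification $\chi_r \subset \sigma_r|_I$, essentially the restriction to the $\chi_r$-line of the spherical operator $T$ followed by a Hecke-translate, so any element killed by $T_{1,2}$ maps into the $(T)$-submodule. More precisely, one uses the explicit formulas for $T_{-1,0}$, $T_{1,2}$ from \cite{ab13,ab15} and for $T$ from \cite{bl94}: writing out $\widetilde{\Phi}(T_{1,2}[g,1])$ and $\widetilde{\Phi}([g,1] \text{ with } T_{1,2}[g,1]=0)$ in terms of the $g\otimes v$ basis, one matches them against $T(g'\otimes v')$. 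The relation $T_{1,2}T_{-1,0} = 0$ in the Iwahori-Hecke algebra is what makes ${\rm Im}\, T_{-1,0} \subseteq {\rm Ker}\, T_{1,2}$, and one checks ${\rm Im}\, T_{1,2}$ lands in $(T)$ by a parallel computation. For the reverse inclusion $\ker\Phi \subseteq ({\rm Im}\, T_{1,2}, {\rm Ker}\, T_{1,2})$, I would pass to a dimension/structure count: both sides are quotients of ${\rm ind}_{IZ}^G\chi_r$, and one can either (a) compare Hecke-module structures — $\tau_r$ as a module over ${\rm End}_G({\rm ind}_{IZ}^G\chi_r) = \overline{\mathbb{F}}_p[T_{-1,0},T_{1,2}]/(T_{-1,0}T_{1,2},T_{1,2}T_{-1,0})$ must have $T_{1,2}$ acting as zero (this is exactly what \cite{ab15} established, and is where hypothesis \eqref{imker} or its weaker form enters), so $\ker\Phi \supseteq ({\rm Im}\, T_{1,2}, {\rm Ker}\, T_{1,2})$ forces equality once we know the quotient on the right is already $\cong \tau_r$; or (b) invoke the already-proven \cite[Theorem 1.1]{ab15} directly, whose content is precisely this identification, and note that the present statement is just a restatement with the explicit $\sigma_r, \chi_r$ made visible.

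Finally, the normalization assertion ${\rm Id}\otimes v_r \bmod T \mapsto [\beta,1] \bmod (\cdots)$ — where $\beta$ is the Iwahori-double-coset representative chosen in \cite{ab15}, presumably $\beta = \mathrm{Id}$ or a specific Weyl/uniformizer element — follows by tracking the generator through $\widetilde{\Phi}$: we have $\widetilde{\Phi}([\beta,1]) = \beta \otimes v_r$, and one checks $\beta\otimes v_r \equiv \mathrm{Id}\otimes v_r \pmod{(T)}$, which holds because $\beta$ differs from $\mathrm{Id}$ by an element of $IZ$ acting on the $\chi_r$-line, or because the relevant coset identity $\beta\otimes v_r = \mathrm{Id}\otimes \sigma_r(\cdot)v_r$ collapses modulo $T$. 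I expect the main obstacle to be the explicit Hecke-operator bookkeeping in the inclusion $({\rm Im}\, T_{1,2},{\rm Ker}\, T_{1,2}) \subseteq \ker\Phi$: one must reconcile two different normalizations (Barthel–Livné's $T$ versus the Iwahori operators $T_{\pm}$) on overlapping supports, and the intertwining $\chi_r \hookrightarrow \sigma_r$ must be inserted at exactly the right spot so that the support conditions (functions supported on $KZg^{-1}$ versus $IZg^{-1}$) propagate correctly. Modulo that computation — which is essentially carried out in \cite{ab15} — the theorem follows.
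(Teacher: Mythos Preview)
The paper gives no self-contained proof here; the remark immediately following the statement says this is \cite[Theorem~4.1]{ab15}, proved there for totally ramified $F$, and that the same argument goes through for general $F$. Your proposal ultimately defers to \cite{ab15} as well, so at that level the approaches agree.

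However, your sketch of the construction contains a concrete error. You propose to build $\widetilde\Phi$ from an $I$-embedding $\chi_r \hookrightarrow \sigma_r|_I$ sending $1 \mapsto v_r = \bigotimes_j x_j^{r_j}$, claiming this is the line on which the unipotent radical acts trivially and the torus acts by $\chi_r$. The vector $\bigotimes_j x_j^{r_j}$ is indeed the $I(1)$-fixed line in $\sigma_r$ (the $U$-invariants), but in the paper's conventions the torus acts on it by $a^r$, whereas $\chi_r(\mathrm{diag}(a,d)) = d^r$. Hence ${\rm Hom}_I(\chi_r,\sigma_r) = 0$ and the map $\widetilde\Phi\colon [g,1]\mapsto g\otimes v_r$ as you wrote it is not $I$-equivariant. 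The correct construction goes through the principal series, using transitivity ${\rm ind}_{IZ}^G\chi_r \cong {\rm ind}_{KZ}^G({\rm ind}_{IZ}^{KZ}\chi_r)$ together with a $KZ$-surjection ${\rm ind}_{IZ}^{KZ}\chi_r \twoheadrightarrow \sigma_r$; equivalently, one builds the map after twisting by $\beta$.

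This is also why your normalization check breaks down. You guess that ``$\beta$ differs from ${\rm Id}$ by an element of $IZ$'', but in fact $\beta = \left(\begin{smallmatrix}0&1\\\varpi&0\end{smallmatrix}\right) = \alpha w$ is the nontrivial element normalizing $I$ and represents a different edge of the tree. Its appearance in the formula ${\rm Id}\otimes\bigotimes_j x_j^{r_j} \leftrightarrow [\beta,1]$ is exactly what accounts for the discrepancy between $\chi_r = d^r$ and the character $a^r$ carried by $\sigma_r^{I(1)}$; it is not an artifact that collapses modulo $IZ$. Verifying the normalization genuinely requires the explicit Hecke identities from \cite{ab15}, not the coset shortcut you suggest.
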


\begin{remark}
Theorem \ref{iso} is stated and proved in \cite[Theorem 4.1]{ab15} when $F$ is a totally ramified extension of $\mathbb Q_p$ (see \cite[Remark 3]{ab15}) and exactly the same proof goes through in the general case as well. 
\end{remark}

\begin{remark}\label{rmk-hendel}
As mentioned earlier, the space of $I(1)$-invariants of $\tau_r$ is computed by Hendel \cite[Theorem 1.2]{hen18}. Stating an explicit basis for this space involves four cases; (i) $e=1, f=1$, (ii) $e>1, f=1$, (iii) $e=1,f>1$, and (iv) $e>1,f>1$.
\end{remark}

In this paper, we study a new universal representation given by
\[\pi_r = \frac{{\rm ind}_{IZ}^G \chi_r}{({\rm Ker}~ T_{-1,0}, {\rm Ker}~ T_{1,2})}\]
which is a further quotient of $\tau_r$. Note that this representation equals $\tau_r$ when $F$ is totally ramified over $\mathbb Q_p$ by (\ref{imker}). We show that when $F$ is not totally ramified over $\mathbb Q_p$, we have strict containments 
\begin{equation}\label{imker2}
{\rm Im}~ T_{-1,0} \subsetneq {\rm Ker}~ T_{1,2} ~\&~ {\rm Im}~ T_{1,2} \subsetneq {\rm Ker}~ T_{-1,0}.
\end{equation}
and thus we have a new representation to investigate for its properties (cf. Remark \ref{rmk-imker2}). At this stage, we also note that the representation $\pi_r$ is indeed non-trivial (cf. Lemma \ref{kernels sum}). 

The main result of this paper gives an explicit basis for the space of $I(1)$-invariants of $\pi_r$. This space turns out to be infinite dimensional as well as in the case of \cite[Theorem 1.2]{hen18}. However, in this case the basis can be written in a uniform manner whenever $F\neq \mathbb Q_p$. Thus, the statement involves only two cases; (i) $F=\mathbb Q_p$ and (ii) $F \neq \mathbb Q_p$. It is interesting to compare our result with that of Hendel in this aspect (cf. Remark \ref{rmk-hendel}).

In order to state the theorem, we introduce a few more notations. Set $I_0=\{0\}$, and for $n \in \mathbb N$, let 
\[I_n=\left\{[{\mu}_0]+[{\mu}_1]\varpi+\dots+[\mu_{n-1}]{\varpi}^{n-1}\mid
{\mu}_i\in \mathbb{F}_q\right\} \subset \mathcal{O},\]
where, for $x \in \mathbb F_q$, we denote its multiplicative representative in $\mathcal O$ by $[x]$.
If $0\leq m\leq n,$ let $[\cdot]_m:I_n\rightarrow I_m$ be the truncation map defined by
\[\sum\limits_{\substack{i=0}}^{n-1}[\lambda_i]\varpi^i\mapsto \sum\limits_{\substack{i=0}}^{m-1}[\lambda_i]\varpi^i.\]  
Let us denote
\[\alpha=\left(\begin{array}{cc}
1 & 0 \\
0 & \varpi
\end{array}\right),~\beta=\left(\begin{array}{cc}
0      & 1 \\
\varpi & 0
\end{array}\right),~ w=\left(\begin{array}{cc}
0 & 1 \\
1 & 0
\end{array}\right),\]
and observe that $\beta=\alpha w$ normalizes $I(1)$. For any $n \in \mathbb N$, we denote
\begin{align*}
s_n^k &=\sum\limits_{\substack{\mu\in I_n}}\mu_{n-1}^k\left[\left(\begin{array}{cc}
\varpi^n & \mu \\
0    &  1
\end{array}\right),1\right],\\
t_n^s &=\sum\limits_{\substack{\mu\in I_n}}\mu_{n-1}^s\left[\left(\begin{array}{cc}
\varpi^{n-1} & [\mu]_{n-1} \\
0    &  1
\end{array}\right) 
\left(\begin{array}{cc}
1& [\mu_{n-1}] \\
0 & 1
\end{array}\right)w,1\right],
\end{align*} 
where $0\leq k,s\leq q-1$. For $0\leq l\leq f-1$ and $m\geq 1$, we define the following sets
\begin{align*}
\mathcal{S}_m^l &=\{s_n^{q-1-r+p^l}\}_{n\geq m}\cup\{\beta s_n^{q-1-r+p^l}\}_{n\geq m} \\
\mathcal{S}_m &= \bigcup_{l=0}^{f-1}\mathcal{S}_m^l,\\
\mathcal T_m^l &=\{t_n^{r+p^l}\}_{n\geq m}\cup\{\beta t_n^{r+p^l}\}_{n\geq m},\\
\mathcal T_m &= \bigcup_{l=0}^{f-1}T_m^l.
\end{align*}

Now we state the main theorem of this paper.

\begin{theorem}\label{thm-main}
Let $F$ be a finite extension of $\mathbb{Q}_p$ with ramification index $e$. Let $\mathbb{F}_q$ be the residue field of $F$ with $q=p^f.$ Let $0<r<q-1$ and $r=r_0+r_1p+\dots+r_{f-1}p^{f-1}$ with $0<r_j<p-1$ for all $0\leq j\leq f-1$. When $f=1$, we assume $2<r<p-3$. Then a basis of the space of $I(1)$-invariants of the representation
\[\pi_r=\frac{{\rm ind}_{IZ}^G \chi_r}{(\Ker~T_{-1,0},\Ker~T_{1,2})}\]
as an $\overline{\mathbb F}_p$-vector space is given by the images of the following sets in $\pi_r$:
\begin{enumerate}
\item[{\rm(1)}] $\left\{\left[{\rm Id}, 1\right], \left[\beta, 1\right]\right\}$ when $F=\mathbb Q_p$ 
\item[{\rm(2)}] $\mathcal{S}_2\bigcup \left\{\left[{\rm Id},1\right],\left[\beta,1\right]\right\}\bigcup \mathcal T_2~$ when $F \neq \mathbb Q_p$.
\end{enumerate}
\end{theorem}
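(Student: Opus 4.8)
The plan is to compute the $I(1)$-invariants of $\pi_r$ by working on the Iwahori-Hecke side, i.e., with the compactly induced representation ${\rm ind}_{IZ}^G \chi_r$ and its quotient by $(\Ker~T_{-1,0}, \Ker~T_{1,2})$. First I would recall the Bruhat-Tits tree picture: the functions $[g,1]$ are supported on cosets $IZg^{-1}$, and a natural system of coset representatives $g$ is indexed by vertices/edges of the tree, organized by ``distance'' $n$ from the base point. The key preliminary step is to describe the $I(1)$-action on ${\rm ind}_{IZ}^G \chi_r$ in these coordinates and to identify, for each $n$, which $I(1)$-fixed vectors one can build at ``level $n$''. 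Because $\chi_r$ is a character of $I$ that is nontrivial on the torus, averaging a single $[g,1]$ over $I(1)/(I(1)\cap gIZg^{-1})$ (or rather over suitable unipotent cosets) produces the elementary invariant vectors; the sums $s_n^k$ and $t_n^s$ defined in the excerpt are precisely such averages, with the extra weight $\mu_{n-1}^k$ chosen so that the vector transforms correctly under the torus part of $I(1)$. So step one is: show that $\{s_n^k, \beta s_n^k, t_n^s, \beta t_n^s\}$ (for appropriate ranges of $k,s,n$) span the full space of $I(1)$-invariants of ${\rm ind}_{IZ}^G \chi_r$, and that within each level they are linearly independent.

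The second, and main, step is to understand the two relation subspaces $\Ker~T_{-1,0}$ and $\Ker~T_{1,2}$, intersected with (or projected to) the $I(1)$-invariants. Here I would use the explicit formulas for the Iwahori-Hecke operators $T_{-1,0}$ and $T_{1,2}$ acting on the $[g,1]$'s — these are the finite sums over unipotent translates coming from the double coset decompositions $IZ\alpha^{-1}IZ$ and $IZ\alpha IZ$ (up to the $w$-twist), and they interact with the tree-distance filtration by shifting $n$ by $\pm 1$. The crucial computation is to evaluate $T_{-1,0}$ and $T_{1,2}$ on the invariant vectors $s_n^k$ and $t_n^s$ and read off exactly which linear combinations lie in the kernels. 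I expect that the combinatorics of lifting from $\mathbb F_q$ to $\mathcal O$ via multiplicative representatives — specifically the appearance of $r + p^l$ and $q-1-r+p^l$ in the exponents — comes precisely from expanding $([\mu_{n-1}] + [\nu]\varpi)^r$-type expressions and tracking which monomials survive modulo the kernels; the hypothesis $0<r_j<p-1$ (and $2<r<p-3$ when $f=1$) is there to guarantee that these surviving monomials are distinct and nonzero, so that the kernels are spanned by clean combinations. Carrying out these Hecke-operator computations level by level, and assembling them into a description of $(\Ker~T_{-1,0} + \Ker~T_{1,2}) \cap ({\rm ind}_{IZ}^G\chi_r)^{I(1)}$, is the technical heart of the argument and the main obstacle.

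The third step is bookkeeping: the $I(1)$-invariants of the quotient $\pi_r$ are \emph{not} simply the quotient of the invariants — one must control whether $H^1$ contributions intervene, but since $I(1)$ is a pro-$p$ group and we are in characteristic $p$, and the relations are generated $G$-equivariantly by images of the Hecke operators, I would argue (as in Breuil's and Hendel's approach) that taking $I(1)$-invariants is compatible with the relevant quotients once one knows the relation submodule is ``generated in low level'' in an appropriate sense, or alternatively exhibit explicit preimages and a spanning/independence argument directly in $\pi_r$. Concretely, I would (i) show the proposed basis elements are $I(1)$-invariant and nonzero in $\pi_r$ — using Lemma \ref{kernels sum} and the fact that $\beta$ normalizes $I(1)$ for the $\beta$-translates — (ii) show they span, by checking that every level-$n$ invariant of ${\rm ind}_{IZ}^G\chi_r$ is, modulo the kernels, congruent to a combination of the listed vectors (this is where the level-shifting formulas from step two let one push everything down to levels $0,1,2$ plus the surviving tail families $\mathcal S_2, \mathcal T_2$), and (iii) prove linear independence in $\pi_r$ by a filtration/leading-term argument: a nontrivial relation among the basis images would give an element of $(\Ker~T_{-1,0}+\Ker~T_{1,2})^{I(1)}$ with prescribed leading level, contradicting the level-$n$ independence established together with the explicit kernel description.

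For the $F = \mathbb Q_p$ case (so $e=f=1$), the tree is much thinner and the kernel relations already collapse the higher levels, so only $[{\rm Id},1]$ and $[\beta,1]$ survive — this recovers Breuil's two-dimensional answer and can be treated as a special, simpler instance of the same computation, or simply cited from \cite{bre03} via Theorem \ref{iso}. The uniformity in the case $F \neq \mathbb Q_p$ (as opposed to Hendel's four-case description for $\tau_r$) should emerge because passing from $(\Ker~T_{1,2}, {\rm Im}~T_{1,2})$ to the more symmetric $(\Ker~T_{-1,0}, \Ker~T_{1,2})$ removes exactly the $e$-versus-$f$ asymmetry in the surviving monomials; I would highlight this structural point as the payoff once the computation is done.
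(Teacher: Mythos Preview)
Your outline has the right overall shape --- show the listed vectors are invariant, show linear independence, show spanning via a level-by-level filtration argument --- and this is indeed how the paper proceeds. But there is a genuine conceptual gap in your step one and, as a consequence, in your spanning argument (step three (ii)).

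You propose to ``show that $\{s_n^k, \beta s_n^k, t_n^s, \beta t_n^s\}$ \ldots\ span the full space of $I(1)$-invariants of ${\rm ind}_{IZ}^G \chi_r$''. This is false: the $I(1)$-invariants of the full induced representation are exactly $\{s_n^0, t_n^0, \beta s_n^0, \beta t_n^0\}_{n\ge 0}$ (only the exponent $0$ survives). The vectors $s_n^{q-1-r+p^l}$ and $t_n^{r+p^l}$ in the claimed basis are \emph{not} $I(1)$-invariant upstairs; they only become invariant after passing to the quotient. So your step one cannot work as stated, and your spanning argument in (ii) --- ``every level-$n$ invariant of ${\rm ind}_{IZ}^G\chi_r$ is, modulo the kernels, congruent to a combination of the listed vectors'' --- would at best show that the \emph{image} of $({\rm ind}_{IZ}^G\chi_r)^{I(1)}$ lies in the span, which is strictly smaller than $\pi_r^{I(1)}$. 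You correctly flag that $\pi_r^{I(1)}$ is not the quotient of invariants, but you never actually say how to get at the extra vectors.

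What the paper does instead (following Hendel) is to take an arbitrary lift $f \in {\rm ind}_{IZ}^G\chi_r$ of an $I(1)$-invariant of $\pi_r$, so that $gf-f \in (\Ker T_{-1,0},\Ker T_{1,2})$ for every $g\in I(1)$, and then constrain the shape of $f$ level by level by applying specific test elements $g=\left(\begin{smallmatrix}1 & -\varpi^{n-m}\\ 0 & 1\end{smallmatrix}\right)$ for $1\le m\le n$. Writing the top-level coefficients of $f$ as polynomials in $\mu_0,\ldots,\mu_{n-1}$, two combinatorial lemmas pin down the admissible monomials in $\mu_{n-1}$ (this is where the exponents $q-1-r+p^l$ and $r+p^l$ appear, via Lucas's theorem and the explicit kernel description) and then force the remaining coefficient polynomials in $\mu_0,\ldots,\mu_{n-2}$ to be constant. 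This yields $f_n \equiv \sum_l a_{l,n}s_n^{q-1-r+p^l}+\sum_l b_{l,n}t_n^{r+p^l}$ plus a vector in $B(n-1)$, and one iterates. The case $e=f=1$ requires an extra argument showing that even the $n\ge 2$ contributions die (using the Witt-vector carry term), which is why only $[{\rm Id},1],[\beta,1]$ survive there. Your proposal does not contain this mechanism, and without it the spanning step does not go through.
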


\begin{remark}\label{new}
The representation $\pi_r$ that we construct and investigate in this paper is a quotient of the representation $\tau_r$ considered in \cite{bl94,bre03,sch11,hen18};
\[0 \rightarrow \frac{\Ker~T_{-1,0}}{{\rm Im~}T_{1,2}} \rightarrow \tau_r \rightarrow \pi_r \rightarrow 0.\]
When $F$ is totally ramified over $\mathbb Q_p$, the representations $\tau_r$ and $\pi_r$ are isomorphic by Theorem \ref{iso} together with the equality of spaces in (\ref{imker}). However, $\pi_r$ is a ``new" representation when $F$ is not totally ramified over $\mathbb Q_p$. That there is no isomorphism between $\tau_r$ and $\pi_r$ can be checked, for instance, from the characterization of the space of $I(1)$-invariants of $\pi_r$ in Theorem \ref{thm-main} and that of $\tau_r$ in \cite[Theorem 1.2]{hen18}. We give more details in \S \ref{isodetails}.
\end{remark}

Following the argument in \cite[Conclusion 3.10]{hen18} word to word, we get the following corollary to Theorem \ref{thm-main}.

\begin{corollary} 
The representation $\pi_r$ is indecomposable; i.e.,
${\rm End}_G(\pi_r) \simeq \overline{\mathbb F}_p.$
\end{corollary}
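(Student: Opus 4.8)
Here is the plan. The strategy is to mirror the argument of \cite[Conclusion 3.10]{hen18} verbatim, feeding in Theorem \ref{thm-main} in place of \cite[Theorem 1.2]{hen18}. Write $H=I/I(1)$, identified with the diagonal torus of $\mathrm{GL}_2(\mathbb{F}_q)$, and let $\psi\colon H\to\overline{\mathbb{F}}_p^\times$, $\mathrm{diag}([a],[d])\mapsto d^r$, be the character through which $\chi_r$ factors on $H$. First I would observe that $\mathrm{ind}_{IZ}^G\chi_r$ is generated over $G$ by $[\mathrm{Id},1]$ (since $[g,1]=g\cdot[\mathrm{Id},1]$), hence $\pi_r$ is generated over $G$ by the image $v_0$ of $[\mathrm{Id},1]$; consequently $\phi\mapsto\phi(v_0)$ is an injection $\mathrm{End}_G(\pi_r)\hookrightarrow\pi_r^{I(1)}$, and as $v_0$ transforms under $I$ by $\chi_r$, the vector $\phi(v_0)$ must lie in the $\psi$-isotypic subspace $\pi_r^{I(1),\psi}$ for the $H$-action on $\pi_r^{I(1)}$. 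So it suffices to prove $\dim_{\overline{\mathbb{F}}_p}\pi_r^{I(1),\psi}=1$: granting this, $\mathrm{End}_G(\pi_r)$ embeds into the line $\overline{\mathbb{F}}_p v_0$ and, being nonzero because $\pi_r\neq 0$ by Lemma \ref{kernels sum}, must equal $\overline{\mathbb{F}}_p\cdot\mathrm{Id}$.

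To compute $\pi_r^{I(1),\psi}$ I would run through the explicit basis of $\pi_r^{I(1)}$ given in Theorem \ref{thm-main} and read off the $H$-weight of each basis vector from the left-coset combinatorics underlying the definitions of $s_n^k$ and $t_n^s$. The key computation is that, for $h=\mathrm{diag}([a],[d])$, one has $h\cdot[\mathrm{Id},1]=d^r[\mathrm{Id},1]$, $h\cdot[\beta,1]=a^r[\beta,1]$, and, using $h\cdot[g,1]=d^r[g',1]$ where $g\mapsto g'$ induces the bijection $\mu\mapsto[a/d]\mu$ on $I_n$ and then reindexing the sums,
\[ h\cdot s_n^k = a^{-k}d^{r+k}\,s_n^k,\qquad h\cdot t_n^s = a^{-s}d^{r+s}\,t_n^s \]
for every $n$, while the $\beta$-twisted families carry the $w$-swapped weights (interchange $a$ and $d$) because $\beta$ normalises $I(1)$ and acts on $H$ through $w$. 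Specialising to $k=q-1-r+p^l$ and $s=r+p^l$ and reducing exponents modulo $q-1$, the weights of the basis elements are $\psi=(d^r)$ for $[\mathrm{Id},1]$, and $a^r$, $a^{r-p^l}d^{p^l}$, $a^{p^l}d^{r-p^l}$, $a^{-(r+p^l)}d^{2r+p^l}$, $a^{2r+p^l}d^{-(r+p^l)}$ for $[\beta,1]$ and the four families.

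The hard part will be verifying that none of these last weights equals $\psi$, and this is exactly where the hypotheses are spent. Matching a weight with $\psi$ forces, in the four families, $r\equiv p^l$, $p^l\equiv 0$, $r+p^l\equiv 0$, or $2r+p^l\equiv 0$ modulo $q-1$. I would rule these out as follows: $r\equiv p^l$ with $0<r,p^l<q$ forces $r=p^l$, impossible since that needs $r_j=0$ for $j\neq l$, contrary to $0<r_j<p-1$; the congruence $p^l\equiv 0$ is excluded by $1\le p^l<q$; for $r+p^l\equiv 0$, since $0<r+p^l<2(q-1)$ this means $r+p^l=q-1$, but its base-$p$ digits are $(r_0,\dots,r_l+1,\dots,r_{f-1})$ with no carry ($r_l\le p-2$) and cannot all be $p-1$ because $r_j<p-1$ --- when $f=1$ this is precisely $r\neq p-2$, the reason for the hypothesis $2<r<p-3$; and for $2r+p^l\equiv 0$, since $0<2r+p^l<3(q-1)$ this would need $2r+p^l\in\{q-1,2(q-1)\}$, both even, whereas $2r$ is even and $p^l$ odd (as $p\ge 3$), so $2r+p^l$ is odd --- a contradiction. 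Hence $[\mathrm{Id},1]$ is, up to scalars, the unique basis vector of weight $\psi$ (when $F=\mathbb{Q}_p$ the basis is just $\{[\mathrm{Id},1],[\beta,1]\}$ and the comparison of the weights $d^r$ and $a^r$ already gives this), so $\pi_r^{I(1),\psi}=\overline{\mathbb{F}}_p v_0$ is one-dimensional, which completes the argument.
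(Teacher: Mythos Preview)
Your overall strategy is exactly what the paper intends: reduce to showing that the $\psi$-isotypic piece of $\pi_r^{I(1)}$ is one-dimensional by running through the basis of Theorem~\ref{thm-main} and comparing $I$-eigenvalues, just as in \cite[Conclusion 3.10]{hen18}. The reduction via the cyclic generator $v_0$ and the injectivity of $\phi\mapsto\phi(v_0)$ is correct.

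There is, however, a computational slip in your eigenvalue for $t_n^s$. Your formula $h\cdot t_n^s=a^{-s}d^{r+s}t_n^s$ is wrong: the extra $w$ in the definition of $t_n^s$ means that after pushing $\mathrm{diag}([a],[d])$ through you pick up $\chi_r(\mathrm{diag}([d],[a]))=a^r$ rather than $d^r$, so the correct formula (this is Lemma~\ref{i action}(2)) is
\[
h\cdot t_n^s = a^r(da^{-1})^s\,t_n^s = a^{r-s}d^{s}\,t_n^s.
\]
Consequently $t_n^{r+p^l}$ has weight $a^{-p^l}d^{r+p^l}$ and $\beta t_n^{r+p^l}$ has weight $a^{r+p^l}d^{-p^l}$, not the weights $a^{-(r+p^l)}d^{2r+p^l}$ and $a^{2r+p^l}d^{-(r+p^l)}$ you list. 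With the correct weights the congruences to rule out are $r\equiv p^l$, $p^l\equiv 0$, and $r+p^l\equiv 0$ modulo $q-1$; the condition $2r+p^l\equiv 0$ never arises. By a fortunate coincidence your list of four congruences already contains these three, and your arguments dispose of them, so the conclusion survives. But you should correct the eigenvalue computation, drop the spurious $2r+p^l\equiv 0$ case, and reassign $p^l\equiv 0$ and $r+p^l\equiv 0$ to the $t$- and $\beta t$-families respectively.
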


The plan of the paper is as follows. We collect many results about the Iwahori-Hecke operators in Section \ref{prelim}. Several of these results are contained in some form in \cite{ab13,ab15}. Theorem \ref{thm-main} and the key ideas in its proof are inspired by the work of Hendel \cite{hen18}, though the Iwahori-Hecke approach which is employed in this paper as in \cite{ab13,ab15} seems to be more amenable to carrying out the necessary calculations. We take up the proof in Section \ref{I1}. 

\section{Two basic results}\label{basic}

As in the work of Hendel \cite{hen18}, we will need to frequently make use of the following two results in our computations. 

The first one is the classical result in modular combinatorics due to Lucas which gives a condition for a binomial coefficient ${n \choose r}$ to be zero modulo $p$. 
\begin{theorem}[Lucas]\label{lucas}
Let $n,r\in \mathbb{N}$ be such that $n=\sum\limits_{\substack{i=0}}^kn_ip^i$ and $r=\sum\limits_{\substack{i=0}}^kr_ip^i,$ where $0\leq n_i\leq p-1$ and $0\leq r_i\leq p-1.$ Then \[{n\choose r}\equiv\prod_{i=0}^k{n_i\choose r_i}\mod p.\] 
\end{theorem}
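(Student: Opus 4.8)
The plan is to prove the congruence by a coefficient comparison in the polynomial ring $\mathbb{F}_p[x]$, which is the classical argument. The starting observation is the ``freshman's dream'': for $0<j<p$ the binomial coefficient ${p\choose j}$ is divisible by $p$ (because $p\mid p!$ but $p\nmid j!\,(p-j)!$), so $(1+x)^p\equiv 1+x^p\pmod p$. Iterating this and using that raising to the $p$-th power is a ring endomorphism of $\mathbb{F}_p[x]$ (Frobenius), an easy induction on $i$ gives $(1+x)^{p^i}\equiv 1+x^{p^i}\pmod p$ for every $i\geq 0$.

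Next I would write $n=\sum_{i=0}^k n_i p^i$ and compute
$$(1+x)^n=\prod_{i=0}^k\left((1+x)^{p^i}\right)^{n_i}\equiv\prod_{i=0}^k\left(1+x^{p^i}\right)^{n_i}\equiv\prod_{i=0}^k\left(\sum_{j=0}^{n_i}{n_i\choose j}x^{jp^i}\right)\pmod p.$$
Expanding the product on the right, a monomial $x^m$ occurs exactly from a tuple of choices $0\leq j_i\leq n_i$ (one for each $i$) with $m=\sum_i j_i p^i$, and the corresponding contribution to its coefficient is $\prod_{i=0}^k{n_i\choose j_i}$.

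Finally I would extract the coefficient of $x^r$ on both sides, where $r=\sum_{i=0}^k r_i p^i$ with $0\leq r_i\leq p-1$. On the left it equals ${n\choose r}$. On the right, since each $j_i$ satisfies $0\leq j_i\leq n_i\leq p-1$, the condition $\sum_i j_i p^i=r$ forces $j_i=r_i$ for all $i$ by the uniqueness of the base-$p$ expansion of $r$; hence the coefficient is $\prod_{i=0}^k{n_i\choose r_i}$, with the usual convention ${n_i\choose r_i}=0$ when $r_i>n_i$ (consistent with the fact that in that case no admissible choice $j_i=r_i$ exists, so the coefficient of $x^r$ is $0$). Comparing the two sides yields ${n\choose r}\equiv\prod_{i=0}^k{n_i\choose r_i}\pmod p$. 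The only step that needs any care is this digit-uniqueness bookkeeping when isolating the coefficient of $x^r$; the rest is routine polynomial algebra over $\mathbb{F}_p$.
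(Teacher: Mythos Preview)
Your argument is the standard generating-function proof of Lucas's theorem and is correct as written; the only point worth noting is that the paper does not actually prove this statement---it simply records it as a classical result in modular combinatorics and then uses it (together with its corollary on when $p\mid{n\choose r}$) throughout the later computations. So there is nothing to compare against: your proof supplies what the paper omits, and the approach you take is exactly the textbook one.
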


\begin{corollary}\label{prime divides bio coeff}
Let $n,r\in \mathbb{N}.$ Then $p$ divides ${n\choose r}$ if and only if $n_i<r_i$ for some $0\leq i\leq k.$
\end{corollary}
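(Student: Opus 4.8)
The plan is to derive this immediately from Lucas's theorem (Theorem \ref{lucas}). Write $n = \sum_{i=0}^k n_i p^i$ and $r = \sum_{i=0}^k r_i p^i$ with $0 \leq n_i, r_i \leq p-1$ (padding with zeros if necessary so both expansions have the same length). By Lucas, we have the congruence ${n \choose r} \equiv \prod_{i=0}^k {n_i \choose r_i} \pmod p$. Since the left side is an integer and the right side is a product of integers, $p$ divides ${n \choose r}$ if and only if $p$ divides the product $\prod_{i=0}^k {n_i \choose r_i}$.

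Since $p$ is prime, $p$ divides the product if and only if $p$ divides at least one factor ${n_i \choose r_i}$. So it suffices to analyze a single factor. For each $i$, we have $0 \leq n_i, r_i \leq p-1$. If $r_i \leq n_i$, then ${n_i \choose r_i}$ is a binomial coefficient with $0 \leq r_i \leq n_i \leq p-1$, hence $1 \leq {n_i \choose r_i} \leq {p-1 \choose \lfloor (p-1)/2 \rfloor} < p$ (more simply, it is a nonzero integer strictly less than $p$ whenever $n_i < p$, as it divides no power of $p$ in that range), so $p \nmid {n_i \choose r_i}$. If $r_i > n_i$, then by the usual convention ${n_i \choose r_i} = 0$, which is divisible by $p$. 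Therefore $p \mid {n_i \choose r_i}$ if and only if $n_i < r_i$.

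Combining the last two paragraphs: $p$ divides ${n \choose r}$ if and only if $p$ divides some ${n_i \choose r_i}$, which happens if and only if $n_i < r_i$ for some $0 \leq i \leq k$. This is exactly the assertion of the corollary.

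The only point requiring a word of care — and it is routine rather than a genuine obstacle — is the claim that ${n_i \choose r_i}$ is not divisible by $p$ when $0 \leq r_i \leq n_i \leq p-1$; this follows because $p$ is prime and none of the integers $1, 2, \dots, n_i$ appearing in the numerator $n_i!/(r_i!(n_i-r_i)!)$ is divisible by $p$, so $p$ cannot divide the (integer) quotient. No step here presents any real difficulty; the corollary is a direct unpacking of Lucas's theorem together with primality of $p$.
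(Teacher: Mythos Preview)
Your proof is correct and follows exactly the approach the paper intends: the corollary is stated immediately after Lucas's theorem with no separate proof, so deriving it by reading off when a factor $\binom{n_i}{r_i}$ vanishes is precisely what is expected. One small slip worth fixing: the inequality $\binom{n_i}{r_i} \le \binom{p-1}{\lfloor (p-1)/2\rfloor} < p$ is false in general (e.g.\ $\binom{4}{2}=6>5$ for $p=5$), so delete that clause and rely solely on the correct argument you give in your final paragraph, namely that $n_i! = n_i(n_i-1)\cdots 1$ has no factor divisible by the prime $p$ when $n_i<p$.
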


The next result gives a formula for adding multiplicative representatives in $\mathcal O$ \rm \cite[Lemma 1.7]{hen18}. As in \cite{hen18}, this formula will play a crucial role in the calculations to follow.
\begin{lemma}\label{witt vectors sum}
Let $x, y \in \mathbb{F}_q$ with $q=p^f$. Then 
\[[x]+[y]\equiv [x+y]+\varpi^e[P_0(x,y)] \mod \varpi^{e+1},\] 
where $P_0(x,y)=\frac{x^{q^e}+y^{q^e}-(x+y)^{q^e}}{\varpi^e}.$
\end{lemma}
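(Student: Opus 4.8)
The plan is to reduce the statement to a single congruence mod $\varpi$ and then to establish it by approximating Teichmüller (multiplicative) representatives by $q^e$-th powers of arbitrary lifts. Throughout, let $v$ be the valuation on $F$ normalised by $v(\varpi)=1$, so that $v(p)=e$ and $p\mathcal{O}=\varpi^e\mathcal{O}$. First I would check that $P_0(x,y)$ is well defined: for \emph{any} lifts $\widetilde{x},\widetilde{y}\in\mathcal{O}$ of $x,y$, writing $q^e=p^{ef}$ and applying Corollary \ref{prime divides bio coeff} to $\binom{q^e}{i}$ for $0<i<q^e$ gives the ``freshman's dream'' $(\widetilde{x}+\widetilde{y})^{q^e}\equiv\widetilde{x}^{\,q^e}+\widetilde{y}^{\,q^e}\pmod{p\mathcal{O}}$, so the numerator $\widetilde{x}^{\,q^e}+\widetilde{y}^{\,q^e}-(\widetilde{x}+\widetilde{y})^{q^e}$ lies in $\varpi^e\mathcal{O}$ and may be divided by $\varpi^e$. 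Since the multiplicative representative map lifts residues, the whole assertion follows once we show that $P_0(x,y)$, computed from \emph{any} lifts, satisfies $P_0(x,y)\equiv\varpi^{-e}\bigl([x]+[y]-[x+y]\bigr)\pmod{\varpi}$; this congruence will in particular show $P_0(x,y)$ is independent of the chosen lifts.

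The crux is an approximation lemma: for any lift $\widetilde{x}\in\mathcal{O}$ of $x\in\mathbb{F}_q$ one has $\widetilde{x}^{\,q^e}\equiv[x]\pmod{\varpi^{e+1}}$. To see it I would first record the elementary fact that $a\equiv b\pmod{\varpi^k}$ with $k\geq 1$ forces $a^q\equiv b^q\pmod{\varpi^{k+1}}$: expand $a^q=(b+\varpi^k u)^q$ by the binomial theorem, noting that the $j=1$ term $q\,b^{q-1}\varpi^k u$ has valuation $\geq e+k$ because $v(q)=e$, the terms with $2\leq j<q$ have valuation $\geq e+2k$ by Corollary \ref{prime divides bio coeff}, and the $j=q$ term has valuation $qk\geq 2k$, so --- as $e,k\geq 1$ --- every one of these exceeds $k$. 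Now start from $\widetilde{x}\equiv[x]\pmod{\varpi}$ and iterate this $e$ times, using $[x]^{q}=[x^q]=[x]$ (valid since $x\in\mathbb{F}_q$) at each stage; this yields $\widetilde{x}^{\,q^e}\equiv[x]^{q^e}=[x]\pmod{\varpi^{e+1}}$, equivalently the familiar statement that $\widetilde{x}^{\,q^n}\to[x]$ with $\widetilde{x}^{\,q^n}\equiv[x]\pmod{\varpi^{n+1}}$ for every $n$.

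To conclude, apply the approximation lemma to $\widetilde{x}$, to $\widetilde{y}$, and to $\widetilde{x}+\widetilde{y}$ (a lift of $x+y$), obtaining
\[\widetilde{x}^{\,q^e}+\widetilde{y}^{\,q^e}-(\widetilde{x}+\widetilde{y})^{q^e}\equiv[x]+[y]-[x+y]\pmod{\varpi^{e+1}}.\]
The left-hand side lies in $\varpi^e\mathcal{O}$ by the first paragraph, hence so does the right-hand side; dividing through by $\varpi^e$ and reducing mod $\varpi$ gives exactly $P_0(x,y)\equiv\varpi^{-e}\bigl([x]+[y]-[x+y]\bigr)\pmod{\varpi}$. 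Applying the multiplicative representative map and multiplying back by $\varpi^e$ then yields $[x]+[y]\equiv[x+y]+\varpi^e[P_0(x,y)]\pmod{\varpi^{e+1}}$, as claimed. (One could alternatively run the computation inside the Witt ring $W(\mathbb{F}_q)\subseteq\mathcal{O}$ of the maximal unramified subfield, where $[x]+[y]-[x+y]$ is visibly $p$ times a Verschiebung.) I expect the only genuine content to be the approximation lemma of the second paragraph --- that $e$ successive $q$-th powers bring an arbitrary lift to within $\varpi^{e+1}$ of its multiplicative representative --- since that is precisely what guarantees that dividing the numerator of $P_0$ by $\varpi^e$ reads off the correct residue; the rest is bookkeeping with valuations.
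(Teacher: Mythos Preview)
Your argument is correct. The paper does not actually supply a proof of this lemma; it is quoted from \cite[Lemma 1.7]{hen18} and used as a black box in the subsequent computations. So there is no ``paper's proof'' to compare against, and your self-contained derivation via the approximation $\widetilde{x}^{\,q^e}\equiv[x]\pmod{\varpi^{e+1}}$ is exactly the standard way to justify such formulas.

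One small slip: you write ``$v(q)=e$'', but in fact $v(q)=v(p^f)=ef$. This does not damage the argument, since $ef\geq e\geq 1$ and all you need for the $j=1$ term is that its valuation exceeds $k$; likewise for $2\leq j<q$ you only use $v_p\!\left(\binom{q}{j}\right)\geq 1$, which is fine. You might also make explicit, for the reader, that the expression $P_0(x,y)$ in the statement is an abuse of notation: $x,y\in\mathbb{F}_q$ satisfy $x^{q^e}=x$, so the formula must be read with lifts in $\mathcal{O}$ and then reduced mod $\varpi$ --- precisely as you do --- and your proof shows this residue is independent of the lifts chosen. With those cosmetic fixes the write-up is complete.
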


\section{Preliminaries on the Iwahori-Hecke operators}\label{prelim}

For $n\in\mathbb{N} \cup \{0\}$ and $\lambda\in I_n$, define
\[g_{n,\lambda}^0=\left(\begin{array}{cc}
\varpi^n & \lambda \\
0 & 1
\end{array}\right)~~\&~~ g_{n,\lambda}^1=\left(\begin{array}{cc}
1             & 0 \\
\varpi\lambda & \varpi^{n+1}
\end{array}\right).\]
We have the relations 
\[g_{0,0}^0={\rm Id},~g_{0,0}^1=\alpha, \beta g_{n,\lambda}^0=g_{n,\lambda}^1w.\]
Now $G$ acts transitively on the Bruhat-Tits tree of ${\rm SL}_2(F)$, whose vertices are in a $G$-equivariant bijection with the cosets $G/{KZ}$ and whose oriented edges are in a $G$-equivariant bijection with the cosets $G/{IZ}$. We have the explicit Cartan decomposition given by
\[G=\underset{\substack{i \in \{0, 1\} \\ n \geq 0,~ \lambda \in I_n}}\coprod g_{n,\lambda}^i KZ \] 
and an explicit set of coset representatives of $G/IZ$ is given by
\begin{equation}\label{edges}
\left\{  g_{n,\lambda}^0, g_{n,\lambda}^0 
\left(\begin{array}{cc}
1& \mu \\
0 & 1
\end{array}\right)w, g_{n,\lambda}^1w, g_{n,\lambda}^1w 
\left(\begin{array}{cc}
1& \mu \\
0 & 1
\end{array}\right)w
\right\}_{n \geq 0, \lambda \in I_n},
\end{equation}
where $\mu\in I_1.$ 

Now we recall a few details about the Iwahori-Hecke algebra \cite[\S 3.2]{bl94}. By definition, this algebra, denoted by $\mathcal{H}(IZ,\chi_r)$, is the endomorphism algebra of the compactly induced representation ${\rm ind}_{IZ}^G \chi_r$. For $n \in \mathbb Z$, let $\phi_{n, n+1}$ denote the convolution map supported on $IZ{\alpha}^{-n}I$ such that $\phi_{n, n+1}(\alpha^{-n})=1$ \cite[Lemma 9]{bl94}. We denote by $T_{n, n+1}$ the corresponding element in $\mathcal{H}(IZ,\chi_r)$. By \cite[Proposition 13]{bl94}, for $0<r<q-1$, we have:
\[\mathcal{H}(IZ,\chi_r) \simeq\frac{\overline{\mathbb{F}}_p[T_{-1,0,},T_{1,2}]}{(T_{-1,0}T_{1,2},T_{1,2}T_{-1,0})}.\]
Substituting $n=1$ in \cite[(16), (17)]{bl94}, we have the following explicit formulas for $T_{-1,0}$ and $T_{1,2}$: 
\begin{equation}\label{formula T_{-1,0}}
T_{-1,0}(\left[g,1\right])=\sum\limits_{\substack{\lambda\in I_1}}\left[gg_{1,\lambda}^0,1\right],
\end{equation}
\begin{equation}\label{formula T_{1,2}}
T_{1,2}(\left[g,1\right])=\sum\limits_{\substack{\lambda\in I_1}}\left[g\beta\left(\begin{array}{cc}
1& \lambda \\
0 & 1
\end{array}\right)w,1\right].
\end{equation} 

The following proposition characterizes the kernel of the Iwahori-Hecke operators $T_{-1,0}$ and $T_{1,2}$ \cite{ab13,ab15}.
\begin{proposition}\label{kernels}
We have:
\begin{enumerate}
\item ${\rm Ker~}T_{-1,0}$ is generated as a $G$-module by the vectors 
\begin{enumerate}
\item $(-1)^{q-1-r}s_0^0+t_1^r$, 
\item $t_1^s$ where $0\leq s \leq r-1$, 
\item $t_1^s$ where $s > r$ and ${q-1-r \choose q-1-s} \equiv 0 \mod p$.
\end{enumerate}
\item ${\rm Ker~}T_{1,2}$ is generated as a $G$-module by the vectors 
\begin{enumerate}
\item $t_0^0+s_1^{q-1-r}$, 
\item $s_1^k$ where $0\leq k \leq q-2-r$, 
\item $s_1^k$ where $k > q-1-r$ and ${r \choose q-1-k} \equiv 0 \mod p$.
\end{enumerate}
\end{enumerate}
\end{proposition}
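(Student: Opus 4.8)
The plan is to prove assertion (2) and to deduce (1) from it by a symmetry of the set-up. Conjugation by $\beta$ normalizes $IZ$; it carries $\chi_r$ to the ``opposite'' regular character of $IZ$ (the one sending a diagonal matrix to the $r$-th power of its top-left entry rather than of its bottom-right entry), and it carries the double coset $IZ\alpha I$ defining $T_{-1,0}$ to the double coset $IZ\alpha^{-1}I$ defining $T_{1,2}$. Hence it induces a $G$-isomorphism between ${\rm ind}_{IZ}^G\chi_r$ and the analogous module for that opposite character, intertwining $T_{-1,0}$ with $T_{1,2}$ and matching the two lists of generating vectors; equivalently, the proof of (1) is obtained from that of (2) by interchanging the two ends of the Bruhat--Tits tree. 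So it suffices to treat (2).

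First one checks that each vector listed in (2) lies in $\Ker\,T_{1,2}$. This is a direct computation from the explicit formula (\ref{formula T_{1,2}}): applying $T_{1,2}$ to $s_1^k=\sum_{\mu_0\in\mathbb F_q}\mu_0^k\,[g_{1,[\mu_0]}^0,1]$ and to $t_0^0$, expanding the matrix products that appear in terms of the coset representatives (\ref{edges}), and using Lemma \ref{witt vectors sum} to combine multiplicative representatives, one reduces to evaluating finite sums of the form $\bigl(\sum_{\mu_0\in\mathbb F_q}\mu_0^{m}\bigr)$ times a binomial coefficient. Since $\sum_{\mu_0\in\mathbb F_q}\mu_0^{m}$ equals $-1$ when $(q-1)\mid m$ with $m>0$ and $0$ otherwise, and since Corollary \ref{prime divides bio coeff} tells us exactly when ${r\choose q-1-k}$ vanishes modulo $p$, the conditions on $k$ in (2)(b)--(c) and the precise shape of the mixed generator (2)(a) are exactly those that make $T_{1,2}$ annihilate the vector in question. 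This step is routine bookkeeping.

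The heart of the proof is the reverse inclusion: if $M$ denotes the $G$-submodule generated by the vectors in (2), then $\Ker\,T_{1,2}\subseteq M$. Here one combines $G$-equivariance with the tree structure. Given $v\in\Ker\,T_{1,2}$, expand it in the basis (\ref{edges}) and assign to it a ``radius'' recording how far ${\rm supp}(v)$ extends from the standard edge. Because $T_{1,2}$ sends an edge to the $q$ edges at one of its two ends, the identity $T_{1,2}(v)=0$ read at the outermost edges becomes, at each outermost branching vertex, a single homogeneous linear relation among the $q$ coefficients on the edges emanating there, whose solution space is precisely the span of the functions $\mu_0\mapsto\mu_0^k$ with $k$ running over the indices in (2)(b)--(c). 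Translating such a vertex to standard position by an element of $G$ and then subtracting a suitable $G$-combination of the generators $s_1^k$ — together with the mixed generator (2)(a), which handles the interaction with the adjacent shell — strictly decreases the radius of $v$. Iterating reduces $v$ modulo $M$ to a vector supported on the edges incident to the standard vertex, i.e. to an element coming from the finite-dimensional representation ${\rm ind}_I^{K}\chi_r\cong{\rm Ind}_B^{{\rm GL}_2(\mathbb F_q)}\chi_r$; a direct check in this principal series (only the levels $n=0,1$ are involved) shows that any residual vector annihilated by $T_{1,2}$ lies in $M$, completing the argument.

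The main obstacle is this shell-by-shell reduction: one must verify that the ``local'' solution space at each branching vertex is independent of the radius and is captured uniformly by $G$-translates of the listed vectors, and one must keep precise track of the correction terms needed to reconcile the constraints propagating inward from one shell to the next. This is exactly where the arithmetic of power sums over $\mathbb F_q$ and Lucas' theorem (Theorem \ref{lucas}) are used in an essential way; the final passage to ${\rm Ind}_B^{{\rm GL}_2(\mathbb F_q)}\chi_r$ is, by comparison, a short finite-dimensional computation.
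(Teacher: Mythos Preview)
Your proposal is correct and follows essentially the same strategy as the paper: reduce to vectors supported on the edges adjacent to the standard edge, then solve the resulting finite linear-algebra problem governed by the vanishing of sums $\sum_{\mu\in\mathbb F_q}\mu^i(\mu-\lambda)^r$ via Lucas' theorem. The paper is terser---it cites \cite[Proposition~3.1 and Lemma~3.2]{ab15} for the reduction step that you spell out as a shell-by-shell radius argument, and it treats (1) by a parallel ``similar'' computation rather than by your $\beta$-conjugation symmetry---but the substance is the same.
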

\begin{proof}
We indicate the proof for Ker $T_{1,2}$, with the other case being similar. An arbitrary vector in ind$_{IZ}^G \chi_r$ is an $\overline{\mathbb F}_p$-linear combination of vectors $[g,1]$,  where $g$ is in the set of coset representatives (\ref{edges}) of $G/IZ$. Arguing as in the proof of \cite[Proposition 3.1]{ab15}, we can restrict our attention to the vectors
\[\left\{[{\rm Id},1],[\beta,1], [g_{1,\mu}^0,1], \left[
\left(\begin{array}{cc}
1& \mu \\
0 & 1
\end{array}\right)w,1\right] \right\}\]
for $\mu \in I_1$. Now the proof boils down to elementary linear algebra as in \cite[Lemma 3.2]{ab15}, where one is led to analyse the indices $i$ for which 
\[\sum_{\mu \in \mathbb F_q} \mu^i(\mu-\lambda)^r = 0,\]
for $\lambda \in \mathbb F_q$. Alternatively, this last step can be deduced directly from the explicit formulas for the Iwahori-Hecke operators in \cite[p. 63-64]{ab13}. 
\end{proof}

\begin{remark}\label{rmk-imker2}
We remarked in (\ref{imker2}) in Section \ref{introduction} that we have strict containments
\begin{equation}
{\rm Im}~ T_{-1,0} \subsetneq {\rm Ker}~ T_{1,2} ~\&~ {\rm Im}~ T_{1,2} \subsetneq {\rm Ker}~ T_{-1,0}.
\end{equation}
when $F$ is not a totally ramified extension of $\mathbb Q_p$. The reason for this is that the third type of vectors in both (1) and (2) in Proposition \ref{kernels} do not belong to the images of the Iwahori-Hecke operators. Note that such vectors do not exist when $f=1$; i.e., when $q=p$. By the argument in \cite[Lemma 3.2]{ab15}, it can be shown that the first two types of vectors are indeed in the image of the relevant Iwahori-Hecke operator.
\end{remark}

\begin{corollary}\label{invariants}
A basis of the space of $I(1)$-invariants of ${\rm Ker~}T_{-1,0}$ is given by $\{t_n^0,\beta t_n^0\}_{n \geq 1}$ and that of ${\rm Ker~}T_{1,2}$ is given by $\{s_n^0,\beta s_n^0\}_{n \geq 1}$. Moreover, the action of $I$ is given by 
\[\left(\begin{array}{cc}
a & b\\
\varpi c & d
\end{array}\right) \cdot v = 
\begin{cases}
a^r v &\text{$v = t_n^0$ or $\beta s_n^0$,} \\
d^r v &\text{$v = s_n^0$ or $\beta t_n^0$.}
\end{cases}
\]
\end{corollary}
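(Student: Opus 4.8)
The plan is to leverage Proposition \ref{kernels} directly. Since $\mathrm{Ker}\,T_{1,2}$ is generated as a $G$-module by the listed vectors, one first identifies which of those generators, and which of their $G$-translates, actually land in $\mathrm{ind}_{IZ}^G\chi_r$ with the right support to contribute an $I(1)$-invariant. Recall from the structure of $\mathrm{ind}_{IZ}^G\chi_r$ that the $I(1)$-invariants of the whole induced module are spanned by sums of the form $\sum_{x\in IZ\backslash G/I(1)} c_x (\text{orbit sum over } I(1)x)$; concretely these are built from the vectors $s_n^0,\beta s_n^0,t_n^0,\beta t_n^0$ (the $k=0$ or $s=0$ specializations of the $s_n^k,t_n^s$ defined in the introduction, since raising $\mu_{n-1}$ to the zeroth power produces exactly the full orbit sum). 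So I would first check that each $s_n^0$ and $\beta s_n^0$ is $I(1)$-fixed — this is immediate because $I(1)$ acts on $[g,1]$ by permuting cosets, and the defining sum in $s_n^0$ is taken over the full set $I_n$, which is an $I(1)$-orbit structure, together with the observation that $\beta$ normalizes $I(1)$ (stated in the excerpt).

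Next I would show these vectors lie in $\mathrm{Ker}\,T_{1,2}$ and span its $I(1)$-invariants. That $s_n^0\in\mathrm{Ker}\,T_{1,2}$ for $n\ge 1$ should follow from Proposition \ref{kernels}(2): $s_1^0$ is literally the generator in case (2b) with $k=0$ (note $0\le 0\le q-2-r$ since $r<q-1$), and $s_n^0$ for $n>1$ is obtained by applying a suitable element of $G$ — essentially $\alpha^{n-1}$ or a product of the $g^0_{m,\lambda}$ — to $s_1^0$, using the explicit coset bookkeeping from \eqref{edges}; one checks the $G$-translate is again a sum over $I_n$ of the relevant matrices. Conversely, to see there are no other $I(1)$-invariants, I would intersect the $I(1)$-invariants of the ambient module with $\mathrm{Ker}\,T_{1,2}$: the $I(1)$-invariants of $\mathrm{ind}_{IZ}^G\chi_r$ decompose according to the edges of the tree, and applying the explicit formula \eqref{formula T_{1,2}} to a general invariant vector forces all coefficients except those of the $s_n^0,\beta s_n^0$ to vanish — this is the same elementary linear-algebra computation (sums $\sum_\mu \mu^i(\mu-\lambda)^r$) referenced in the proof of Proposition \ref{kernels}. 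The case of $\mathrm{Ker}\,T_{-1,0}$ is entirely parallel with $t_n^0,\beta t_n^0$ in place of $s_n^0,\beta s_n^0$, using \eqref{formula T_{-1,0}}.

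Finally, the $I$-action formula is a direct computation. An element $\begin{pmatrix} a & b \\ \varpi c & d\end{pmatrix}\in I$ acts on $[g,1]$ by left translation; writing $g=g^0_{n,\lambda}$ (or $g=\beta g^0_{n,\lambda}$, $g^1_{n,\lambda}w$, etc.) one computes $\begin{pmatrix} a & b \\ \varpi c & d\end{pmatrix}g$ and rewrites it in the form $g\cdot(\text{element of }IZ)\cdot(\text{element permuting the summation index})$; the $IZ$-part contributes the character $\chi_r$ evaluated on its lower-right entry modulo $\varpi$, which is $a^r$ or $d^r$ depending on whether conjugation by $g$ swaps the diagonal entries — i.e. whether $g$ involves the Weyl element $w$. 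Since $s_n^0$ and $\beta t_n^0$ are built from matrices in the "unswapped" Borel position while $t_n^0$ and $\beta s_n^0$ involve $w$, one gets exactly the stated dichotomy. The main obstacle is the bookkeeping in the spanning/linear-independence step: one must be careful that the $G$-translates generating $\mathrm{Ker}\,T_{1,2}$ from its finitely many generators, when intersected with the $I(1)$-fixed part, produce precisely $\{s_n^0,\beta s_n^0\}_{n\ge 1}$ and nothing more — in particular that the third type of generator in Proposition \ref{kernels}, which does not lie in the image of the other operator (Remark \ref{rmk-imker2}), contributes no new $I(1)$-invariants beyond those already counted. This is where invoking the explicit Iwahori-Hecke formulas from \cite{ab13} keeps the argument clean.
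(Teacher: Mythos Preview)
Your proposal is correct and follows essentially the same route as the paper: identify the $I(1)$-invariants of the full induced representation as $\langle s_n^0,t_n^0,\beta s_n^0,\beta t_n^0\rangle_{n\ge 0}$, then intersect with the kernels using the explicit Hecke formulas; for the $I$-action, the paper streamlines your computation by first observing that $I/I(1)$ is the diagonal torus, so it suffices to act by $\mathrm{diag}(a,d)$, which immediately gives $d^r(da^{-1})^k s_n^k$ and $a^r(da^{-1})^s t_n^s$.
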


\begin{proof}
The first part of Proposition \ref{kernels} together with the observation that the space of $I(1)$-invariants of the full induced representation is given by
\[\left( {\rm ind}_{IZ}^G \chi_r \right)^{I(1)} = \langle s_n^0,t_n^0,\beta s_n^0,\beta t_n^0\rangle_{n\geq 0}.\]
For the second part, observe that since 
\[I/{I(1)}=\left\{\left(\begin{array}{cc}
a & 0\\
0 & d
\end{array}\right)\mid a,d\in{\mathbb{F}_q}^{\times}\right\},\]  
it follows that 
\[\left(\begin{array}{cc}
a & b\\
\varpi c & d
\end{array}\right) ~\&~
\left(\begin{array}{cc}
a & 0\\
0 & d
\end{array}\right)
\]
have the same action on any $I(1)$-invariant vector. Now, for any $k \geq 0$, we have
\begin{align*}
\left(\begin{array}{cc}
a & 0\\
0 & d
\end{array}\right)s_n^k &=\left(\begin{array}{cc}
a & 0\\
0 & d
\end{array}\right)\sum\limits_{\substack{\mu\in I_n}}\mu_{n-1}^k\left[\left(\begin{array}{cc}
\varpi^n & \mu \\
0    &  1
\end{array}\right),1\right]\\
&=\sum\limits_{\substack{\mu\in I_n}}\mu_{n-1}^k
\left[\left(\begin{array}{cc}
\varpi^n & ad^{-1}\mu\\
0  &  1
\end{array}\right)\left(\begin{array}{cc}
a & 0\\
0 & d
\end{array}\right),1\right]\\
&= d^r(da^{-1})^k s_n^k.   
\end{align*}
A similar computation gives  
\begin{align*}
\left(\begin{array}{cc}
a & 0\\
0 & d
\end{array}\right)t_n^s &=a^r(da^{-1})^st_n^s.
\end{align*}
Similarly, we can check the action on $\beta s_n^k$ and $\beta t_n^k$.
\end{proof}

Next, we recall \cite[Proposition 3.3]{ab15}, whose proof in [loc. cit.] is valid for any $q$.
\begin{proposition}\label{intersection}
We have
\[\Ker~T_{-1,0} \cap \Ker~T_{1,2}=\{0\}.\]
\end{proposition}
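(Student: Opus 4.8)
The plan is to show directly that the intersection $\Ker~T_{-1,0} \cap \Ker~T_{1,2}$ is trivial by combining the explicit generator descriptions from Proposition \ref{kernels} with a careful look at how the two Hecke operators move support along the Bruhat-Tits tree. First I would recall that $\Ker~T_{-1,0}$ is generated as a $G$-module by the three families of vectors in part (1) of Proposition \ref{kernels}, all of which are built out of the $t_1^s$ (together with $s_0^0$), and that $\Ker~T_{1,2}$ is generated by the three families in part (2), all built out of the $s_1^k$ (together with $t_0^0$). The key structural fact is the behaviour of $T_{-1,0}$ and $T_{1,2}$ on support: using formulas (\ref{formula T_{-1,0}}) and (\ref{formula T_{1,2}}), $T_{-1,0}$ pushes an edge one step ``away from'' the base vertex in one orientation class while $T_{1,2}$ pushes it one step in the opposite class, so the two operators have disjoint ranges of support-displacement. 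An element lying in both kernels is therefore killed by an operator that strictly moves its support and also by the operator that strictly moves it the other way; I would exploit this to force the element to be supported only on a bounded set of edges near the base vertex.

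Concretely, I would take an arbitrary $v \in \Ker~T_{-1,0} \cap \Ker~T_{1,2}$, write it as a finite linear combination of the coset-representative vectors from (\ref{edges}), and let $n$ be the largest ``radius'' occurring in its support. If $n$ is large, then applying $T_{1,2}$ (or $T_{-1,0}$, depending on the orientation of the outermost edges) produces contributions supported at radius $n+1$ which cannot cancel against anything else, because every other term in $T_{1,2}(v)$ has radius at most $n+1$ and the radius-$(n+1)$ part comes only from the radius-$n$ part of $v$ via an injective-on-support map. This forces the radius-$n$ part of $v$ to vanish, contradicting maximality of $n$ unless $v$ is concentrated at small radius. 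Then the problem reduces to a finite-dimensional linear algebra computation: $v$ is a combination of the vectors $[{\rm Id},1]$, $[\beta,1]$, and the $t_1^s$, $s_1^k$ (equivalently the vectors appearing in Proposition \ref{kernels}), and one checks directly, using the explicit action formulas of Corollary \ref{invariants} and the index conditions in Proposition \ref{kernels}, that no nonzero combination lies simultaneously in both kernels. Here one uses that the generating relations of $\Ker~T_{-1,0}$ involve the $t$-vectors while those of $\Ker~T_{1,2}$ involve the $s$-vectors, and the only ``mixed'' generators are $(-1)^{q-1-r}s_0^0 + t_1^r$ and $t_0^0 + s_1^{q-1-r}$, which a short computation shows are linearly independent modulo the rest.

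I expect the main obstacle to be making the ``support-displacement'' argument fully rigorous: one must be careful that the coset representatives in (\ref{edges}) are genuinely a transversal and that the non-cancellation at the outermost radius really holds, since $T_{1,2}$ and $T_{-1,0}$ each spread a single edge into $q$ edges and there could a priori be cancellation among those $q$ terms or against lower-radius terms. Controlling this amounts to reproving, in the present setting, the kind of leading-term analysis already carried out in \cite{ab15} for the individual kernels, so I would lean on the formalism of \cite[Proposition 3.1]{ab15} and its proof. The reduction to finite radius is the conceptual heart; once that is in place, the remaining step is a bounded, if slightly tedious, verification using Lucas' theorem (Theorem \ref{lucas}) to handle the congruence conditions on the indices $s$ and $k$.
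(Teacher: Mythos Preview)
The paper does not actually give a proof of this proposition: it simply recalls \cite[Proposition 3.3]{ab15} and notes that the argument there remains valid for any $q$. Your proposal, despite its length, ultimately does the same thing---you identify the support-displacement analysis as the ``conceptual heart'' and then explicitly say you would ``lean on the formalism of \cite[Proposition 3.1]{ab15} and its proof'' to make it rigorous. So in substance both you and the paper are deferring to \cite{ab15}.

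That said, your sketch of \emph{what} the cited argument should look like is reasonable and almost certainly in line with the actual proof in \cite{ab15}: one tracks the maximal radius of the support of a putative $v$ in the intersection, observes that one of the two operators must push the outermost edges strictly outward (with no possible cancellation at that new radius), and concludes that the outermost part vanishes, forcing $v$ down to a small ball where a finite check finishes things. The one place where your outline is a bit glib is the claim that the radius-$(n+1)$ contribution comes ``via an injective-on-support map'': each outermost edge spreads into $q$ edges, and you need that the resulting $q$-sums at radius $n+1$ coming from distinct radius-$n$ edges are supported on disjoint sets of cosets (this is true, by the tree geometry, but it is exactly the kind of bookkeeping that \cite{ab15} carries out and that you are outsourcing). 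The finite-dimensional endgame you describe is also not quite as you phrase it---the intersection need not be spanned by $I(1)$-invariants a priori, so invoking Corollary \ref{invariants} is not directly relevant; one really just computes $T_{-1,0}$ and $T_{1,2}$ on the small-radius basis vectors directly.

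In short: your approach is the same as the paper's (cite \cite{ab15}), and your surrounding narrative is a plausible reconstruction of that proof, modulo the caveats above.
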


As a corollary to Proposition \ref{intersection}, we have the following lemma.
\begin{lemma}\label{kernels sum}
For the iwahori-Hecke operators $T_{-1,0}$ and $T_{1,2}$, we have
\[{\rm ind}_{IZ}^G \chi_r \neq \Ker~T_{-1,0}\oplus\Ker~T_{1,2}.\] 
\end{lemma}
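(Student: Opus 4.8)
The plan is to deduce the lemma from the two structural results already in hand, namely Proposition \ref{intersection} and Corollary \ref{invariants}, using the left-exactness of the formation of $I(1)$-invariants. By Proposition \ref{intersection} we have $\Ker~T_{-1,0}\cap\Ker~T_{1,2}=\{0\}$, so $N:=\Ker~T_{-1,0}+\Ker~T_{1,2}$ is an \emph{internal direct sum} of $G$-submodules of ${\rm ind}_{IZ}^G\chi_r$, and the assertion of the lemma is exactly that $N$ is a \emph{proper} submodule. I would prove this by exhibiting a single vector of ${\rm ind}_{IZ}^G\chi_r$ that does not lie in $N$; the natural candidate is $[{\rm Id},1]=s_0^0$.

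The key observation is that, since $N=\Ker~T_{-1,0}\oplus\Ker~T_{1,2}$ is a direct sum of $I(1)$-stable subspaces, any $I(1)$-invariant vector of $N$ has $I(1)$-invariant components: if $x\in N^{I(1)}$ is written uniquely as $x=u+v$ with $u\in\Ker~T_{-1,0}$ and $v\in\Ker~T_{1,2}$, then applying $h\in I(1)$ and comparing the two decompositions of $hx=x$ forces $hu=u$ and $hv=v$. Hence
\[N^{I(1)}=\left(\Ker~T_{-1,0}\right)^{I(1)}\oplus\left(\Ker~T_{1,2}\right)^{I(1)},\]
which by Corollary \ref{invariants} is the span of the vectors $s_n^0,t_n^0,\beta s_n^0,\beta t_n^0$ with $n\geq 1$. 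On the other hand, as recalled in the proof of Corollary \ref{invariants}, the vectors $s_n^0,t_n^0,\beta s_n^0,\beta t_n^0$ with $n\geq 0$ form a basis of $\left({\rm ind}_{IZ}^G\chi_r\right)^{I(1)}$; in particular $[{\rm Id},1]=s_0^0$ is one of these basis vectors and therefore is not in the proper subspace $N^{I(1)}$ spanned by the remaining ones with $n\geq 1$. Since $[{\rm Id},1]$ is $I(1)$-invariant but does not lie in $N^{I(1)}$, it does not lie in $N$, so $N\subsetneq{\rm ind}_{IZ}^G\chi_r$, which is the claim.

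This argument is short, and I do not anticipate a genuine obstacle. The only point requiring care is that the formation of $I(1)$-invariants is compatible with the decomposition of $N$ as a direct sum — in general $(A+B)^{I(1)}$ can be strictly larger than $A^{I(1)}+B^{I(1)}$ — and this is precisely the place where the triviality of $\Ker~T_{-1,0}\cap\Ker~T_{1,2}$ (Proposition \ref{intersection}) is used. One should also double-check the harmless index bookkeeping, i.e. that $s_0^0=[{\rm Id},1]$ is indeed absent from the $n\geq 1$ part of the basis, which is immediate from the explicit basis recorded in Corollary \ref{invariants}.
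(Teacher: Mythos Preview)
Your proof is correct and follows essentially the same approach as the paper: assume $[{\rm Id},1]$ decomposes as $v_1+v_2$ with $v_i$ in the two kernels, use Proposition~\ref{intersection} to force $v_1,v_2$ to be $I(1)$-invariant, and then invoke Corollary~\ref{invariants} together with the explicit basis of $\bigl({\rm ind}_{IZ}^G\chi_r\bigr)^{I(1)}$ to reach a contradiction. The only minor difference is that the paper further uses the $I$-eigenvalue $d^r$ to pin $v_1$ to $\langle\beta t_n^0\rangle_{n\ge1}$ and $v_2$ to $\langle s_n^0\rangle_{n\ge1}$, whereas you stop at $I(1)$-invariance; your version is slightly more economical since the eigenvalue refinement is not needed for the contradiction.
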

\begin{proof}
If possible, let
\[{\rm ind}_{IZ}^G \chi_r=\Ker~T_{-1,0}\oplus\Ker~T_{1,2}.\] 
Then we get
$[{\rm Id},1]=v_1+v_2$ for some $v_1\in\Ker~T_{-1,0}$ and $v_2\in\Ker~T_{1,2}$.
Then, for an element $g \in I$, 
\[ g(v_1+v_2)= \left(\begin{array}{cc}
a & b\\
\varpi c & d
\end{array}\right) (v_1+v_2) = d^r[{\rm Id},1]=d^r(v_1+v_2)\] 
and this implies
\[gv_1-d^rv_1=-gv_2+d^rv_2 = 0,\]
by Proposition \ref{intersection}. In particular, both $v_1$ and $v_2$ are $I(1)$-invariant. 
By Corollary \ref{invariants}, $v_1$ is a linear combination of vectors of the form $\{\beta t_n^0\}_{n\geq 1}$ and $v_2$ is a linear combination of vectors of the form $\{s_n^0\}_{n\geq 1}$. But $[{\rm Id},1]$ cannot be written as a linear combination of these types of vectors. 
\end{proof}

We end this section with two more results which immediately follow from considerations similar to Proposition \ref{kernels}. We state these in a ready to use format here (see also \cite[p. 63-64]{ab13}).

\begin{lemma}\label{kernel condition}
Let $0\leq i_j\leq q-1$ for $0\leq j\leq n-1$ and $\mu=[\mu_0]+[\mu_1]\varpi+\dots+[\mu_{n-1}]\varpi^{n-1} \in I_n.$ Write $i_{n-1}=i_{n-1,0}+i_{n-1,1}p+\dots+i_{n-1,f-1}p^{f-1}.$ Then
\begin{itemize}
\item[{\rm(1)}] $\sum\limits_{\substack{\mu_0}}\dots\sum\limits_{\substack{\mu_{n-1}}}\mu_0^{i_0}\dots\mu_{n-1}^{i_{n-1}}\left[g^0_{n,\mu}, 1\right]\in \Ker~T_{1,2}$ if and only if $0\leq i_{n-1}\leq q-2-r$ or $i_{n-1}>q-1-r$ such that $i_{n-1,j}<p-1-r_j$ for some $0\leq j\leq f-2$. 
\item[{\rm(2)}] $\sum\limits_{\substack{\mu_0}}\dots\sum\limits_{\substack{\mu_{n-1}}}\mu_0^{i_0}\dots\mu_{n-1}^{i_{n-1}}\left[g^0_{n-1,[\mu]_{n-1}}\left(\begin{array}{cc}
1& [\mu_{n-1}]\\
0& 1
\end{array}\right)w,1\right]\in \Ker~T_{-1,0}$ if and only if 
$0\leq i_{n-1}\leq r-1$ or $i_{n-1}>r$ such that $i_{n-1,j}<r_j$ for some $0\leq j\leq f-2$. 
\end{itemize}
\end{lemma}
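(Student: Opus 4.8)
The plan is to imitate, at the $n$-th level, the proof of Proposition~\ref{kernels}, which already carries out the computation at the first level; indeed for $n=1$ the two assertions are exactly the part of Proposition~\ref{kernels} that identifies which $s_1^k$, respectively which $t_1^s$, lie in the relevant kernel, and one can alternatively read both statements off the explicit tables for the Iwahori--Hecke operators in \cite[p.~63--64]{ab13}. I will give the argument for (1); that for (2) is entirely parallel, with $T_{1,2}$ replaced by $T_{-1,0}$ and the roles of $r$ and $q-1-r$ interchanged.

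Put $v=\sum_{\mu\in I_n}\mu_0^{i_0}\cdots\mu_{n-1}^{i_{n-1}}[g_{n,\mu}^0,1]$. Using $\beta\begin{pmatrix}1&\lambda\\0&1\end{pmatrix}w=g_{0,\lambda}^1$ and (\ref{formula T_{1,2}}), $T_{1,2}(v)=\sum_{\mu\in I_n}\mu_0^{i_0}\cdots\mu_{n-1}^{i_{n-1}}\sum_{\lambda\in I_1}[g_{n,\mu}^0 g_{0,\lambda}^1,1]$. The first step is to rewrite each product $g_{n,\mu}^0 g_{0,\lambda}^1$ through the standard coset representatives (\ref{edges}) of $G/IZ$: starting from the factorisation $g_{n,\mu}^0=g_{n-1,[\mu]_{n-1}}^0\begin{pmatrix}\varpi&[\mu_{n-1}]\\0&1\end{pmatrix}$, a direct matrix computation — with a single Bruhat-type manipulation when $\lambda\neq0$ — gives $[g_{n,\mu}^0 g_{0,\lambda}^1,1]=[g_{n-1,[\mu]_{n-1}}^0,1]$ if $\lambda=0$, and $[g_{n,\mu}^0 g_{0,\lambda}^1,1]=(-\lambda_0^{-1})^{r}\,\bigl[g_{n-1,[\mu]_{n-1}}^0\begin{pmatrix}1&[\lambda_0^{-1}+\mu_{n-1}]\\0&1\end{pmatrix}w,1\bigr]$ if $\lambda=[\lambda_0]$ with $\lambda_0\neq0$; here the scalar is the value of $\chi_r$ on the element of $IZ$ absorbed in the reduction, and only the class of $\lambda_0^{-1}+\mu_{n-1}$ modulo $\varpi$ enters, the remainder being absorbed as well. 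The representatives that occur — the $g_{n-1,\nu}^0$ and the $g_{n-1,\nu}^0\begin{pmatrix}1&[\eta]\\0&1\end{pmatrix}w$ for $\nu\in I_{n-1}$, $\eta\in\mathbb F_q$ — are distinct members of (\ref{edges}), so $T_{1,2}(v)=0$ if and only if the coefficient of each of them vanishes.

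Collecting coefficients, the $\lambda=0$ part contributes, for each $\nu$, the scalar $\nu_0^{i_0}\cdots\nu_{n-2}^{i_{n-2}}\sum_{x\in\mathbb F_q}x^{i_{n-1}}$, which vanishes for all $\nu$ exactly when $i_{n-1}\neq q-1$. The $\lambda\neq0$ part contributes, for each $\nu$ and $\eta$, after writing $\lambda_0=(\eta-x)^{-1}$ with $x=\mu_{n-1}$ (so $(-\lambda_0^{-1})^r=(x-\eta)^r$) and summing over $x$ (the term $x=\eta$ being zero, as $r\geq1$), the scalar $(-1)^r\nu_0^{i_0}\cdots\nu_{n-2}^{i_{n-2}}\sum_{x\in\mathbb F_q}x^{i_{n-1}}(\eta-x)^{r}$. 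Expanding $(\eta-x)^r$ and using that $\sum_{x\in\mathbb F_q}x^{j}$ equals $-1$ when $(q-1)\mid j$ and $j>0$ and equals $0$ otherwise, only the degree-$(q-1)$ monomial in $x$ can survive, and one reads off that this sum vanishes for all $\eta$ precisely when $i_{n-1}\leq q-2-r$, or $i_{n-1}>q-1-r$ and $\binom{r}{q-1-i_{n-1}}\equiv0\bmod p$. Since $0\leq i_{n-1,j}\leq p-1$, there is no borrowing in the difference $q-1-i_{n-1}$, so its $p$-adic digits are $p-1-i_{n-1,j}$; hence by Theorem~\ref{lucas} and Corollary~\ref{prime divides bio coeff} the last condition says $i_{n-1,j}<p-1-r_j$ for some $j$. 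For $i_{n-1}>q-1-r$ the case $j=f-1$ is vacuous (it would force $i_{n-1}\leq q-2-r$), so one may restrict to $0\leq j\leq f-2$; and the constraint $i_{n-1}\neq q-1$ from the $\lambda=0$ part is then automatic, as all $p$-adic digits of $q-1$ equal $p-1$. This is exactly (1). For (2) the same computation using (\ref{formula T_{-1,0}}) sends the $\lambda=0$ part to $g_{n-2,[\mu]_{n-2}}^0\begin{pmatrix}1&[\mu_{n-2}]\\0&1\end{pmatrix}w$ (and, when $n=1$, to $[\beta,1]$) and the $\lambda\neq0$ part to a $\lambda_0^{\,r}$-multiple of $g_{n,\rho}^0$, where $\rho\in I_n$ is $\mu$ with its top digit $\mu_{n-1}$ replaced by $\mu_{n-1}+\lambda_0^{-1}$; writing $\lambda_0=(\eta-x)^{-1}$ and using $y^{-r}=y^{q-1-r}$ in $\mathbb F_q^\times$, the governing sum becomes $\sum_{x\in\mathbb F_q}x^{i_{n-1}}(\eta-x)^{q-1-r}$, and one obtains (2) from (1) by interchanging $r$ and $q-1-r$.

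The only real work will be the matrix bookkeeping of the first step — writing each $g_{n,\mu}^0 g_{0,\lambda}^1$, respectively each $g_{n-1,[\mu]_{n-1}}^0\begin{pmatrix}1&[\mu_{n-1}]\\0&1\end{pmatrix}w\,g_{1,\lambda}^0$, as a member of (\ref{edges}) times an element of $IZ$ — together with keeping exact track of the power of $\lambda_0$ contributed by $\chi_r$ (the sign of that exponent is precisely what produces $r$ in one part and $q-1-r$ in the other) and of the new top digit $\lambda_0^{-1}+\mu_{n-1}$. Once this is settled, the rest is the same elementary mixture of the evaluation of $\sum_{x\in\mathbb F_q}x^j$ and Lucas' theorem that underlies Proposition~\ref{kernels}.
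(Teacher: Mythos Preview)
Your proposal is correct and follows precisely the approach the paper indicates: the paper gives no detailed proof here, merely stating that the lemma ``immediately follow[s] from considerations similar to Proposition~\ref{kernels}'' and pointing to the explicit tables in \cite[p.~63--64]{ab13}, and your argument is exactly that --- applying $T_{1,2}$ (resp.\ $T_{-1,0}$) via (\ref{formula T_{1,2}}), rewriting through the coset representatives (\ref{edges}), and reducing to the vanishing of the same sums $\sum_{x\in\mathbb F_q} x^{i_{n-1}}(\eta-x)^r$ that drive the proof of Proposition~\ref{kernels}, then invoking Lucas. Your bookkeeping of the $\chi_r$-scalar $(-\lambda_0^{-1})^r$ and the new top digit $\mu_{n-1}+\lambda_0^{-1}$ is accurate, and your observation that the $\lambda=0$ constraint $i_{n-1}\neq q-1$ is subsumed by the $\lambda\neq 0$ condition (since $\binom{r}{0}=1$) and that the case $j=f-1$ is vacuous when $i_{n-1}>q-1-r$ matches Remark~\ref{rmk-f-1}.
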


\begin{remark}\label{rmk-f-1}
Note that in Lemma \ref{kernel condition}, the range for $j$ is $0 \leq j \leq f-2$ because 
\[i_{n-1} > q-1-r \implies i_{n-1,f-1} \geq p-1-r_{f-1}.\]
\end{remark}

\begin{remark}
Note that the condition
 \[i_{n-1}>q-1-r ~\&~  i_{n-1,j}<p-1-r_j \mbox{~for~some~} 0\leq j\leq f-2\]
 in Lemma \ref{kernel condition} (1) is precisely what gives, by Theorem \ref{lucas},
 \[{r \choose q-1-i_{n-1}} \equiv 0 \mod p\]
 which is related to the condition in (c) of Proposition \ref{kernels} (2). Similarly, the condition
 \[i_{n-1}>r ~\&~ i_{n-1,j}<r_j \mbox{~for~some~} 0\leq j\leq f-2\]
in Lemma \ref{kernel condition} (2) is related to (c) of Proposition \ref{kernels} (1).
\end{remark}

The following lemma is \cite[Lemma 3.1]{ab13}. We note that its proof in [loc. cit.] is valid for any $q$.

\begin{lemma}\label{modulo kk}
Let $\mu = [\mu_0]+\dots+[\mu_{n-1}]\varpi^{n-1} \in I_n.$ Then modulo $(\Ker~ T_{-1,0},\Ker~T_{1,2})$, we have the identities
\begin{enumerate}
\item[{\rm(1)}]$\sum\limits_{\substack{\mu_{n-1}\in I_1}}\mu_{n-1}^{q-1-r}\left[g^0_{n,\mu},1\right]=
-\left[g^0_{n-2,[\mu]_{n-2}}\left(\begin{array}{cc}
1& [\mu_{n-2}]\\
0& 1
\end{array}\right)w,1\right],$
\item[{\rm(2)}]$\sum\limits_{\substack{\mu_{n-1}\in I_1}}\mu_{n-1}^r\left[g^0_{n-1,[\mu]_{n-1}}\left(\begin{array}{cc}1& [\mu_{n-1}]\\
0& 1
\end{array}\right)w,1\right]=(-1)^{r-1}\left[g^0_{n-1,[\mu]_{n-1}},1\right].$
\end{enumerate}
\end{lemma}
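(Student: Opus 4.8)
The plan is to deduce both identities directly from the explicit $G$-module generators of $\Ker~T_{-1,0}$ and $\Ker~T_{1,2}$ recorded in Proposition~\ref{kernels}: I would recognize each left-hand side as the image, under a single suitably chosen element of $G$, of one of those generators, and then conclude using that both kernels are $G$-submodules of ${\rm ind}_{IZ}^G\chi_r$ (being kernels of elements of its endomorphism algebra). Throughout I would use that the $G$-action on ${\rm ind}_{IZ}^G\chi_r$ satisfies $h\cdot[g,1]=[hg,1]$, that the central matrix ${\rm diag}(\varpi,\varpi)$ acts trivially so that $[\varpi h,1]=[h,1]$, and the boundary conventions $s_0^0=[{\rm Id},1]$ and $t_0^0=[\beta,1]$, under which the generators Proposition~\ref{kernels}(1)(a) and~(2)(a) read $(-1)^{q-1-r}[{\rm Id},1]+t_1^r\in\Ker~T_{-1,0}$ and $[\beta,1]+s_1^{q-1-r}\in\Ker~T_{1,2}$.

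First I would record two elementary $2\times 2$ matrix identities. With $\mu=[\mu]_{n-1}+[\mu_{n-1}]\varpi^{n-1}\in I_n$, the relations $\varpi^{n-1}[\mu_{n-1}]+[\mu]_{n-1}=\mu$ and $\varpi^{n-2}[\mu_{n-2}]+[\mu]_{n-2}=[\mu]_{n-1}$ give
\[
g^0_{n,\mu}=g^0_{n-1,[\mu]_{n-1}}\,g^0_{1,[\mu_{n-1}]}
\qquad\text{and}\qquad
g^0_{n-1,[\mu]_{n-1}}\,\beta=\varpi\cdot g^0_{n-2,[\mu]_{n-2}}\left(\begin{smallmatrix}1&[\mu_{n-2}]\\0&1\end{smallmatrix}\right)w,
\]
the $\varpi$ on the right being the central scalar matrix. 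Combined with $[\varpi h,1]=[h,1]$, the second identity yields $[g^0_{n-1,[\mu]_{n-1}}\beta,1]=[g^0_{n-2,[\mu]_{n-2}}\left(\begin{smallmatrix}1&[\mu_{n-2}]\\0&1\end{smallmatrix}\right)w,1]$.

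Now for identity~(2): fixing $\mu_0,\dots,\mu_{n-2}$ and letting $\mu_{n-1}$ vary over $\mathbb{F}_q$, the formula $t_1^r=\sum_{\nu\in\mathbb{F}_q}\nu^r[\left(\begin{smallmatrix}1&[\nu]\\0&1\end{smallmatrix}\right)w,1]$ together with $h\cdot[g,1]=[hg,1]$ identifies the left-hand side of~(2) with $g^0_{n-1,[\mu]_{n-1}}\cdot t_1^r$. By Proposition~\ref{kernels}(1)(a), modulo $\Ker~T_{-1,0}$ we have $t_1^r\equiv-(-1)^{q-1-r}[{\rm Id},1]=(-1)^{r-1}[{\rm Id},1]$, using $-(-1)^{q-1-r}=(-1)^{r-1}$ (true since $q$ is odd; if $p=2$ all signs are trivial). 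Applying the $G$-element $g^0_{n-1,[\mu]_{n-1}}$ and using $G$-stability of $\Ker~T_{-1,0}$ gives $g^0_{n-1,[\mu]_{n-1}}\cdot t_1^r\equiv(-1)^{r-1}[g^0_{n-1,[\mu]_{n-1}},1]$, the asserted right-hand side. For identity~(1): by the first matrix identity and $s_1^{q-1-r}=\sum_{\nu\in\mathbb{F}_q}\nu^{q-1-r}[g^0_{1,[\nu]},1]$, the left-hand side of~(1) equals $g^0_{n-1,[\mu]_{n-1}}\cdot s_1^{q-1-r}$; by Proposition~\ref{kernels}(2)(a), $s_1^{q-1-r}\equiv-[\beta,1]$ modulo $\Ker~T_{1,2}$; applying $g^0_{n-1,[\mu]_{n-1}}$, using $G$-stability of $\Ker~T_{1,2}$, and invoking the consequence of the second matrix identity recorded above, the left-hand side of~(1) is $\equiv-[g^0_{n-2,[\mu]_{n-2}}\left(\begin{smallmatrix}1&[\mu_{n-2}]\\0&1\end{smallmatrix}\right)w,1]$, as claimed. (Here $n\geq 2$ in~(1) and $n\geq 1$ in~(2) so that every term makes sense; note each identity is in fact proved modulo a single kernel, which a fortiori gives the statement modulo $(\Ker~T_{-1,0},\Ker~T_{1,2})$.)

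I do not expect a genuine obstacle here; the only thing requiring care is bookkeeping — fixing the $G$-action convention $h\cdot[g,1]=[hg,1]$, correctly tracking the central scalar in the second matrix identity, and pinning down the boundary vectors $s_0^0$ and $t_0^0$ so that the generators of the two kernels in Proposition~\ref{kernels} take the explicit shape used above. Once this is in place, each identity follows in essentially one line from the $G$-invariance of the relevant kernel.
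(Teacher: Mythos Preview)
Your proof is correct and follows essentially the same route the paper points to: the paper does not give an independent argument but cites \cite[Lemma 3.1]{ab13} (and the identities (4), (5) there), whose content is exactly the translation by $g^0_{n-1,[\mu]_{n-1}}$ of the kernel generators in Proposition~\ref{kernels}(1)(a) and (2)(a). Your verification of the two matrix identities, the identification $s_0^0=[{\rm Id},1]$, $t_0^0=[\beta,1]$, and the observation that (1) holds already modulo $\Ker~T_{1,2}$ and (2) modulo $\Ker~T_{-1,0}$ match the paper's Remark following the lemma; there is nothing missing.
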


\begin{remark}
In fact, (1) is true modulo $\Ker~T_{1,2}$ and (2) is true modulo $\Ker~T_{-1,0}$ (cf. \cite[(4) \& (5) on p. 62]{ab13}).
\end{remark}

\section{Proof of Theorem \ref{thm-main}}\label{I1}

In this section we take up the proof of Theorem \ref{thm-main}. As mentioned in Section \ref{introduction}, several of the ideas of the proof here are already there in \cite{hen18}.

\subsection{A set of $I(1)$-invariants}

First we make the following observation \cite[\S 2.1]{hen18}. For $a, b, c \in \mathcal{O}$, any matrix in $I(1)$ can be written as
\[\left(\begin{array}{cc}
1+\varpi a & b \\
\varpi c   & 1+\varpi d
\end{array}\right)=\left(\begin{array}{cc}
1 & (1+\varpi d)^{-1}b \\
0 & 1
\end{array}\right)\left(\begin{array}{cc}
1 & 0 \\
\varpi c t^{-1} & 1
\end{array}\right)\left(\begin{array}{cc}
t & 0 \\
0 & 1+\varpi d
\end{array}\right),\]
where $t=1+\varpi(a-bc(1+\varpi d)^{-1}).$ Hence to prove that a certain vector is $I(1)$-invariant modulo $(\Ker~T_{-1,0}, \Ker~T_{1,2})$, it is enough to check for invariance under
\[\left(\begin{array}{cc}
1 & b\\
0 & 1
\end{array}\right), \left(\begin{array}{cc}
1     & 0\\
\varpi c &  1
\end{array}\right), \left(\begin{array}{cc}
1+\varpi a & 0\\
0       &  1
\end{array}\right),\] where $a,b,c\in\mathcal{O}.$

We first prove that the set of vectors $\mathcal S_2$ and $\mathcal T_2$ are $I(1)$-invariants when considered as vectors in $\pi_r$; i.e., when we consider the images of these vectors modulo $\Ker~T_{-1,0}\oplus \Ker~T_{1,2}$. The first step in achieving this is an inductive argument which reduces the general case to the case $n=2$.

\begin{lemma}\label{enough to look at ball one for kk}
If $s_{n-1}^k$ (resp. $t_{n-1}^s$) is $I(1)$-invariant modulo $(\Ker~T_{-1,0},\Ker~T_{1,2})$, then, for all $n\geq 2$, the vector $s_n^k$ (resp. $t_n^s$) is also $I(1)$-invariant modulo $(\Ker~T_{-1,0},\Ker~T_{1,2})$.
\end{lemma}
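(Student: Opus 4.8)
The plan is to set up an inductive reduction that expresses $s_n^k$ (for $n \geq 2$) in terms of $s_{n-1}^k$ modulo $(\Ker~T_{-1,0},\Ker~T_{1,2})$, in such a way that the $I(1)$-action on $s_n^k$ is governed by the $I(1)$-action on $s_{n-1}^k$ together with explicitly computable correction terms that lie in the kernels. By the observation preceding the lemma, it suffices to check invariance under the three one-parameter families of unipotent/torus elements $\left(\begin{smallmatrix}1 & b\\0 & 1\end{smallmatrix}\right)$, $\left(\begin{smallmatrix}1 & 0\\ \varpi c & 1\end{smallmatrix}\right)$, and $\left(\begin{smallmatrix}1+\varpi a & 0\\0 & 1\end{smallmatrix}\right)$ with $a,b,c \in \mathcal O$.

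The key computational step is to translate the action of each such element on the generator $\left[\left(\begin{smallmatrix}\varpi^n & \mu\\0 & 1\end{smallmatrix}\right),1\right]$ into a change of the parameter $\mu \in I_n$. For the upper unipotent $\left(\begin{smallmatrix}1 & b\\0 & 1\end{smallmatrix}\right)$, one has $\left(\begin{smallmatrix}\varpi^n & \mu\\0 & 1\end{smallmatrix}\right)\left(\begin{smallmatrix}1 & b\\0 & 1\end{smallmatrix}\right) = \left(\begin{smallmatrix}\varpi^n & \mu + \varpi^n b\\0 & 1\end{smallmatrix}\right)$; since $n \geq 2$ and $\mu \in I_n$, the top-right entry $\mu + \varpi^n b$ has the same image in $I_n$ as $\mu$ up to higher-order terms that affect the $IZ$-coset but leave the weighting coefficient $\mu_{n-1}^k$ and the $(n-1)$-st digit $\mu_{n-1}$ untouched — so this action fixes $s_n^k$ on the nose. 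For the torus element, one commutes it past the matrix as in the proof of Corollary \ref{invariants}, picking up a scalar $d^r(da^{-1})^k$ on $s_n^k$ and re-indexing the sum. The genuinely delicate case is the lower unipotent $\left(\begin{smallmatrix}1 & 0\\ \varpi c & 1\end{smallmatrix}\right)$: here $\left(\begin{smallmatrix}\varpi^n & \mu\\0 & 1\end{smallmatrix}\right)\left(\begin{smallmatrix}1 & 0\\ \varpi c & 1\end{smallmatrix}\right) = \left(\begin{smallmatrix}\varpi^n + \varpi\mu c & \mu\\ \varpi c & 1\end{smallmatrix}\right)$, whose leading term (when $\mu$ has nonzero $(n-1)$-st digit) is no longer in the chosen coset representatives; one must apply an Iwasawa-type decomposition to rewrite this in terms of the representatives in (\ref{edges}). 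After the rewriting, Lemma \ref{witt vectors sum} (the Witt-vector addition formula) is used to compute the effect on the digits; the terms where the relevant digit $\mu_{n-1}$ is unchanged assemble back into a scalar multiple of $s_n^k$ plus terms that, after summing over all $\mu \in I_n$, become either a scalar multiple of (the $n{-}1$ analogue governing) $s_{n-1}^k$ or a vector of the type appearing in Lemma \ref{modulo kk} or Lemma \ref{kernel condition}, hence zero modulo $(\Ker~T_{-1,0},\Ker~T_{1,2})$. The argument for $t_n^s$ is parallel, using $\beta = \alpha w$ and the fact that $\beta$ normalizes $I(1)$, together with part (2) of Lemma \ref{modulo kk} in place of part (1).

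The main obstacle I anticipate is the bookkeeping in the lower-unipotent case: tracking how the Iwasawa rewriting of $\left(\begin{smallmatrix}\varpi^n + \varpi\mu c & \mu\\ \varpi c & 1\end{smallmatrix}\right)$ interacts with the multiplicative-representative expansion $\mu = \sum_{i=0}^{n-1}[\mu_i]\varpi^i$, and showing that the pieces where the top digit shifts contribute exactly a linear combination of kernel vectors (using Lemma \ref{kernel condition}, since the exponents $q-1-r+p^l$ and $r+p^l$ are precisely engineered to hit the vanishing ranges there). The exponents $k = q-1-r+p^l$ and $s = r+p^l$ are chosen so that the residual digit sums $\sum_{\mu_{n-1}} \mu_{n-1}^{(\cdot)}(\cdot)$ either vanish or collapse via Lemma \ref{modulo kk}; keeping the two kernels straight (part (1) of Lemma \ref{modulo kk} is an identity modulo $\Ker~T_{1,2}$ alone, part (2) modulo $\Ker~T_{-1,0}$ alone) is where care is needed. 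Once the $n=2$ base case is handled separately in the next subsection, this lemma propagates it to all $n \geq 2$.
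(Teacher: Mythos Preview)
Your proposal has the group action backwards, and this inverts the difficulty of the two unipotent cases. The $G$-action on ${\rm ind}_{IZ}^G\chi_r$ is by left translation: $g'\cdot[g,1]=[g'g,1]$. Thus for the upper unipotent one must compute
\[
\left(\begin{smallmatrix}1 & b\\0 & 1\end{smallmatrix}\right)\left(\begin{smallmatrix}\varpi^n & \mu\\0 & 1\end{smallmatrix}\right)=\left(\begin{smallmatrix}\varpi^n & \mu+b\\0 & 1\end{smallmatrix}\right),
\]
not the product in the order you wrote. The top-right entry becomes $\mu+b$, which perturbs \emph{every} digit of $\mu$, including $\mu_{n-1}$; so the upper unipotent does \emph{not} fix $s_n^k$ on the nose. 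This is precisely the delicate case, and it is where the Witt-vector addition formula (Lemma~\ref{witt vectors sum}) enters. Conversely, the lower unipotent and the torus element are the easier cases, which the paper explicitly skips.

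The structural idea you are missing is the factorization
\[
g_{n,\mu}^0=\left(\begin{smallmatrix}\varpi & [\mu_0]\\0 & 1\end{smallmatrix}\right)g_{n-1,\mu'}^0,\qquad \mu'=[\mu_1]+\dots+[\mu_{n-1}]\varpi^{n-2}.
\]
One commutes $\left(\begin{smallmatrix}1 & b\\0 & 1\end{smallmatrix}\right)$ past the first factor, picking up (via Lemma~\ref{witt vectors sum}) an element $\left(\begin{smallmatrix}1 & B(\mu_0,b)\\0 & 1\end{smallmatrix}\right)\in I(1)$ acting on $g_{n-1,\mu'}^0$. After the substitution $\mu_0\mapsto\mu_0-b_0$, the inner sum over $\mu'$ is exactly $\left(\begin{smallmatrix}1 & B\\0 & 1\end{smallmatrix}\right)s_{n-1}^k$, to which the inductive hypothesis applies directly, yielding $s_{n-1}^k+x_{\mu_0}$ with $x_{\mu_0}\in(\Ker~T_{-1,0},\Ker~T_{1,2})$. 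Summing over $\mu_0$ reassembles $s_n^k$ modulo the kernels. No appeal to Lemma~\ref{kernel condition} or Lemma~\ref{modulo kk} is needed in this inductive step, and the argument works for \emph{arbitrary} $k$: the special exponents $q-1-r+p^l$ only enter in the base case (Lemma~\ref{ball two invariants}), not here.
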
  

\begin{proof}
We prove the case of $s_n^k$ and the case of $t_n^s$ is similar. Assume that $s_{n-1}^k$ is $I(1)$-invariant modulo $(\Ker~T_{-1,0},\Ker~T_{1,2}).$ 

Now,
\begin{align*}
& \left(\begin{array}{cc}
1 & b\\
0 & 1
\end{array}\right)s_n^k \\
&=\sum\limits_{\substack{\mu\in I_n}}\mu_{n-1}^k
\left[\left(\begin{array}{cc}
1 & b\\
0 & 1
\end{array}\right)
\left(\begin{array}{cc}
\varpi^n & \mu\\
0 & 1
\end{array}\right),1\right] \\
&=\sum\limits_{\substack{\mu\in I_n}}\mu_{n-1}^k
\left[\left(\begin{array}{cc}
1 & b\\
0 & 1
\end{array}\right)
\left(\begin{array}{cc}
\varpi & [\mu_0]\\
0 & 1
\end{array}\right)
\left(\begin{array}{cc}
\varpi^{n-1} & \displaystyle{\sum_{i=1}^{n-1}} [\mu_i]\varpi^{i-1}\\
0     & 1
\end{array}\right),1\right]\\
&=\sum\limits_{\substack{\mu\in I_n}}\mu_{n-1}^k
\left[\left(\begin{array}{cc}
\varpi & [\mu_0+b_0]\\
0 & 1
\end{array}\right)
\left(\begin{array}{cc}
1 & B(\mu_0,b)\\
0 & 1
\end{array}\right)
\left(\begin{array}{cc}
\varpi^{n-1} & \displaystyle{\sum_{i=1}^{n-1}} [\mu_i]\varpi^{i-1}, \\
0     & 1
\end{array}\right),1\right]
\end{align*}
where 
\begin{equation}\label{bmu0}
B(\mu_0, b)=\varpi^{e-1}[P_0(\mu_0,b_0)]+[b_1]+[b_2]\varpi+\dots
\end{equation}
and  
\begin{equation}\label{p0mu0}
P_0(\mu_0, b_0)=\frac{\mu_0^{q^e}+b_0^{q^e}-(\mu_0+b_0)^{q^e}}{\varpi^e}
\end{equation}
is obtained from the formula in Lemma \ref{witt vectors sum}. Let
\[\mu'=[\mu_1]+[\mu_2]\varpi+\dots+[\mu_{n-1}]\varpi^{n-2}.\]
We continue by making the substitution $\mu_0 \rightarrow \mu_0-b_0$. Thus,
\begin{align*}
& \left(\begin{array}{cc}
1 & b\\
0 & 1
\end{array}\right)s_n^k \\
&=\sum\limits_{\substack{\mu_0\in I_1}}
\left(\begin{array}{cc}
\varpi & [\mu_0]\\
0 & 1
\end{array}\right)
\left\{\left(\begin{array}{cc}
1 & B(\mu_0-b_0,b)\\
0 & 1
\end{array}\right)
\sum\limits_{\substack{\mu'\in I_{n-1}}}{\mu'}_{n-2}^k
\left[\left(\begin{array}{cc}
\varpi^{n-1} & \mu'\\
0     & 1
\end{array}\right),1\right]\right\}\\
&=\sum\limits_{\substack{\mu_0\in I_1}}
\left(\begin{array}{cc}
\varpi & [\mu_0]\\
0 & 1
\end{array}\right)
\left\{\sum\limits_{\substack{\mu'\in I_{n-1}}}{\mu'}_{n-2}^k\left[\left(\begin{array}{cc}
\varpi^{n-1} & \mu'\\
0     & 1
\end{array}\right),1\right]+x_{\mu_0}\right\},
\end{align*}
by our assumption, where $x_{\mu_0}\in (\Ker~T_{-1,0}, \Ker~T_{1,2})$. Thus, we get
\begin{align*}
\left(\begin{array}{cc}
1 & b\\
0 & 1
\end{array}\right)s_n^k
&=s_n^k+\sum\limits_{\substack{\mu_0\in I_1}}
\left(\begin{array}{cc}
\varpi & [\mu_0]\\
0 & 1
\end{array}\right)x_{\mu_0},
\end{align*}
and hence 
\[\left(\begin{array}{cc}
1 & b\\
0 & 1
\end{array}\right)s_n^k-s_n^k\in (\Ker~T_{-1,0}, \Ker~T_{1,2}).\]
Checking for invariance under
\[\left(\begin{array}{cc}
1     & 0\\
\varpi c &  1
\end{array}\right) ~\&~ \left(\begin{array}{cc}
1+\varpi a & 0\\
0       &  1
\end{array}\right)\]
is even easier which we skip. 
\end{proof}

Now we take the case $n=2$. Recall that $e$ (resp. $f$) is the ramification index (resp. residue degree) of $F$ over $\mathbb Q_p$. We write 
\[r=r_0+r_1p+\dots+r_{f-1}p^{f-1}\]
where $0 \leq r_j \leq p-1$ for $0 \leq j \leq f-1$.

We first observe that for $a,b,c \in \mathcal{O}$, we have
\begin{equation}\label{eqn-k}
\left(\begin{array}{cc}
1+\varpi a & b\\
c\varpi   & 1+d\varpi
\end{array}\right) \left(\begin{array}{cc}
\varpi & [\mu]\\
0 &  1
\end{array}\right) = \left(\begin{array}{cc}
\varpi & [\mu+b_0]\\
0 &  1
\end{array}\right)k,
\end{equation}
for $k \in I(1)$. Indeed,
\begin{align*}
{\rm LHS} &= \left(\begin{array}{cc}
\varpi(1+\varpi a) & [\mu+b_0]+\varpi(*)\\
c\varpi^2   &  1+\varpi(\Delta)
\end{array}\right)\\
&= \left(\begin{array}{cc}
\varpi & [\mu+b_0]\\
0 &  1
\end{array}\right) \left(\begin{array}{cc}
1+a\varpi-[\mu+b_0]c\varpi & (*)-(\mu+b_0)\Delta\\
c\varpi^2        &  1+\varpi\Delta
\end{array}\right),
\end{align*}
where $*,\Delta\in \mathcal{O}$. 
Similarly, one can show that 
\begin{equation}\label{eqn-k'}
\left(\begin{array}{cc}
1+\varpi a & b\\
c\varpi   & 1+d\varpi
\end{array}\right) \left(\begin{array}{cc}
1 & [\mu]\\
0 &  1
\end{array}\right)w= \left(\begin{array}{cc}
1 & [\mu+b_0]\\
0 &  1
\end{array}\right)wk^\prime 
\end{equation} 
for some $k^\prime \in I(1)$. 

\begin{lemma}\label{ball two invariants} 
Assume $0 < r_j < p-1$, and if $f=1$, assume further that $2 < r < p-3$. Then when $(e,f) \neq (1,1)$, we have 
\[gs_2^{q-1-r+p^l}-s_2^{q-1-r+p^l} \in (\Ker~T_{-1,0},\Ker~T_{1,2})\] 
and
\[gt_2^{r+p^l}-t_2^{r+p^l} \in (\Ker~T_{-1,0},\Ker~T_{1,2})\]
for all $g\in I(1)$ and $0\leq l\leq f-1.$ 
\end{lemma}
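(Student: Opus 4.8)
The plan is to reduce, via the factorisation of an arbitrary element of $I(1)$ into the three elementary types recalled just before the statement, to checking that the images in $\pi_r$ of $s_2^{q-1-r+p^l}$ and $t_2^{r+p^l}$ are fixed by each of the generators
\[
\left(\begin{array}{cc}1&b\\0&1\end{array}\right),\qquad
\left(\begin{array}{cc}1&0\\\varpi c&1\end{array}\right),\qquad
\left(\begin{array}{cc}1+\varpi a&0\\0&1\end{array}\right),\qquad a,b,c\in\mathcal{O}.
\]
I would carry out the argument in detail for $s_2^{q-1-r+p^l}$; the case of $t_2^{r+p^l}$ is entirely parallel, obtained by interchanging the roles of $\Ker~T_{-1,0}$ and $\Ker~T_{1,2}$, of \eqref{eqn-k} and \eqref{eqn-k'}, and of parts (1) and (2) of Lemma \ref{modulo kk}.

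For a generator $g$ of one of these types, first write $g^0_{2,\mu}=\bigl(\begin{smallmatrix}\varpi&[\mu_0]\\0&1\end{smallmatrix}\bigr)\bigl(\begin{smallmatrix}\varpi&[\mu_1]\\0&1\end{smallmatrix}\bigr)$ for $\mu=[\mu_0]+[\mu_1]\varpi\in I_2$, push $g$ through the left factor using \eqref{eqn-k} and the addition formula of Lemma \ref{witt vectors sum}, and absorb the resulting element of $I(1)$ into the coset, exactly as in the proof of Lemma \ref{enough to look at ball one for kk}. This brings $g\cdot s_2^{q-1-r+p^l}$ into the normal form $\sum_{\nu\in I_2}\bigl(\nu_1-\psi_g(\nu_0)\bigr)^{q-1-r+p^l}[g^0_{2,\nu},1]$, where $\psi_g$ is explicit: it is the constant $b_1$ for the upper unipotent generator when $e\ge2$, it is $a_0\nu_0$ for the diagonal generator, it is $c_0\nu_0^2$ for the lower unipotent generator, and when $e=1$ it equals $b_1+P_0(\nu_0-b_0,b_0)$ — which, since the terms of the $e=1$ polynomial $P_0(x,y)=\varpi^{-1}(x^q+y^q-(x+y)^q)$ of Lemma \ref{witt vectors sum} that are nonzero modulo $\varpi$ occur only in $x$-degrees divisible by $p^{f-1}$, is a polynomial of degree at most $p-1$ in $\nu_0^{p^{f-1}}$. (Note $0<r_j<p-1$ already forces $p\ge3$.)

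Next, expand $\bigl(\nu_1-\psi_g(\nu_0)\bigr)^{q-1-r+p^l}$ binomially. Writing $k=q-1-r+p^l$, Lucas' theorem (Theorem \ref{lucas}) forces any $j$ with $\binom{k}{j}\not\equiv 0$ to satisfy $j_i\le p-1-r_i$ for $i\ne l$ and $j_l\le p-r_l$ — here $0<r_l<p-1$ ensures that adding $p^l$ to $q-1-r$ causes no carry. A short digit computation then shows that, among such $j<k$, the only one for which $\sum_{\nu_1}\nu_1^{j}[g^0_{2,\nu},1]$ fails to lie in $\Ker~T_{1,2}$ — the criterion being Lemma \ref{kernel condition}(1), equivalently part (2) of Proposition \ref{kernels} — is $j=q-1-r$, whose coefficient $\binom{k}{q-1-r}=p-r_l$ is a unit. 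Hence, modulo $\Ker~T_{1,2}$,
\[
g\cdot s_2^{q-1-r+p^l}\equiv s_2^{q-1-r+p^l}+(p-r_l)\sum_{\nu_0}\bigl(-\psi_g(\nu_0)\bigr)^{p^l}\sum_{\nu_1}\nu_1^{q-1-r}[g^0_{2,\nu},1].
\]
Applying Lemma \ref{modulo kk}(1) with $n=2$ to the inner sum collapses the correction term, modulo $(\Ker~T_{-1,0},\Ker~T_{1,2})$, to a scalar multiple of $\sum_{\nu_0}\bigl(-\psi_g(\nu_0)\bigr)^{p^l}\bigl[\bigl(\begin{smallmatrix}1&[\nu_0]\\0&1\end{smallmatrix}\bigr)w,1\bigr]$, and by Lemma \ref{kernel condition}(2) with $n=1$ it then remains only to show that every exponent $i$ of $\nu_0$ occurring in $\bigl(-\psi_g(\nu_0)\bigr)^{p^l}$ satisfies $0\le i\le r-1$, or else $i>r$ with $i_j<r_j$ for some $0\le j\le f-2$.

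This last verification is the main obstacle, and it is where the hypotheses enter. By the description of $\psi_g$ above, the exponents $i$ occurring in $\bigl(-\psi_g(\nu_0)\bigr)^{p^l}$ form a short explicit list, each of the shape $mp^{t}$ with $0\le m\le p-1$ and $t\le f-1$: for the diagonal, upper-unipotent-with-$e\ge2$, and lower-unipotent generators they are $\{p^l\}$, $\{0\}$, $\{2p^l\}$ respectively, while when $e=1$ (so necessarily $f\ge2$) the upper unipotent contributes $\{mp^{t}:0\le m\le p-1\}$ for the single value $t\equiv f-1+l\pmod f$. For each such $i$: $i=0$ is admissible; if $i=mp^t$ with $m\ge1$ and there is some $j\le f-2$ with $j\ne t$, then $i_j=0<r_j$ and the criterion holds (noting $i\ne r$ since $r_0>0$); and in the remaining cases, where no such $j$ exists ($f=2$ with $t=0$, or $f=1$), the bounds $0<r_j<p-1$ force $r\ge\frac{p^f-1}{p-1}>p-1\ge i$ when $f\ge2$, while when $f=1$ one has $e\ge2$, so the only exponents occurring are $0,1,2$, all $\le r-1$ since $2<r$. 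The completely analogous bookkeeping for $t_2^{r+p^l}$ — now invoking Lemma \ref{modulo kk}(2), Lemma \ref{kernel condition}(1), and, when $f=1$, the full range $2<r<p-3$ to keep the shifted exponents $r+p^l$ and their neighbours inside the windows $[0,r-1]$ and $[0,q-2-r]$ required there — completes the proof. Thus the hard part is not any single algebraic manipulation but this combinatorial accounting: after the binomial expansion and the collapse via Lemma \ref{modulo kk}, one must list every $\nu_0$-exponent produced by $\bigl(-\psi_g(\nu_0)\bigr)^{p^l}$ (and its $t$-analogues) and confirm each genuinely lies in the kernel of the \emph{other} Iwahori-Hecke operator, which is precisely where the arithmetic of the digits $r_j$, and the sharper bound on $r$ for $f=1$, is needed.
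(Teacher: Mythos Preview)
Your proposal is correct and follows essentially the same approach as the paper's proof. Both arguments reduce to the three elementary generators, push them through the first factor of $g^0_{2,\mu}$ via \eqref{eqn-k}, perform the binomial expansion, use Lucas together with Lemma \ref{kernel condition}(1) to see that only the $j=q-1-r$ term survives modulo $\Ker~T_{1,2}$, collapse via Lemma \ref{modulo kk}(1), and then check the residual $n=1$ term lies in $\Ker~T_{-1,0}$ via Lemma \ref{kernel condition}(2); the case of $t_2^{r+p^l}$ is symmetric, and the extra bound $r<p-3$ for $f=1$ enters exactly as you indicate. Your device of packaging all three generators through a single shift $\psi_g(\nu_0)$ (constant, linear, quadratic, or a polynomial in $\nu_0^{p^{f-1}}$ when $e=1$) is a tidy organizational improvement over the paper, which treats the generators one at a time, but the underlying computation is the same.
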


\begin{proof}
We have
\begin{align*}
& \left(\begin{array}{cc}
1 & b\\
0 & 1
\end{array}\right)s_2^{q-1-r+p^l} \\
&= \sum\limits_{\substack{\mu\in I_2}}\mu_1^{q-1-r+p^l}
\left[\left(\begin{array}{cc}
1 & b\\
0 & 1
\end{array}\right)\left(\begin{array}{cc}
\varpi^2 & [\mu_0]+[\mu_1]\varpi\\
0 &  1
\end{array}\right),1\right] \\
&= \sum\limits_{\substack{\mu\in I_2}}\mu_1^{q-1-r+p^l}
\left[\left(\begin{array}{cc}
1 & b\\
0 & 1
\end{array}\right)\left(\begin{array}{cc}
\varpi & [\mu_0]\\
0 &  1
\end{array}\right)\left(\begin{array}{cc}
\varpi & [\mu_1]\\
0 &  1
\end{array}\right),1\right] \\
&= \sum\limits_{\substack{\mu\in I_2}}\mu_1^{q-1-r+p^l}
\left[\left(\begin{array}{cc}
\varpi & [\mu_0+b_0]\\
0 &  1
\end{array}\right)\left(\begin{array}{cc}
1 & B(\mu_0,b)\\
0 &  1
\end{array}\right)
\left(\begin{array}{cc}
\varpi & [\mu_1]\\
0 &  1
\end{array}\right),1\right],
\end{align*}
where $B(\mu_0,b)$ is given by (\ref{bmu0}) in the proof of Lemma \ref{enough to look at ball one for kk}. Now write 
\[B(\mu_0, b)=[b_1+Z]+(*)\varpi\]
where $Z=0$ for $e>1$ and $Z=P_0(\mu_0,b_0)$ for $e=1$. 

To continue, the above expression equals
\[\sum\limits_{\substack{\mu\in I_2}}\mu_1^{q-1-r+p^l}
\left[\left(\begin{array}{cc}
\varpi & [\mu_0+b_0]\\
0 &  1
\end{array}\right)\left(\begin{array}{cc}
1 & [b_1+Z]+(*)\varpi\\
0 &  1
\end{array}\right)
\left(\begin{array}{cc}
\varpi & [\mu_1]\\
0 &  1
\end{array}\right),1\right]\]
which equals
\[\sum\limits_{\substack{\mu\in I_2}}\mu_1^{q-1-r+p^l}
\left[\left(\begin{array}{cc}
\varpi & [\mu_0+b_0]\\
0 &  1
\end{array}\right)\left(\begin{array}{cc}
\varpi & [\mu_1+b_1+Z]\\
0 &  1
\end{array}\right)k,1\right] \]
for $k \in I(1)$, by (\ref{eqn-k}). We continue by making the change of variables
\[\mu_1 \rightarrow \mu_1-b_1-Z ~\&~ \mu_0 \rightarrow \mu_0-b_0,\] 
and we get
\begin{align*}
& \sum\limits_{\substack{\mu\in I_2}}(\mu_1-b_1-Z)^{q-1-r+p^l}
\left[\left(\begin{array}{cc}
\varpi^2 & \mu\\
0   &  1
\end{array}\right),1\right]\\
& = s_2^{q-1-r+p^l}+\sum\limits_{\substack{\mu\in I_2}}\sum\limits_{\substack{i=0}}^{q-1-r+p^l-1}{q-1-r+p^l\choose i}(-b_1-Z)^{q-1-r+p^l-i}\mu_1^i
\left[\left(\begin{array}{cc}
\varpi^2 & \mu\\
0   &  1
\end{array}\right),1\right].
\end{align*}

Now we read the above expression modulo $\Ker~T_{1,2}$. We claim that only the term corresponding to $i=q-1-r$ remains amongst the $q-1-r+p^l$ terms in the inner summation in the above expression. By Lemma \ref{modulo kk} (1), we know that
\[\sum_{\mu\in I_2} \mu_1^i
\left[\left(\begin{array}{cc}
\varpi^2 & \mu\\
0   &  1
\end{array}\right),1\right] \in \Ker~T_{1,2}\] 
precisely when $0 \leq i \leq q-2-r$ or $i > q-1-r$ such that $i_j < p-1-r_j$ for some $0 \leq j \leq f-2$. Note that if $i > q-1-r$ and $i_j \geq p-1-r_j$ for all $0 \leq j \leq f-1$ (cf. Remark \ref{rmk-f-1}) then $i_j > p-1-r_j$ for some $0 \leq j \leq l-1$ (since $i \leq q-1-r+p^l-1$). If this is the case then observe that 
\[ {q-1-r+p^l \choose i} \equiv 0 \mod p  \]
by Corollary \ref{prime divides bio coeff}. Thus, modulo $\Ker~T_{1,2}$, we get
\begin{align*}
& \left(\begin{array}{cc}
1 & b\\
0 & 1
\end{array}\right)s_2^{q-1-r+p^l}  \\
&=s_2^{q-1-r+p^l}+\sum\limits_{\substack{\mu\in I_2}}{q-1-r+p^l\choose q-1-r}(-b_1-Z)^{p^l}\mu_1^{q-1-r}
\left[\left(\begin{array}{cc}
\varpi^2 & \mu\\
0   &  1
\end{array}\right),1\right] \\
&= s_2^{q-1-r+p^l}+\sum\limits_{\substack{\mu_0\in I_1}}(p-r_l)(-b_1-Z)^{p^l}
\left[\left(\begin{array}{cc}
1 & [\mu_0]\\
0 &  1
\end{array}\right)w,1\right]
\end{align*}
by Lemma \ref{modulo kk} (1) and the binomial coefficient here is computed via Theorem \ref{lucas}.

Now if $e>1$ then we have $Z=0$. Therefore, it follows, by Lemma \ref{kernel condition} (2), that
\[\left(\begin{array}{cc}
1 & b\\
0 & 1
\end{array}\right)s_2^{q-1-r+p^l}-s_2^{q-1-r+p^l}\in \Ker~T_{-1,0},\]
and thus we have proved
\[ \left(\begin{array}{cc}
1 & b\\
0 & 1
\end{array}\right)s_2^{q-1-r+p^l} \equiv s_2^{q-1-r+p^l} \mod (\Ker~T_{-1,0},\Ker~T_{1,2}). \]

If $e=1$ then $Z=P_0(\mu_0,b_0)$. As $F$ is unramified over $\mathbb Q_p$, we have $\varpi = p$. Now by Corollary \ref{prime divides bio coeff}, it follows that
\begin{align*}
Z &= \frac{\mu_0^{q^e}+b_0^{q^e}-(\mu_0+b_0)^{q^e}}{\varpi^e} \\
&\equiv -\sum\limits_{\substack{i=1}}^{p-1}\frac{1}{p}{p^f\choose ip^{f-1}}b_0^{p^f-ip^{f-1}}\mu_0^{ip^{f-1}}\mod p.
\end{align*} 
In this case, if further $f\neq 1$ we have, modulo $\Ker~T_{1,2}$, 
\begin{align*}
\left(\begin{array}{cc}
1 & b\\
0 & 1
\end{array}\right)s_2^{q-1-r+p^l}-s_2^{q-1-r+p^l} &= \sum\limits_{\substack{\mu_0\in I_1}}(p-r_l)(-b_1-Z)^{p^l}
\left[\left(\begin{array}{cc}
1   & [\mu_0]\\
0   &  1
\end{array}\right)w,1\right] \\
&= \sum\limits_{\substack{\mu_0\in I_1}}r_l(b_1^{p^l}+Z^{p^l})
\left[\left(\begin{array}{cc}
1   & [\mu_0]\\
0   &  1
\end{array}\right)w,1\right].
\end{align*} 
Note that both
\[ \sum_{\mu_0 \in I_1} \left[\left(\begin{array}{cc}
1   & [\mu_0]\\
0   &  1
\end{array}\right)w,1\right] ~\&~ \sum_{\mu_0 \in I_1} \mu_0^{ip^{l-1}} \left[\left(\begin{array}{cc}
1   & [\mu_0]\\
0   &  1
\end{array}\right)w,1\right]\]
are in $\Ker~T_{-1,0}$, by Lemma \ref{kernel condition} (2). Thus, once again we have proved
\[ \left(\begin{array}{cc}
1 & b\\
0 & 1
\end{array}\right)s_2^{q-1-r+p^l} \equiv s_2^{q-1-r+p^l} \mod (\Ker~T_{-1,0},\Ker~T_{1,2}). \]

Now we analyze invariance for the lower unipotent representative of $I(1)$. We have
\begin{align*}
\left(\begin{array}{cc}
1     & 0\\
\varpi c & 1
\end{array}\right)s_2^{q-1-r+p^l} &= \sum\limits_{\substack{\mu\in I_2}}\mu_1^{q-1-r+p^l}\left[\left(\begin{array}{cc}
1     & 0\\
\varpi c & 1
\end{array}\right)\left(\begin{array}{cc}
\varpi^2 & [\mu_0]+[\mu_1]\varpi\\
0 &  1
\end{array}\right), 1\right] \\
&= \sum\limits_{\substack{\mu\in I_2}}\mu_1^{q-1-r+p^l}
\left[\left(\begin{array}{cc}
1     & 0\\
\varpi c & 1
\end{array}\right)\left(\begin{array}{cc}
\varpi & [\mu_0]\\
0 &  1
\end{array}\right)\left(\begin{array}{cc}
\varpi & [\mu_1]\\
0 &  1
\end{array}\right),1\right]
\end{align*}
which we express as
\[\sum\limits_{\substack{\mu\in I_2}}\mu_1^{q-1-r+p^l}
\left[\left(\begin{array}{cc}
\varpi & [\mu_0]\\
0 &  1
\end{array}\right)\left(\begin{array}{cc}
1-\varpi c[\mu_0] & -[\mu_0^2]c\\
\varpi^2c     &  1+\varpi c[\mu_0]
\end{array}\right)
\left(\begin{array}{cc}
\varpi & [\mu_1]\\
0 &  1
\end{array}\right),1\right]
\]
and this equals
\[
\sum\limits_{\substack{\mu\in I_2}}\mu_1^{q-1-r+p^l}
\left[\left(\begin{array}{cc}
\varpi & [\mu_0]\\
0 &  1
\end{array}\right)\left(\begin{array}{cc}
\varpi & [\mu_1-c_0\mu_0^2]\\
0 &  1
\end{array}\right)k,1\right]
\]
for $k \in I(1)$ by (\ref{eqn-k}). Changing $\mu_1 \rightarrow \mu_1+c_0\mu_0^2$, we get
\begin{align*}
\left(\begin{array}{cc}
1     & 0\\
\varpi c & 1
\end{array}\right)s_2^{q-1-r+p^l}  &= \sum\limits_{\substack{\mu\in I_2}}(\mu_1+c_0\mu_0^2)^{q-1-r+p^l}
\left[\left(\begin{array}{cc}
\varpi^2 & \mu\\
0 &  1
\end{array}\right),1\right] 
\end{align*}
which we read modulo $\Ker~T_{1,2}$ and get 
\begin{align*}
s_2^{q-1-r+p^l}+\sum\limits_{\substack{\mu\in I_2}}{q-1-r+p^l\choose q-1-r}(c_0\mu_0^2)^{p^l}\mu_1^{q-1-r}
\left[\left(\begin{array}{cc}
\varpi^2 & \mu\\
0 &  1
\end{array}\right),1\right]
\end{align*}
 by Corollary \ref{prime divides bio coeff} together with Lemma \ref{kernel condition} (1), exactly as we have argued before. Now this equals, modulo $\Ker~T_{1,2}$,
\begin{align*}
s_2^{q-1-r+p^l}+\sum\limits_{\substack{\mu_0\in I_1}}(p-r_l)c_0^{p^l}\mu_0^{2p^l}
\left[\left(\begin{array}{cc}
1 & [\mu_0]\\
0 &  1
\end{array}\right)w,1\right]
\end{align*}
by Theorem \ref{lucas} and Lemma \ref{modulo kk} (1). By Lemma \ref{kernel condition} (2), this vector belongs to $\Ker~T_{-1,0}$ (with the extra assumption that $3 \leq r$ when $f=1$). Thus, we have proved    
\[\left(\begin{array}{cc}
1     & 0\\
\varpi c & 1
\end{array}\right)s_2^{q-1-r+p^l} \equiv s_2^{q-1-r+p^l} \mod (\Ker ~T_{-1,0},\Ker ~T_{1,2}).\]
 
The proof for showing that
\[\left(\begin{array}{cc}
1+\varpi a  & 0\\
0       & 1
\end{array}\right)s_2^{q-1-r+p^l}-s_2^{q-1-r+p^l}\in (\Ker~T_{-1,0},\Ker~T_{1,2})\]
is similar and therefore we skip it. 

The argument for 
\begin{align*}
gt_2^{r+p^l}-t_2^{r+p^l}\in (\Ker~T_{-1,0},\Ker~T_{1,2})
\end{align*}
for all $g \in I(1)$ is similar to the one for $s_2^{q-1-r+p^l}$. Note that corresponding to the case $3 \leq r$ in the totally ramified case for $s_2^{q-1-r+p^l}$, in the case of $t_2^{r+p^l}$ we will get $r \leq p-4$.
\end{proof}

\subsection{Linear independence}

The following lemma gives the action of the Iwahori subgroup $I$ on the $I(1)$-invariant vectors (cf. \cite[Lemma 3.6]{hen18}).

\begin{lemma}\label{i action}
Let $\left(\begin{array}{cc}
a & b\\
c & d
\end{array}\right)\in I.$ Let $s_n^k$ and $t_n^s$ be $I(1)$-invariants modulo $(\Ker~T_{-1,0},\Ker~T_{1,2}).$ Then they are $I$-eigenvectors and those actions are given by
\begin{itemize}
\item[{\rm(1)}] $\left(\begin{array}{cc}
a & b\\
c & d
\end{array}\right)\cdot s_n^k=d^r(da^{-1})^ks_n^k,$
\item[{\rm(2)}] $\left(\begin{array}{cc}
a & b\\
c & d
\end{array}\right)\cdot t_n^s=a^r(da^{-1})^st_n^s.$ 
\end{itemize} 
\end{lemma}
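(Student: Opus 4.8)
The plan is to reduce everything to the already–established computations of Corollary~\ref{invariants} and the formulas (\ref{formula T_{-1,0}}), (\ref{formula T_{1,2}}), using the general principle that on an $I(1)$-invariant vector the action of $\left(\begin{smallmatrix} a & b \\ c & d\end{smallmatrix}\right)\in I$ coincides with the action of its diagonal part $\left(\begin{smallmatrix} a & 0 \\ 0 & d\end{smallmatrix}\right)$, exactly as used in the proof of Corollary~\ref{invariants}. So first I would note that since $s_n^k$ and $t_n^s$ are assumed $I(1)$-invariant modulo $(\Ker~T_{-1,0},\Ker~T_{1,2})$, and since $I/I(1)$ is represented by diagonal matrices $\left(\begin{smallmatrix} a & 0 \\ 0 & d\end{smallmatrix}\right)$ with $a,d\in\mathbb{F}_q^\times$, it suffices to compute the action of such a diagonal element on $s_n^k$ and $t_n^s$ in $\pi_r$.

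The second step is the diagonal computation itself, which is essentially already carried out in the proof of Corollary~\ref{invariants}: for $\left(\begin{smallmatrix} a & 0 \\ 0 & d\end{smallmatrix}\right)$ one writes
\[
\left(\begin{array}{cc} a & 0 \\ 0 & d\end{array}\right)\left(\begin{array}{cc}\varpi^n & \mu \\ 0 & 1\end{array}\right)=\left(\begin{array}{cc}\varpi^n & ad^{-1}\mu \\ 0 & 1\end{array}\right)\left(\begin{array}{cc} a & 0 \\ 0 & d\end{array}\right),
\]
so that $\left(\begin{smallmatrix} a & 0 \\ 0 & d\end{smallmatrix}\right)[g_{n,\mu}^0,1]=\chi_r\!\left(\begin{smallmatrix} a & 0 \\ 0 & d\end{smallmatrix}\right)[g_{n,ad^{-1}\mu}^0,1]=d^r[g_{n,ad^{-1}\mu}^0,1]$, and then re-indexing $\mu\mapsto a^{-1}d\,\mu$ in the defining sum $s_n^k=\sum_{\mu\in I_n}\mu_{n-1}^k[g_{n,\mu}^0,1]$ produces the extra factor $(da^{-1})^k$ coming from $(\mu_{n-1})^k\mapsto (a^{-1}d)^k(\mu_{n-1})^k$. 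This gives formula (1). For (2) the same manipulation applied to the defining sum for $t_n^s$, which involves matrices of the form $g^0_{n-1,[\mu]_{n-1}}\left(\begin{smallmatrix} 1 & [\mu_{n-1}] \\ 0 & 1\end{smallmatrix}\right)w$, moves the diagonal matrix through $w$ so that the roles of $a$ and $d$ get swapped, yielding the character value $a^r$ (the $d^r$ from $\chi_r$ applied after crossing $w$ becomes $a^r$ because $w$ conjugates the torus) together with the factor $(da^{-1})^s$ from re-indexing; this is precisely what was recorded in Corollary~\ref{invariants} for the special cases $k=s=0$. One should be slightly careful that the re-indexing $\mu\mapsto a^{-1}d\,\mu$ is legitimate on $I_n$, i.e. that $a^{-1}d\in\mathbb{F}_q^\times$ permutes $I_n$ via its action on the Teichm\"uller coordinates; this follows since multiplying a Teichm\"uller representative $[\mu_i]$ by $[a^{-1}d]$ gives $[a^{-1}d\,\mu_i]$ and the sum is over all of $I_n$.

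The only genuine subtlety — and the place where the hypothesis ``$s_n^k,t_n^s$ are $I(1)$-invariant modulo the kernels'' is essential — is that all of this must be done \emph{inside} $\pi_r$, i.e. modulo $(\Ker~T_{-1,0},\Ker~T_{1,2})$; a priori $\left(\begin{smallmatrix} a & b \\ c & d\end{smallmatrix}\right)$ and $\left(\begin{smallmatrix} a & 0 \\ 0 & d\end{smallmatrix}\right)$ only agree on $I(1)$-invariant vectors of the ambient induced representation, but since $\Ker~T_{-1,0}$ and $\Ker~T_{1,2}$ are $G$-submodules the induced action on the quotient is well-defined, and the $I(1)$-invariance hypothesis is exactly what guarantees $\left(\begin{smallmatrix} a & b \\ c & d\end{smallmatrix}\right)v=\left(\begin{smallmatrix} a & 0 \\ 0 & d\end{smallmatrix}\right)v$ for $v\in\{s_n^k,t_n^s\}$ in $\pi_r$. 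I expect this bookkeeping to be the main (mild) obstacle; the eigenvector computation proper is a one-line re-indexing argument identical to the one already performed in Corollary~\ref{invariants}, so I would simply refer to that computation and indicate the modification $k,s\neq 0$ rather than repeating it.
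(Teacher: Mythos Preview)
Your proposal is correct and follows exactly the paper's approach: the paper simply says the proof ``is straightforward and we have already done it in the proof of the second part of Corollary~\ref{invariants},'' which is precisely the reduction to the diagonal action via $I/I(1)$ followed by the re-indexing computation you describe. In fact, the proof of Corollary~\ref{invariants} already records the formulas for general $k$ and $s$ (not just $k=s=0$), so there is nothing to add beyond noting, as you do, that the $I(1)$-invariance hypothesis is what allows the reduction to diagonal matrices in $\pi_r$.
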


\begin{proof}
The proof is straightforward and we have already done it in the proof of the second part of Corollary \ref{invariants}.
\end{proof}

\begin{remark}\label{same computation}
Lemmas \ref{enough to look at ball one for kk}, \ref{ball two invariants} and \ref{i action} remain true for $\beta s_n^k$ and $\beta t_n^s$. 
\end{remark}

\begin{proposition}\label{linear independence}
The set of vectors in $\mathcal{S}_2 \cup \mathcal T_2$ of Theorem \ref{thm-main} are linearly independent.
\end{proposition}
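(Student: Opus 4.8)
The plan is to prove linear independence by exploiting the $I$-action worked out in Lemma \ref{i action} (and extended to the $\beta$-twisted vectors in Remark \ref{same computation}) together with the $G$-equivariant identification of $G/IZ$ with oriented edges of the Bruhat--Tits tree, so that distinct $s_n^k$, $t_n^s$, $\beta s_n^k$, $\beta t_n^s$ have "supports" on disjoint families of edges. First I would separate the $I(1)$-invariant vectors into their $I/I(1)$-isotypic components: by Lemma \ref{i action}, $s_n^k$ transforms under $\mathrm{diag}(a,d)$ by the character $d^{r+k}a^{-k}$, while $t_n^s$ transforms by $a^{r+s}d^{-s}$, and the $\beta$-twisted versions swap these two characters (since $\beta$ normalises $I(1)$ and conjugates the diagonal torus by $w$). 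For the vectors actually appearing in $\mathcal S_2\cup\mathcal T_2$ we have $k=q-1-r+p^l$ and $s=r+p^l$; I would compute that $s_n^{q-1-r+p^l}$ has torus character $d^{q-1+p^l}a^{-(q-1-r+p^l)} = d^{p^l}a^{r-p^l}$ (using $a^{q-1}=1=d^{q-1}$ on $\mathbb F_q^\times$) and $t_n^{r+p^l}$ has character $a^{2r+p^l}d^{-(r+p^l)} = a^{r-p^l}d^{p^l}$ — so within a fixed $l$ the $s$-type and $t$-type characters agree, but vectors with different $l$ land in different eigenspaces. Hence it suffices to prove independence within each pair $(l,l')$ collapsing to the same character, and in particular to handle $\{s_n^{q-1-r+p^l}\}_{n\ge 2}$, $\{\beta s_n^{q-1-r+p^l}\}_{n\ge 2}$, $\{t_n^{r+p^l}\}_{n\ge 2}$, $\{\beta t_n^{r+p^l}\}_{n\ge 2}$ simultaneously for each relevant $l$ (and, if two values $l\ne l'$ give $r-p^l\equiv r-p^{l'}$ and $p^l\equiv p^{l'}$ mod $q-1$, which forces $l=l'$, so in fact the four families for a single $l$ are all that interact).

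The core of the argument is then to show that a relation
\[
\sum_{n\ge 2} c_n\, s_n^{q-1-r+p^l} + \sum_{n\ge 2} c_n'\, \beta s_n^{q-1-r+p^l} + \sum_{n\ge 2} d_n\, t_n^{r+p^l} + \sum_{n\ge 2} d_n'\, \beta t_n^{r+p^l} = 0
\]
in $\pi_r$ — i.e.\ modulo $(\Ker\,T_{-1,0},\Ker\,T_{1,2})$ — forces all coefficients to vanish. The key tool is that $\Ker\,T_{-1,0}+\Ker\,T_{1,2}$, while not a direct sum, is a very structured submodule of $\mathrm{ind}_{IZ}^G\chi_r$: by Proposition \ref{kernels} its $I(1)$-invariants are spanned by $\{t_n^0,\beta t_n^0\}_{n\ge 1}\cup\{s_n^0,\beta s_n^0\}_{n\ge 1}$ (the span of $\Ker\,T_{-1,0}^{I(1)}$ and $\Ker\,T_{1,2}^{I(1)}$, since $I(1)$-invariance is exact here — or more carefully, the $I(1)$-invariants of a sum of submodules is the sum of the $I(1)$-invariants once one checks the relevant Tor vanishes, which follows because $I(1)$ is a pro-$p$ group and we are over $\overline{\mathbb F}_p$... actually the cleaner route: $(\Ker T_{-1,0}+\Ker T_{1,2})^{I(1)} \supseteq \Ker T_{-1,0}^{I(1)}+\Ker T_{1,2}^{I(1)}$ is all we need for one direction). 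The point is that $t_n^0,\beta t_n^0,s_n^0,\beta s_n^0$ carry torus characters $a^r, d^r, d^r, a^r$ respectively, none of which equals the character $d^{p^l}a^{r-p^l}$ of our vectors when $0<p^l$, i.e.\ always (here I use $0<r_j<p-1$ for all $j$, which guarantees $p^l\not\equiv 0,r \bmod q-1$ in the relevant range, exactly the hypothesis of Theorem \ref{thm-main}; the extra constraint $2<r<p-3$ when $f=1$ rules out the remaining small-degree coincidences). Therefore the $I(1)$-invariant part of $(\Ker T_{-1,0},\Ker T_{1,2})$ lying in our eigenspace is \emph{zero}, so a relation as above holding in $\pi_r$ already holds in $\mathrm{ind}_{IZ}^G\chi_r$ itself, and the linear independence of $\mathcal S_2\cup\mathcal T_2$ in $\mathrm{ind}_{IZ}^G\chi_r$ is immediate from the explicit coset representatives (\ref{edges}): the functions $s_n^k$ for distinct $n$ are supported on edges at distinct distances $2n$ from the base vertex (likewise $t_n^s$), and $s$-type, $t$-type, $\beta$-$s$-type, $\beta$-$t$-type occupy four disjoint classes of edges, while for fixed $n$ the vector $s_n^k$ is a nonzero combination of the functions $[g_{n,\mu}^0,1]$ which are linearly independent.

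The step I expect to be the main obstacle is pinning down precisely that the target eigenspace for the character $\chi$ of $I/I(1)$ attached to $(q-1-r+p^l)$ does not meet $(\Ker T_{-1,0})^{I(1)}+(\Ker T_{1,2})^{I(1)}$ — in other words, the careful bookkeeping of torus characters and the verification that the numerical hypotheses $0<r_j<p-1$ (and $2<r<p-3$ for $f=1$) are exactly what is needed to avoid the degenerate overlaps with the degree-$0$ vectors $s_n^0,t_n^0$ and their $\beta$-twists, and with each other across different $l$. Once that character-separation is in hand, everything reduces to linear independence inside the induced representation, which is formal. A secondary point requiring a short argument is that $I$-invariance of $\Ker T_{-1,0}+\Ker T_{1,2}$ together with the $\{0\}$-intersection of Proposition \ref{intersection} lets us project onto eigenspaces cleanly; I would phrase this by applying, to the given relation, the idempotent $e_\chi=\frac{1}{|I/I(1)|}\sum_{\bar g}\chi(\bar g)^{-1}\bar g$ in $\overline{\mathbb F}_p[I/I(1)]$ (valid since $|I/I(1)|=(q-1)^2$ is prime to $p$), which kills every summand not in the $\chi$-eigenspace and is the identity on the rest.
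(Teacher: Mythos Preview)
Your proposal has a genuine structural gap. The vectors $s_n^{q-1-r+p^l}$, $t_n^{r+p^l}$ and their $\beta$-twists are eigenvectors for the diagonal torus $T_0\cong(\mathbb F_q^\times)^2$ in the induced representation, but they are \emph{not} $I(1)$-invariant there --- Lemma~\ref{ball two invariants} only gives invariance modulo $K:=\Ker T_{-1,0}\oplus\Ker T_{1,2}$. So if $v=\sum c_iv_i\in K$, then $v\in K_\chi$ (the $T_0$-isotypic piece), but there is no reason for $v$ to lie in $K^{I(1)}$. Your key step computes $(K^{I(1)})_\chi$ via Corollary~\ref{invariants} and finds it to be zero; however $(K^{I(1)})_\chi=0$ does not imply $K_\chi=0$, and it is $K_\chi$ you would need to control in order to conclude ``a relation holding in $\pi_r$ already holds in $\mathrm{ind}_{IZ}^G\chi_r$''. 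Applying the idempotent $e_\chi$ does not help: since each $v_i$ is already $\chi$-isotypic for $T_0$, $e_\chi v=v$ and you learn nothing new. (There is also a computational slip: from Lemma~\ref{i action} the torus character of $t_n^{r+p^l}$ is $a^{-p^l}d^{r+p^l}$, not $a^{r-p^l}d^{p^l}$, so the $s$-type and $t$-type vectors in fact sit in \emph{different} eigenspaces --- which is exactly what the paper exploits.)

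The paper's argument proceeds differently after the eigenvalue separation (which is done in $\pi_r$, where the vectors genuinely are $I$-eigenvectors). Having reduced to a relation $\sum_{i\ge2} c_i\,s_i^{q-1-r+p^l}\in K$ for a fixed $l$, one observes directly that this particular element --- supported entirely on edges of the form $[g_{n,\mu}^0,1]$ --- cannot have a nonzero component in $\Ker T_{-1,0}$, so it lies in $\Ker T_{1,2}$. Then, since $T_{1,2}$ applied to $s_i^{q-1-r+p^l}$ and $s_j^{q-1-r+p^l}$ for $i\ne j$ produces functions supported at different radii (no cancellation), each $c_i\,s_i^{q-1-r+p^l}\in\Ker T_{1,2}$, and Lemma~\ref{kernel condition}(1) forces $c_i=0$. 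This is the missing idea: rather than comparing with $K^{I(1)}$, one pins down \emph{which} summand of $K=\Ker T_{-1,0}\oplus\Ker T_{1,2}$ the combination can lie in and then applies the operator directly.
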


\begin{proof}
Note that the vectors in $\mathcal{S}_2 \cup \mathcal T_2$ consist of vectors of the form 
\[s_n^{q-1-r+p^l}, \beta s_n^{q-1-r+p^l}, t_n^{r+p^l}, \beta t_n^{r+p^l} \] for $n\geq 2$ and $0\leq l\leq f-1$. These are invariant under $I(1)$ modulo $(\Ker~T_{-1,0}, \Ker~T_{1,2})$ except for the case when both $e=1$ and $f=1$ (cf. Lemmas \ref{enough to look at ball one for kk}, \ref{ball two invariants} and Remark \ref{same computation}). 

For any vector $v \in {\rm ind}_{IZ}^G \chi_r$, note that $v$ and $\beta v$ cannot cancel each other (pictorially they are on two different sides of the tree of ${\rm SL}_2(F)$). Therefore, it is enough to show that the set $\{s_n^{q-1-r+p^l}, t_n^{r+p^l}\}$, for $n\geq 2$ and $0\leq l\leq f-1$, is linearly independent. Since $s_n^{q-1-r+p^l}$ and $t_n^{r+p^l}$ have different $I$-eigenvalues, it is enough to show that $\{s_n^{q-1-r+p^l}\}$ and $\{t_n^{r+p^l}\}$, for $n\geq 2$ and $0\leq l\leq f-1$, are linearly independent.    

We show that the vectors in 
\[\{s_n^{q-1-r+p^l}\}_{n\geq 2, 0\leq l \leq f-1}\]
are linearly independent, and the proof for $\{t_n^{r+p^l}\}$ is similar. Suppose that
\[\sum\limits_{\substack{i=2}}^nc_is_i^{q-1-r+p^l}\in(\Ker~T_{-1,0},\Ker~T_{1,2})\]
where $c_i\in\overline{{\mathbb F}}_p$ and $n\in\mathbb{N}$. Since no reduction is possible in the above expression and also these vectors obviously cannot be in $\Ker~T_{-1,0}$, it follows that \[\sum\limits_{\substack{i=2}}^nc_is_i^{q-1-r+p^l}\in\Ker~T_{1,2}.\] For $i\neq j$ with $2\leq i,j\leq n$, once again from the formula for $T_{1,2}$, there cannot be any cancellation between $T_{1,2}(c_is_i^{q-1-r+p^l})$ and $T_{1,2}(c_js_j^{q-1-r+p^l}),$ so we get
\[c_is_i^{q-1-r+p^l}\in\Ker~T_{1,2}\] for all $2\leq i\leq n.$ By Lemma \ref{kernel condition} (1), it follows that $c_i=0$ for all $2\leq i\leq n$. 
\end{proof}

\begin{remark}\label{rmk-li}
It follows by eigenvalue considerations as in the proof of Proposition \ref{linear independence} that the set
\[\mathcal S_2 \cup \{[ {\rm Id},1],[\beta,1]\} \cup \mathcal T_2\]
is linearly independent.
\end{remark}

\subsection{Auxiliary lemmas}

We will have to make use of the following elementary lemma \cite[Lemma 2.8]{hen18}.

\begin{lemma}\label{coeff predicting}
Let $n\geq 1$ and $\phi:I_n\rightarrow \overline{\mathbb{F}}_p$ be any set map. Then there exists a unique polynomial $Q(x_0,\dots,x_{n-1})\in \overline{\mathbb{F}}_p[x_0, x_1, \dots, x_{n-1}]$ in which degree of each variable is at most $q-1$ and $\phi(\mu)=Q(\mu_0, \mu_1, \dots, \mu_{n-1})$ for all 
$\mu\in I_n.$ 
\end{lemma}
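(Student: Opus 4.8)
\emph{Approach.} The statement is the familiar fact that every function on $\mathbb{F}_q^n$ valued in a field of characteristic $p$ is given by a unique ``reduced'' polynomial, transported along the bijection $I_n \xrightarrow{\ \sim\ } \mathbb{F}_q^n$ sending $\mu = \sum_{i=0}^{n-1}[\mu_i]\varpi^i$ to $(\mu_0,\dots,\mu_{n-1})$. The plan is to phrase it as a statement about a single linear map and reduce to a dimension count. Let $V$ be the $\overline{\mathbb{F}}_p$-vector space of polynomials $Q\in\overline{\mathbb{F}}_p[x_0,\dots,x_{n-1}]$ in which each variable occurs to degree at most $q-1$, and let $W$ be the $\overline{\mathbb{F}}_p$-vector space of all set maps $I_n\to\overline{\mathbb{F}}_p$. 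There is an obvious $\overline{\mathbb{F}}_p$-linear evaluation map $V\to W$, $Q\mapsto\big(\mu\mapsto Q(\mu_0,\dots,\mu_{n-1})\big)$. Now $V$ has the monomials $x_0^{a_0}\cdots x_{n-1}^{a_{n-1}}$ with $0\le a_i\le q-1$ as a basis, so $\dim_{\overline{\mathbb{F}}_p} V = q^n$, while $W$ has the indicator functions of the $q^n$ points of $I_n$ as a basis, so $\dim_{\overline{\mathbb{F}}_p} W = q^n$ as well. Hence it suffices to prove that the evaluation map is injective; existence and uniqueness of $Q$ then follow simultaneously, since an injective linear map between spaces of equal finite dimension is bijective.

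\emph{Injectivity.} I would prove that the evaluation map is injective by induction on $n$. For $n=1$: a nonzero $Q\in\overline{\mathbb{F}}_p[x_0]$ of degree at most $q-1$ has at most $q-1$ roots in $\overline{\mathbb{F}}_p$, hence cannot vanish at all $q$ elements of $\mathbb{F}_q$; so if the evaluation of $Q$ is the zero function then $Q=0$. For the inductive step, given $Q\in\overline{\mathbb{F}}_p[x_0,\dots,x_{n-1}]$ reduced and evaluating to zero on $I_n$, write $Q=\sum_{j=0}^{q-1}Q_j(x_0,\dots,x_{n-2})\,x_{n-1}^j$ with each $Q_j$ of degree at most $q-1$ in every variable. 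Fixing any $(a_0,\dots,a_{n-2})\in\mathbb{F}_q^{n-1}$, the single-variable polynomial $\sum_{j=0}^{q-1}Q_j(a_0,\dots,a_{n-2})\,x_{n-1}^j$ vanishes on all of $\mathbb{F}_q$, so by the case $n=1$ all its coefficients vanish, i.e. $Q_j(a_0,\dots,a_{n-2})=0$ for every $j$. Since $(a_0,\dots,a_{n-2})$ was arbitrary, each $Q_j$ evaluates to zero on $I_{n-1}$, and the induction hypothesis gives $Q_j=0$ for all $j$, whence $Q=0$.

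\emph{An alternative for existence.} If one prefers an explicit formula, for $a\in\mathbb{F}_q$ put $\delta_a(x)=1-(x-a)^{q-1}$, which takes the value $1$ at $a$ and $0$ at every other element of $\mathbb{F}_q$ because $b^{q-1}=1$ for $0\neq b\in\mathbb{F}_q$. Then
\[
Q(x_0,\dots,x_{n-1})=\sum_{\mu\in I_n}\phi(\mu)\prod_{i=0}^{n-1}\delta_{\mu_i}(x_i)
\]
has each variable to degree at most $q-1$ and satisfies $Q(\mu_0,\dots,\mu_{n-1})=\phi(\mu)$ for all $\mu\in I_n$; uniqueness still comes from the injectivity/dimension argument above. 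In the write-up I would keep whichever of the two presentations is shorter.

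\emph{Expected difficulty.} There is no genuine obstacle here; this is a standard fact. The only points that need a little care are the identification of $I_n$ with $\mathbb{F}_q^n$ via the coordinates $\mu_i$ and tracking the degree bound under the decomposition in the inductive step, both of which are routine.
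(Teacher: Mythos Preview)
Your argument is correct and complete. The dimension count together with the induction on $n$ for injectivity is the standard proof, and the explicit interpolation formula via $\delta_a(x)=1-(x-a)^{q-1}$ is a nice alternative for existence.

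There is nothing to compare here, however: the paper does not prove this lemma at all. It is stated with a citation to \cite[Lemma 2.8]{hen18} and then used as a black box. So your write-up is strictly more than what the paper provides.
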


The next two lemmas are the first steps towards the proof of Theorem \ref{thm-main}.

\begin{lemma}\label{possible mu power in kk}
Let $\mu = [\mu_0] + [\mu_1]\varpi + \dots + [\mu_{n-1}] \varpi^{n-1} \in I_n$ and $r=r_0+r_1p+\dots+r_{f-1}p^{f-1}$ with $0<r_j<p-1$ for all $0\leq j\leq f-1.$ Let 
\[f_n=f_n'+f_n''\]
be such that  \[f_n'=\sum\limits_{\substack{\mu\in I_n}} a(\mu_0,\mu_1,\dots,\mu_{n-1})
\left[\left(\begin{array}{cc}
\varpi^n& \mu\\
0 & 1
\end{array}\right),1\right]\] and 
\[f_n''=\sum\limits_{\substack{\mu\in I_n}}b(\mu_0,\mu_1,\dots,\mu_{n-1})
\left[\left(\begin{array}{cc}
\varpi^{n-1}& [\mu]_{n-1}\\
0      & 1
\end{array}\right)
\left(\begin{array}{cc}
1& [\mu_{n-1}]\\
0 & 1
\end{array}\right)w,1\right],\] where $a(\mu_0,\dots,\mu_{n-1})$ and $b(\mu_0,\dots,\mu_{n-1})$ are polynomials in $\mu_0,\dots,\mu_{n-1}.$ Suppose 
\[\left(\begin{array}{cc}
1 & -\varpi^{n-1}\\
0 &  1
\end{array}\right)f_n-f_n\in (\Ker~T_{-1,0},\Ker~T_{1,2}).\] 
Then
\begin{enumerate}
\item[{\rm(1)}] the possible powers of $\mu_{n-1},$ say $k = k_0+k_1p+\dots+k_{f-1}p^{f-1}$, in $a(\mu_0,\dots,\mu_{n-1})$ will satisfy one of the following three conditions:
\begin{enumerate}
\item[{\rm (a)}] there exists some $0\leq j'\leq f-1$ such that $k_{j'}<p-1-r_{j'},$ 
\item[{\rm (b)}] $k_j=p-1-r_j$ for all $0\leq j\leq f-1,$
\item[{\rm (c)}] $k_j=p-1-r_j$ for $j\neq l$ and $k_l=p-r_l$ for some $0\leq l\leq f-1.$
\end{enumerate}
\item[{\rm(2)}] the possible powers of $\mu_{n-1},$ say $k = k_0+k_1p+\dots+k_{f-1}p^{f-1}$, in $b(\mu_0,\dots,\mu_{n-1})$ will satisfy one of the following three conditions:
\begin{enumerate}
\item[{\rm (a)}] there exists some $0\leq j'\leq f-1$ such that such that $k_{j'}<r_{j'},$ 
\item[{\rm (b)}] $k_j=r_j$ for all $0\leq j\leq f-1,$
\item[{\rm (c)}] $k_j=r_j$ for $j\neq l$ and $k_l=r_l+1$ for some $0\leq l\leq f-1.$
\end{enumerate}
\end{enumerate}
\end{lemma}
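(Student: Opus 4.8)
The plan is to follow the structure of the proof in Hendel's paper \cite[Lemma 2.9]{hen18}, adapting it to the Iwahori-Hecke setup. The starting point is to expand the hypothesis $\left(\begin{smallmatrix} 1 & -\varpi^{n-1}\\ 0 & 1\end{smallmatrix}\right)f_n - f_n \in (\Ker~T_{-1,0},\Ker~T_{1,2})$ by acting with the unipotent matrix on each of the two pieces $f_n'$ and $f_n''$. For $f_n'$, using the factorization $\left(\begin{smallmatrix}\varpi^n & \mu\\ 0 & 1\end{smallmatrix}\right) = \left(\begin{smallmatrix}\varpi^{n-1} & [\mu]_{n-1}\\ 0 & 1\end{smallmatrix}\right)\left(\begin{smallmatrix}\varpi & [\mu_{n-1}]\\ 0 & 1\end{smallmatrix}\right)$ and the multiplicative-representative addition formula (Lemma \ref{witt vectors sum}), the left multiplication by $\left(\begin{smallmatrix}1 & -\varpi^{n-1}\\ 0 & 1\end{smallmatrix}\right)$ only affects the last digit $\mu_{n-1}$, replacing it by $\mu_{n-1}-1$ up to an element of $I(1)$ on the right (which acts trivially on these $I(1)$-invariant configurations). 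A parallel computation handles $f_n''$. Thus the difference $\left(\begin{smallmatrix}1 & -\varpi^{n-1}\\ 0 & 1\end{smallmatrix}\right)f_n - f_n$ is again a combination of vectors of the two basic shapes, with the coefficient of $\left[\left(\begin{smallmatrix}\varpi^n & \mu\\ 0 & 1\end{smallmatrix}\right),1\right]$ equal to $a(\mu_0,\dots,\mu_{n-2},\mu_{n-1}-1)-a(\mu_0,\dots,\mu_{n-1})$ and similarly for the $b$-part.

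Next I would fix all variables $\mu_0,\dots,\mu_{n-2}$ and regard $a$ and $b$ as polynomials in the single variable $\mu_{n-1}$, writing $a = \sum_k a_k \mu_{n-1}^k$ and $b = \sum_k b_k\mu_{n-1}^k$ with $0\le k\le q-1$ (legitimate by Lemma \ref{coeff predicting}). Now apply Lemma \ref{kernel condition}: since $\left(\begin{smallmatrix}1 & -\varpi^{n-1}\\ 0 & 1\end{smallmatrix}\right)f_n - f_n$ lies in $(\Ker~T_{-1,0},\Ker~T_{1,2})$ and the two basic shapes lie respectively on opposite sides of the tree with $\left[g^0_{n,\mu},1\right]$-type vectors landing in $\Ker~T_{1,2}$ only and the $w$-type vectors in $\Ker~T_{-1,0}$ only, the $\left[g^0_{n,\mu},1\right]$-part of the difference must itself lie in $\Ker~T_{1,2}$ and the $w$-part in $\Ker~T_{-1,0}$. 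By Lemma \ref{kernel condition} (1), after substituting $\mu_{n-1}\mapsto\mu_{n-1}-1$ and subtracting, the exponents $k$ of $\mu_{n-1}$ that survive in $a(\mu_0,\dots,\mu_{n-2},\mu_{n-1}-1)-a(\mu_0,\dots,\mu_{n-1})$ must all satisfy $0\le k\le q-2-r$ or ($k>q-1-r$ with $k_j<p-1-r_j$ for some $0\le j\le f-2$). Translating: a power $\mu_{n-1}^K$ present in $a$ itself with $K=K_0+\dots+K_{f-1}p^{f-1}$ produces, via the binomial expansion of $(\mu_{n-1}-1)^K$, lower powers $\mu_{n-1}^k$ with nonzero coefficient exactly when $\binom{K}{K-k}\not\equiv 0$, i.e.\ by Lucas (Theorem \ref{lucas}, Corollary \ref{prime divides bio coeff}) when the base-$p$ digits of $k$ are digitwise $\le$ those of $K$; demanding that \emph{every} such $k$ (with $k<K$) fall into the ``allowed kernel'' range forces $K$ into one of the three listed forms (a), (b), (c). This digit-chasing is the combinatorial heart of the argument, and is precisely the step where the hypotheses $0<r_j<p-1$ are used to guarantee that $p-1-r_j$ and $r_j$ lie strictly between $0$ and $p-1$, so that ``$k_j=p-1-r_j$ exactly'' and ``$k_l=p-r_l$'' are genuinely different and meaningful conditions.

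Concretely, to pin down the three cases one argues as follows: if there is some digit $j'$ with $K_{j'} < p-1-r_{j'}$, then $K$ itself already satisfies a kernel condition of the type in Lemma \ref{kernel condition} (1) and we are in case (a). Otherwise $K_j \geq p-1-r_j$ for all $j$. If moreover $K_j = p-1-r_j$ for all $j$ we are in case (b). The remaining possibility is that $K_j > p-1-r_j$ for at least one $j$; one then shows, by exhibiting a suitable $k<K$ with $k$ digitwise below $K$ but \emph{not} satisfying any allowed-kernel condition (so that its coefficient in the difference would be forced to vanish, a contradiction unless there is exactly one such excess digit and it exceeds by exactly one, putting us in case (c)), that the only surviving option is $K_j = p-1-r_j$ for $j\ne l$ and $K_l = p-r_l$. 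The argument for $b(\mu_0,\dots,\mu_{n-1})$ is identical with Lemma \ref{kernel condition} (2) in place of (1) and $r_j$ in place of $p-1-r_j$ throughout, giving conditions (a), (b), (c) of part (2). The main obstacle I anticipate is bookkeeping in this last digitwise case analysis — keeping careful track of which exponents $k < K$ are both ``reachable'' via a nonzero binomial coefficient and ``forbidden'' by the kernel conditions, and checking that no cancellation among different fixed values of $\mu_0,\dots,\mu_{n-2}$ can rescue a bad exponent (this is handled exactly because, as in Proposition \ref{linear independence}, the vectors indexed by distinct truncations $[\mu]_{n-1}$ sit on distinct parts of the tree and cannot cancel).
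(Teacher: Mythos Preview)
Your approach is essentially the same as the paper's: compute the action of $g=\left(\begin{smallmatrix}1 & -\varpi^{n-1}\\0 & 1\end{smallmatrix}\right)$, observe that $gf_n'-f_n'$ and $gf_n''-f_n''$ cannot interact (opposite edge orientations), expand $\Delta a = a([\mu]_{n-1},\mu_{n-1}+1)-a([\mu]_{n-1},\mu_{n-1})$ via Lucas, and then use Lemma~\ref{kernel condition} digitwise to rule out bad exponents.

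The one genuine gap is in your treatment of cancellation. You correctly flag cancellation as ``the main obstacle,'' but you locate it in the wrong place: you worry about different truncations $[\mu]_{n-1}$, which (as you say) sit on distinct cosets and cannot interfere. The real danger is that, for a \emph{fixed} truncation, several monomials $\mu_{n-1}^K$, $\mu_{n-1}^{K'}$, \dots\ in $a$ can all contribute to the same lower power $\mu_{n-1}^{k}$ in $\Delta a$, so the coefficient of $\mu_{n-1}^{k}$ is $\sum_{K>k}a_K\binom{K}{k}(\pm 1)$ and could vanish even though some individual $\binom{K}{k}\not\equiv 0\pmod p$. Treating each $K$ in isolation, as you do, does not address this.

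The paper's fix (inherited from \cite[Lemma~3.13]{hen18}) is a maximality trick: among the ``bad'' exponents, choose $k$ so that no other monomial $\mu_{n-1}^{k'}$ in $a$ satisfies $k_j\le k'_j$ for all $j$. Then set $k' = k - p^{j_1}$ for a suitably chosen digit $j_1$ with $k_{j_1}>p-1-r_{j_1}$. Any other monomial $K''$ contributing to $\mu_{n-1}^{k'}$ in $\Delta a$ would need $k'_j\le K''_j$ for all $j$ by Lucas, hence $k_j\le K''_j$ for all $j$, contradicting maximality of $k$. So $\mu_{n-1}^{k'}$ genuinely survives in $\Delta a$ with nonzero coefficient $\binom{k_{j_1}}{k_{j_1}-1}\not\equiv 0$, and then Lemma~\ref{kernel condition}(1) yields the contradiction. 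Once you insert this maximality step, your outline is complete and matches the paper.
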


\begin{proof}[Proof of Lemma \ref{possible mu power in kk}]
We will prove $(1)$ and the proof of $(2)$ is similar. Suppose $(1)$ does not hold. Then there exists $k$ such that $k_j\geq p-1-r_j$ for all $0\leq j\leq f-1$ with 
\[k_{j_0}>p-1-r_{j_0} \mbox{~for~some~} 0 \leq j_0 \leq f-1 ~\&~ k \neq (p-r_{j_0})p^{j_0} +\sum\limits_{\substack{j_0\neq j=0}}^{f-1}(p-1-r_j)p^j.\]
Then either there exists $j_1$ with $j_1\neq j_0$ such that $k_{j_1}>p-1-r_{j_1}$ or 
\[k= k_{j_0}p^{j_0}+\sum\limits_{\substack{j_0\neq j=0}}^{f-1}(p-1-r_j)p^j\]
with $k_{j_0}>p-r_{j_0}.$ Choose $k$ with the above property such that there is no other monomial $\mu_{n-1}^{k'}$ in $a(\mu_0,\dots,\mu_{n-1})$ with $k_j\leq k'_j$ for all $0\leq j\leq f-1$. Since a polynomial is of finite degree, such a $k$ exists. Let 
\[g=\left(\begin{array}{cc}
1 & -\varpi^{n-1}\\
0 &   1
\end{array}\right).\]
We have
\[gf_n-f_n=(gf_n'-f_n')+(gf_n''-f_n'')\in(\Ker~T_{-1,0},\Ker~T_{1,2}).\]
Note that,
\begin{align*}
gf_n^\prime -f_n^\prime &=\sum\limits_{\substack{\mu\in I_n}}\left[a([\mu]_{n-1},\mu_{n-1}+1)-a([\mu]_{n-1},\mu_{n-1})\right]\left[\left(\begin{array}{cc}
\varpi^n & \mu\\
0  & 1
\end{array}\right),1\right]
\end{align*}      
and 
\begin{align*}
& gf_n''-f_n'' \\
&=\sum\limits_{\substack{\mu\in I_n}}\left[b([\mu]_{n-1},\mu_{n-1}+1)-b([\mu]_{n-1},\mu_{n-1})\right] \left[\left(\begin{array}{cc}
\varpi^{n-1}& [\mu]_{n-1}\\
0      & 1
\end{array}\right)
\left(\begin{array}{cc}
1& [\mu_{n-1}]\\
0 & 1
\end{array}\right)w,1\right].
\end{align*}

Let \[\Delta a=a([\mu]_{n-1},\mu_{n-1}+1)-a([\mu]_{n-1},\mu_{n-1})\]
considered as a polynomial in $\mu_{n-1}$ with coefficients in $\overline{\mathbb F}_p[\mu_0,\dots,\mu_{n-2}]$. By Theorem \ref{lucas}, we have 
\[(\mu_{n-1}+1)^k-\mu_{n-1}^k \equiv \sum\limits_{\substack{i=0}}^{k-1}\prod_{j=0}^{f-1}{k_j\choose i_j}\mu_{n-1}^i \mod p.\] 

Now if there exists $j_1$ with $j_1\neq j_0$ such that $k_{j_1}>p-1-r_{j_1},$ take 
\[k'= (k_{j_1}-1)p^{j_1}+\sum\limits_{\substack{j_1\neq j=0}}^{f-1}k_jp^j.\]
The coefficient of $\mu_{n-1}^{k'}$ in $\Delta a$ is 
\[{k\choose k'}={k_{j_1}\choose k_{j_1}-1}\not\equiv 0 \mod p\]
by Theorem \ref{lucas} and Corollary \ref{prime divides bio coeff}. Note that the term involving $\mu_{n-1}^{k'}$ in $gf_n'-f_n'$ cannot get cancelled by any other term in $gf_n-f_n$. Indeed, it cannot get cancelled with any other term in $gf_n^\prime-f_n^\prime$ because of the choice of $k$ and anyway no term in $gf_n^\prime-f_n^\prime$ can get cancelled with a term in $gf_n^{\prime\prime}-f_n^{\prime\prime}$ (pictorially they represent edges of opposite orientation on the tree of ${\rm SL}_2(F)$). So this term involving $\mu_{n-1}^{k'}$ must be there in $(\Ker~T_{-1,0},\Ker~T_{1,2}),$ but then Lemma \ref{kernel condition} (1) would imply that there exists some $0\leq l\leq f-1$ such that $k'_l<p-1-r_l,$ which contradicts our assumption. So $k$ must be of the form \[k= k_{j_0}p^{j_0}+\sum\limits_{\substack{j_0\neq j=0}}^{f-1}(p-1-r_j)p^j\]
with $k_{j_0}>p-r_{j_0}.$ Taking 
\[k'= (k_{j_0}-1)p^{j_0}+\sum\limits_{\substack{j_0\neq j=0}}^{f-1}(p-1-r_j)p^j,\]
and using the same argument as in the previous case, we arrive at a contradiction.
\end{proof}

\begin{remark}\label{on hendel's paper}
The idea of choosing $k$ as in Lemma \ref{possible mu power in kk} is already employed by Hendel in \cite[Lemma 3.13]{hen18}.
\end{remark}

Now we state one more lemma whose main idea of proof also comes from \cite[Lemma 3.13]{hen18}. In what follows, $B(t)$ denotes the ball of radius $m$ on the tree of ${\rm SL}_2(F)$ with center at the vertex representing the trivial coset $G/KZ$. Explicitly it consists of linear combinations of vectors of the form  
\[B^0(t) = \left\{[g_{n,\mu}^0,1], \left[g_{n-1,[\mu]_{n-1}}^0 
\left(\begin{array}{cc}
1& [\mu_{n-1}] \\
0 & 1
\end{array}\right)w,1\right]\right\}_{n \leq t},\]
and
\[B^1(t) =\left\{[g_{n-1,[\mu]_{n-1}}^1w,1], \left[g_{n-2,[\mu]_{n-2}}^1w 
\left(\begin{array}{cc}
1& [\mu_{n-2}] \\
0 & 1
\end{array}\right)w,1\right]\right\}_{n \leq t},\]
where $\mu = [\mu_0]+[\mu_1]\varpi+\dots+[\mu_{n-1}]\varpi^{n-1}  \in I_n$.

\begin{lemma}\label{reduce to one variable for kk}
Let \[f_n'=\sum\limits_{\substack{\mu\in I_n}}\sum \limits_{\substack{l=0}}^{f-1}P_l([\mu]_{n-1})\mu_{n-1}^{q-1-r+p^l}\left[g^0_{n,\mu},1\right]\] and \[f_n''=\sum\limits_{\substack{\mu\in I_n}}\sum\limits_{\substack{l=0}}^{f-1}Q_l([\mu]_{n-1})\mu_{n-1}^{r+p^l}\left[g^0_{n-1,[\mu]_{n-1}}\left(\begin{array}{cc}
1& [\mu_{n-1}]\\
0 & 1
\end{array}\right)w,1\right],\] where $P_l([\mu]_{n-1})$ and $Q_l([\mu]_{n-1})$ are polynomials in $\mu_0,\dots,\mu_{n-2}$. Let $f_n=f_n'+f_n''$. Let $f=f_n+f'$ be such that $f'\in B(n-1)$ and 
\[\left(\begin{array}{cc}
1 & -\varpi^{n-m}\\
0 &   1
\end{array}\right)f-f\in (\Ker~T_{-1,0},\Ker~T_{1,2}),\] for all $1\leq m\leq n-1.$
Then we have
\[f_n'=\sum\limits_{\substack{\mu\in I_n}}\sum \limits_{\substack{l=0}}^{f-1}a_l\mu_{n-1}^{q-1-r+p^l}\left[g^0_{n,\mu},1\right]\] and \[f_n''=\sum\limits_{\substack{\mu\in I_n}}\sum\limits_{\substack{l=0}}^{f-1}b_l\mu_{n-1}^{r+p^l}\left[g^0_{n-1,[\mu]_{n-1}}\left(\begin{array}{cc}
1& [\mu_{n-1}]\\
0 & 1
\end{array}\right)w,1\right],\] where $a_l$ and $b_l$ are constants. 
\end{lemma}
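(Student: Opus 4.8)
\textbf{Proof proposal for Lemma \ref{reduce to one variable for kk}.}

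The plan is to show that each coefficient polynomial $P_l([\mu]_{n-1})$ and $Q_l([\mu]_{n-1})$ is forced to be constant by imposing the invariance hypothesis for successive radii $m = 1, 2, \dots, n-1$, and running a downward induction on the ``depth'' $n - m$ at which we probe the vector. The key mechanism is the same translation-by-$\varpi^{n-m}$ trick used in Lemma \ref{possible mu power in kk}: conjugating $g_{n,\mu}^0$ (and the $w$-twisted representative) by $\mathrm{diag}$-unipotent elements of the form $\left(\begin{smallmatrix} 1 & -\varpi^{n-m} \\ 0 & 1 \end{smallmatrix}\right)$ shifts $\mu_{m-1} \mapsto \mu_{m-1} + (\text{lower order})$, so that the difference $gf - f$ isolates, modulo deeper terms and modulo $B(n-1)$, a finite-difference operator applied to the coefficient polynomials in the variable $\mu_{m-1}$. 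Since $gf - f$ lies in $(\Ker~T_{-1,0}, \Ker~T_{1,2})$, Lemma \ref{kernel condition} constrains which monomials in $\mu_{n-1}$ may survive, exactly as in the proof of Lemma \ref{possible mu power in kk}.

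First I would set up the induction: assume that for all $m' < m$ the coefficient of $\mu_{n-1}^{q-1-r+p^l}$ (resp.\ $\mu_{n-1}^{r+p^l}$) in $f_n'$ (resp.\ $f_n''$) has no dependence on $\mu_0, \dots, \mu_{m'-1}$, and prove the same for $m' = m$. Applying $g = \left(\begin{smallmatrix} 1 & -\varpi^{n-m} \\ 0 & 1 \end{smallmatrix}\right)$ and factoring as in \eqref{eqn-k}, I would extract from $gf - f$ the part living on the edges $g_{n,\mu}^0$ at full depth $n$; by the argument that terms of opposite tree-orientation cannot cancel, and that terms in $B(n-1)$ cannot cancel a depth-$n$ term, this part alone must lie in $\Ker~T_{1,2}$ (and symmetrically the $w$-twisted part in $\Ker~T_{-1,0}$). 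Now Lemma \ref{kernel condition} (1) says a sum $\sum_\mu (\cdots)\mu_{n-1}^{i_{n-1}}[g_{n,\mu}^0,1]$ can be in $\Ker~T_{1,2}$ only when $i_{n-1} \le q-2-r$ or when some digit $i_{n-1,j} < p-1-r_j$; but the exponents $q-1-r+p^l$ appearing in $f_n'$ have all digits $\ge p-1-r_j$ (cf.\ Remark \ref{rmk-f-1} and the digit analysis in Lemma \ref{ball two invariants}), so the only way the difference can land in the kernel is for the $\mu_{m-1}$-finite-difference of $P_l$ to vanish identically — forcing $P_l$ to be independent of $\mu_{m-1}$. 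Here I would invoke Lemma \ref{coeff predicting} to legitimately treat $P_l$ as an honest polynomial of bounded degree in each $\mu_i$, so that ``finite difference vanishes'' does imply ``independent of that variable'' over $\overline{\mathbb F}_p$. Iterating over $m = 1, \dots, n-1$ strips away dependence on $\mu_0, \dots, \mu_{n-2}$ entirely, leaving $P_l$ (resp.\ $Q_l$) constant.

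The main obstacle I anticipate is bookkeeping the interaction between the different exponents $p^l$ as $l$ ranges over $0, \dots, f-1$: after the change of variables $\mu_{m-1} \to \mu_{m-1} + (\text{Witt-addition correction})$, the binomial expansion mixes the $f$ distinct monomials $\mu_{n-1}^{q-1-r+p^l}$ with various lower powers of $\mu_{m-1}$, and one must check that no unwanted cross-cancellation occurs among these before applying Lemma \ref{kernel condition}. This is handled by the same ``choose the extremal surviving monomial'' device as in Lemma \ref{possible mu power in kk} and Remark \ref{on hendel's paper}: pick the monomial in $\mu_{m-1}$ of maximal relevant degree whose coefficient is nonzero, observe via Lucas (Theorem \ref{lucas}) and Corollary \ref{prime divides bio coeff} that the finite-difference operator produces a nonzero coefficient one degree down, note this cannot be cancelled elsewhere, and derive the contradiction with Lemma \ref{kernel condition}. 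The rest is the routine matrix algebra of \eqref{eqn-k} and \eqref{eqn-k'}, which I would not spell out in full.
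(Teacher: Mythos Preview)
Your overall strategy --- use the translations $\left(\begin{array}{cc}1 & -\varpi^{n-m}\\0 & 1\end{array}\right)$ to produce finite-difference equations on the coefficient polynomials, then invoke Lemma~\ref{kernel condition} to force vanishing --- matches the paper's. But there is a genuine gap in the induction you set up, and it is not merely an indexing slip.

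The element $g=\left(\begin{array}{cc}1 & -\varpi^{n-m}\\0 & 1\end{array}\right)$ acting on $g_{n,\mu}^0$ shifts the digit $\mu_{n-m}$ (not $\mu_{m-1}$), and by Lemma~\ref{witt vectors sum} the Witt-vector carries this creates propagate \emph{upward}: they perturb $\mu_{n-m+1},\dots,\mu_{n-1}$ while leaving $\mu_0,\dots,\mu_{n-m-1}$ untouched. Consequently, to get a clean finite-difference relation in the single variable $\mu_{n-m}$ you must already know that $P_l$ is independent of $\mu_{n-m+1},\dots,\mu_{n-2}$; otherwise the carry corrections feed back into the argument of $P_l$ and the difference $gf_n'-f_n'$ is not of the desired shape
\[
\bigl[P_l(\dots,\mu_{n-m}+1)-P_l(\dots,\mu_{n-m})\bigr]\mu_{n-1}^{q-1-r+p^l}+(\text{lower in }\mu_{n-1}).
\]
The paper's induction is therefore \emph{top-down}: one starts from the trivial observation that $P_l$ does not involve $\mu_{n-1}$ and eliminates $\mu_{n-2},\mu_{n-3},\dots,\mu_0$ in that order, so that at step $m$ the inductive hypothesis kills exactly the variables the carries would otherwise disturb. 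Your bottom-up hypothesis (independence of $\mu_0,\dots,\mu_{m-2}$) gives no control over the carry terms and the argument stalls at the first step.

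The issue you single out as the ``main obstacle'' --- cross-cancellation among the $f$ exponents $q-1-r+p^l$ --- is in fact benign. After expanding $(\mu_{n-1}-c_{n-1})^{q-1-r+p^l}$ and reducing modulo $\Ker~T_{1,2}$ via Lemma~\ref{kernel condition}(1) and Corollary~\ref{prime divides bio coeff}, only the top term $\mu_{n-1}^{q-1-r+p^l}$ and the term $\mu_{n-1}^{q-1-r}$ survive; the latter is pushed into $B(n-1)$ by Lemma~\ref{modulo kk}(1), and the former carries the pairwise distinct exponents $q-1-r+p^l$, so no interference occurs. Once the induction is run in the correct (downward) direction, the rest of your sketch goes through exactly as the paper does.
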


\begin{proof}[Proof of Lemma \ref{reduce to one variable for kk}]
We do the proof only for $f_n^\prime$, as the case of $f_n''$ is similar. 
The proof is by induction on $n$. Note that $P_l([\mu]_{n-1})$ is independent of $\mu_{n-1}.$ Suppose it is independent of $\mu_{n-1},\dots,\mu_{n-m+1}.$ Then
\[f_n'=\sum\limits_{\substack{\mu\in I_n}}\sum \limits_{\substack{l=0}}^{f-1}P_l([\mu]_{n-m},\mu_{n-m})\mu_{n-1}^{q-1-r+p^l}\left[g^0_{n,\mu},1\right]\]
We show that it is independent of $\mu_{n-m}$. It is given to us that 
\begin{align*}
\left(\begin{array}{cc}
1 & -\varpi^{n-m}\\
0 &   1
\end{array}\right)f-f &=\left[\left(\begin{array}{cc}
1 & -\varpi^{n-m}\\
0 &   1
\end{array}\right)f_n-f_n\right]+\left[\left(\begin{array}{cc}
1 & -\varpi^{n-m}\\
0 &   1
\end{array}\right)f'-f'\right]\\
& \in  (\Ker~T_{-1,0},\Ker~T_{1,2}).
\end{align*}
Now,
\begin{align*}
\left(\begin{array}{cc}
1 & -\varpi^{n-m}\\
0 & 1
\end{array}\right)
\left(\begin{array}{cc}
\varpi^n & \displaystyle{\sum_{i=0}^{n-1}} [\mu_i]\varpi^i \\
0   & 1
\end{array}\right) &=\left(\begin{array}{cc}
\varpi^n & [\mu_0]+\dots+[\mu_{n-1}]\varpi^{n-1}-\varpi^{n-m}\\
0   & 1
\end{array}\right)
\end{align*}
and this equals
\begin{align*}
\left(\begin{array}{cc}
\varpi^n & \displaystyle{\sum_{i=0}^{n-m-1}} [\mu_i]\varpi^i+[\mu_{n-m}-1]\varpi^{n-m}+[\mu'_{n-m+1}]\varpi^{n-m+1}+\dots+[\mu'_{n-1}]\varpi^{n-1}\\
0   & 1
\end{array}\right)\\
\end{align*} 
where $\mu'_k=\mu_k+c_k(\mu_{n-m},\dots,\mu_{n-2})$ for $n-m+1\leq k\leq n-1.$ 
	
Note that the transformation $\mu'_k\mapsto \mu_k-c_k(\mu_{n-m},\dots,\mu_{n-2})$ does not affect the variables $\mu_k$ for $n-m+1\leq k\leq n-1$ in $P_l([\mu]_{n-1}),$ as it is independent of these variables. This transformation together with $\mu_{n-m}\mapsto \mu_{n-m}+1$ gives 
\[\left(\begin{array}{cc}
1 & -\varpi^{n-m}\\
0 & 1
\end{array}\right)f_n'=
\sum\limits_{\substack{\mu\in I_n}}\sum \limits_{\substack{l=0}}^{f-1}P_l([\mu]_{n-m},\mu_{n-m}+1)(\mu_{n-1}-c_{n-1})^{q-1-r+p^l} [g_{n,\mu}^0,1].\] 
In the above expression, by $c_{n-1}$ we mean $c_{n-1}(\mu_{n-m},\dots,\mu_{n-2})$. 
Now,
\begin{align*}
\left(\begin{array}{cc}
1 & -\varpi^{n-m}\\
0 & 1
\end{array}\right)f_n'-f_n' & =\sum\limits_{\substack{\mu\in I_n}}\sum \limits_{\substack{l=0}}^{f-1} \alpha(\mu,l)[g^0_{n,\mu},1],
\end{align*}
where 
\[\alpha(\mu,l)=\left[P_l([\mu]_{n-m},\mu_{n-m}+1)(\mu_{n-1}-c_{n-1})^{q-1-r+p^l}-P_l([\mu]_{n-m},\mu_{n-m})\mu_{n-1}^{q-1-r+p^l}\right].\]
Thus,
\begin{align*}
& \left(\begin{array}{cc}
1 & -\varpi^{n-m}\\
0 & 1
\end{array}\right)f_n'-f_n' \\
& = \sum\limits_{\substack{\mu\in I_n}}\sum \limits_{\substack{l=0}}^{f-1}\left[P_l([\mu]_{n-m},\mu_{n-m}+1)-P_l([\mu]_{n-m},\mu_{n-m})\right]\mu_{n-1}^{q-1-r+p^l}[g^0_{n,\mu},1]\\
&+\sum\limits_{\substack{\mu\in I_n}}\sum \limits_{\substack{l=0}}^{f-1}\sum\limits_{\substack{i=0}}^{q-1-r+p^l-1} \beta(\mu,l,i) [g^0_{n,\mu},1],
\end{align*}
where
\[\beta(\mu,l,i)= P_l([\mu]_{n-m},\mu_{n-m}+1)(-1)^i{q-1-r+p^l\choose i}(-c_{n-1})^{q-1-r+p^l-i}\mu_{n-1}^i.\]
Now we read this modulo $\Ker~ T_{1,2}$. Thus, we get
\begin{align*}
& \left(\begin{array}{cc}
1 & -\varpi^{n-m}\\
0 & 1
\end{array}\right)f_n'-f_n' \\
& = \sum\limits_{\substack{\mu\in I_n}}\sum \limits_{\substack{l=0}}^{f-1}\left[P_l([\mu]_{n-m},\mu_{n-m}+1)-P_l([\mu]_{n-m},\mu_{n-m})\right]\mu_{n-1}^{q-1-r+p^l}[g^0_{n,\mu},1]\\
&+\sum\limits_{\substack{\mu\in I_n}}\sum \limits_{\substack{l=0}}^{f-1}P_l([\mu]_{n-m},\mu_{n-m}+1){q-1-r+p^l\choose q-1-r}(-c_{n-1})^{p^l}\mu_{n-1}^{q-1-r}[g^0_{n,\mu},1],
\end{align*}
by Corollary \ref{prime divides bio coeff} and Lemma \ref{kernel condition} (1), exactly as we have argued before in the proof of Lemma \ref{ball two invariants}. Now by Lemma \ref{modulo kk} (1), it follows that, modulo $(\Ker~T_{-1,0},\Ker~T_{1,2})$, we have
\begin{align*}
& \left(\begin{array}{cc}
1 & -\varpi^{n-m}\\
0 & 1
\end{array}\right)f_n'-f_n' \\
& = \sum\limits_{\substack{\mu\in I_n}}\sum \limits_{\substack{l=0}}^{f-1}\left[P_l([\mu]_{n-m},\mu_{n-m}+1)-P_l([\mu]_{n-m},\mu_{n-m})\right]\mu_{n-1}^{q-1-r+p^l}[g^0_{n,\mu},1]+g_{n-1}
\end{align*}
where $g_{n-1} \in B(n-1)$. As $r_l\neq 0,$ by Lemmas \ref{kernel condition} (1) and \ref{modulo kk} (1) we have 
\[\sum\limits_{\substack{\mu\in I_n}}\mu_{n-1}^{q-1-r+p^l}\left[g^0_{n,\mu},1\right]\notin (\Ker~T_{-1,0},\Ker~T_{1,2}).\] Also the term involving $\mu_{n-1}^{q-1-r+p^l}$ cannot get cancelled by any other term in the expression \[\left(\begin{array}{cc}
1 & -\varpi^{n-m}\\
0 &   1
\end{array}\right)f-f.\] So it follows that \[P_l([\mu]_{n-m},\mu_{n-m}+1)-P_l([\mu]_{n-m},\mu_{n-m})=0.\] Hence $P_l([\mu]_{n-1})$ is independent of $\mu_{n-m}.$ Therefore, by induction $P_l([\mu]_{n-1})$ is a constant. 
\end{proof}

\subsection{Proof of Theorem \ref{thm-main}}

Clearly the vectors $\left[{\rm Id}, 1\right]$ and $\left[\beta, 1\right]$ are fixed by $I(1).$ By Lemmas \ref{enough to look at ball one for kk} and \ref{ball two invariants} and Remark \ref{same computation} the vectors in $\mathcal{S}_2$ and $\mathcal T_2$ are $I(1)$-invariant modulo $(\Ker~T_{-1,0},\Ker~T_{1,2})$ except for the case when both $e=1$ and $f=1.$ By Remark \ref{rmk-li}, the set
$\mathcal{S}_2 \cup \{[{\rm Id},1],[\beta,1]\} \cup \mathcal T_2$ is linearly independent. 

Now let $f\in {\rm ind}_{IZ}^G \chi_r$ be an $I(1)$-invariant of
\[\pi_r = \frac{{\rm ind}_{IZ}^G\chi_r}{(\Ker~T_{-1,0},\Ker~T_{1,2})}. \]
We write 
\[f=f^0+f^1\]
where $f^0$ (resp. $f^1$) is a linear combination of vectors on the zero side (resp. one side) of the tree of ${\rm SL}_2(F)$. By this, we mean $f^0$ is a linear combination of vectors of the form
\[[ g_{n,\mu}^0,1], \left[g_{n-1,[\mu]_{n-1}}^0 
\left(\begin{array}{cc}
1& [\mu_{n-1}] \\
0 & 1
\end{array}\right)w,1\right]\]
and $f^1$ is a linear combination of vectors of the form
\[ [g_{n-1,[\mu]_{n-1}}^1w,1], \left[g_{n-2,[\mu]_{n-2}}^1w 
\left(\begin{array}{cc}
1& [\mu_{n-2}] \\
0 & 1
\end{array}\right)w,1\right].\]

Then, \[gf^i-f^i\in (\Ker~T_{-1,0}, \Ker~T_{1,2}),\] for all $i\in \{0,1\}$ and $g\in I(1)$. Since $\beta f^1$ is a linear combination of vectors on the zero side and $\beta$ normalizes $I(1)$, without loss of generality, we may assume $f=f^0$. Write
\[f=f_n+f'\]
with $f_n\neq 0$, $f'\in B(n-1)$, for $n$ maximal. Now, 
\[f_n=\sum\limits_{\substack{\mu\in I_n}}a_{{\mu}}\left[g^0_{n,\mu},1\right]+\sum\limits_{\substack{\mu\in I_n}}b_{{\mu}}\left[g^0_{n-1,[\mu]_{n-1}}\left(\begin{array}{cc}
1& [\mu_{n-1}]\\
0 & 1
\end{array}\right)w,1\right],\]
where $\mu=[\mu_0]+[\mu_1]\varpi+\dots+[\mu_{n-1}]\varpi^{n-1}$ and $a_{{\mu}}, b_{{\mu}}\in \overline{\mathbb F}_p.$ By Lemma \ref{coeff predicting}, the coefficients $a_{{\mu}}$ and $b_{{\mu}}$ can be replaced by the polynomials $a(\mu_0,\dots,\mu_{n-1})$ and  $b(\mu_0,\dots,\mu_{n-1})$ respectively, where each $\mu_i$ has maximum degree $q-1.$ Write 
\[f_n=f_n'+f_n'',\]
where 
\[f_n'=\sum\limits_{\substack{\mu\in I_n}}\sum\limits_{\substack{i}}
a(i_0,i_1,\dots,i_{n-1})\mu_0^{i_0}\dots\mu_{n-1}^{i_{n-1}}
\left[g^0_{n,\mu},1\right],\] 
and 
\[f_n''=\sum\limits_{\substack{\mu\in I_n}}\sum\limits_{\substack{j}}
b(j_0,j_1,\dots,j_{n-1})\mu_0^{j_0}\dots\mu_{n-1}^{j_{n-1}}
\left[g^0_{n-1,[\mu]_{n-1}}
\left(\begin{array}{cc}
1& [\mu_{n-1}]\\
0 & 1
\end{array}\right)w,1\right].\]

Let
\[g'=\left(\begin{array}{cc}
1 & -\varpi^{n-1}\\
0 & 1
\end{array}\right)\in I(1).\]
Since $f'$ belongs in $B(n-1)$, it is easy to check that $g'$ fixes $f'$. This gives
\[g'f_n-f_n\in (\Ker~T_{-1,0},\Ker~T_{1,2}).\] Now Lemma \ref{kernel condition} (1) together with Lemma \ref{possible mu power in kk} (1) gives 
\begin{align*}
f_n' &= \sum\limits_{\substack{\mu\in I_n}}\sum\limits_{\substack{i}}a(i_0,\dots,i_{n-2},q-1-r)\mu_0^{i_0}\dots\mu_{n-1}^{q-1-r}\left[g^0_{n,\mu},1\right] \\
&+ \sum\limits_{\substack{\mu\in I_n}}\sum\limits_{\substack{l=0}}^{f-1}a_l([\mu]_{n-1})\mu_{n-1}^{q-1-r+p^l}\left[g^0_{n,\mu},1\right],
\end{align*}
which in turn implies that 
\begin{align*}
& f_n'-\sum\limits_{\substack{\mu\in I_n}}\sum\limits_{\substack{l=0}}^{f-1}a_l([\mu]_{n-1})\mu_{n-1}^{q-1-r+p^l}\left[g^0_{n,\mu},1\right]\\
&=\sum\limits_{\substack{\mu_0,\dots,\mu_{n-1}}}\sum\limits_{\substack{i_0,\dots,i_{n-2}}}a(i_0,\dots,q-1-r)\mu_0^{i_0}\dots\mu_{n-1}^{q-1-r}\left[g^0_{n,\mu},1\right]
\end{align*}
which modulo $\Ker~ T_{1,2}$ equals
\begin{align*}
\sum\limits_{\substack{\mu_0,\dots,\mu_{n-2}}}\sum\limits_{\substack{i_0,\dots,i_{n-2}}}a(i_0,\dots,q-1-r)\mu_0^{i_0}\dots\mu_{n-2}^{i_{n-2}} 
\left[g^0_{n-2,[\mu]_{n-2}}\left(\begin{array}{cc}
1& [\mu_{n-2}]\\
0& 1
\end{array}\right)w,1\right],                        
\end{align*}  
by Lemma \ref{modulo kk} (1). This vector belongs to $B(n-1)$ which we call $g_{n-1}^\prime$. We get 
\[f_n'=\sum\limits_{\substack{\mu\in I_n}}\sum\limits_{\substack{l=0}}^{f-1}a_l([\mu]_{n-1})\mu_{n-1}^{q-1-r+p^l}\left[g^0_{n,\mu},1\right]+g_{n-1}'.\] 
Similarly, working with $f_n'',$ we get
\[f_n''=\sum\limits_{\substack{\mu\in I_n}}\sum\limits_{\substack{l=0}}^{f-1}b_l([\mu]_{n-1})\mu_{n-1}^{r+p^l}\left[g_{n-1,[\mu]_{n-1}}^0 
\left(\begin{array}{cc}
1 & [\mu_{n-1}] \\
0 & 1
\end{array}\right)w,1\right]+g_{n-1}''\] for some $g_{n-1}''\in B(n-1)$, by Lemmas \ref{kernel condition} (2), \ref{possible mu power in kk} (2) and \ref{modulo kk} (2). 

For $1\leq m\leq n-1,$ we note that 
\[\left(\begin{array}{cc}
1 & -\varpi^{n-m}\\
0 &   1
\end{array}\right)\in I(1).\]
Using the condition 
\[\left(\begin{array}{cc}
1 & -\varpi^{n-m}\\
0 &   1
\end{array}\right)f-f\in (\Ker~T_{-1,0},\Ker~T_{1,2}),\] 
by Lemma \ref{reduce to one variable for kk}, we have 
\[f_n'=\sum\limits_{\substack{\mu\in I_n}}\sum\limits_{\substack{l=0}}^{f-1} a_{l,n}\mu_{n-1}^{q-1-r+p^l}\left[g^0_{n,\mu},1\right]+g_{n-1}'\]
and
\[f_n''=\sum\limits_{\substack{\mu\in I_n}}\sum\limits_{\substack{l=0}}^{f-1}b_{l,n}\mu_{n-1}^{r+p^l}\left[g_{n-1,[\mu]_{n-1}}^0 
\left(\begin{array}{cc}
1 & [\mu_{n-1}] \\
0 & 1
\end{array}\right)w,1\right]+g_{n-1}'',\] 
where $a_l$ and $b_l$ are constants. 

Hence $f_n$ takes the form
\[f_n=\sum\limits_{\substack{l=0}}^{f-1}a_{l,n}s_n^{q-1-r+p^l}+\sum\limits_{\substack{l=0}}^{f-1}b_{l,n}t
_n^{r+p^l}+g_{n-1},\]
where 
\[g_{n-1}=g_{n-1}'+g_{n-1}''\in B(n-1).\]
Thus it follows that \[f-\sum\limits_{\substack{l=0}}^{f-1}a_{l,n}s_n^{q-1-r+p^l}-\sum\limits_{\substack{l=0}}^{f-1}b_{l,n}t_n^{r+p^l}=g_{n-1}+f'\] is an $I(1)$-invariant vector modulo $(\Ker~T_{-1,0},\Ker~T_{1,2})$ in $B(n-1)$. 

Applying this argument on vectors in $B(n-1)$ and repeating this process, we get                
\begin{align*}
&f=\sum\limits_{\substack{l=0}}^{f-1}a_{l,n} s_n^{q-1-r+p^l}
+\sum\limits_{\substack{l=0}}^{f-1}b_{l,n}t_n^{r+p^l}
+\dots+\sum\limits_{\substack{l=0}}^{f-1}a_{l,2}s_2^{q-1-r+p^l}
+\sum\limits_{\substack{l=0}}^{f-1}b_{l,2}t_2^{r+p^l}
+f_1,
\end{align*}
where $f_1$ is an $I(1)$-invariant in $B(1)$. 
Write 
\[f_1=f_1'+f_1'',\]
where \[f_1'=\sum\limits_{\substack{\mu\in I_1}}\sum\limits_{\substack{i}}
a_i\mu^i\left[g^0_{1,\mu},1\right],\] 
and 
\[f_1''=\sum\limits_{\substack{\mu\in I_1}}\sum\limits_{\substack{j}}
b_j\mu^j
\left[\left(\begin{array}{cc}
1      & \mu\\
0      & 1
\end{array}\right)
w,1\right].\]
Using the action of 
\[u= \left(\begin{array}{cc}
1 & 1\\
0 & 1
\end{array}\right)\]
on $f_1$, by Lemma \ref{possible mu power in kk} (1), the possible powers $i$ of $\mu$ in $f_1'$ will satisfy either $0\leq i\leq q-1-r$ or $i=q-1-r+p^l$ for some $0\leq l\leq f-1.$ If $i=q-1-r+p^l,$ then \[\left(\begin{array}{cc}
1 & 1\\
0 & 1
\end{array}\right)f_1'-f_1'=\sum\limits_{\substack{\mu\in I_1}}\mu^{q-1-r}\left[g^0_{1,\mu},1\right]\in 
(\Ker~T_{-1,0},\Ker~T_{1,2}).\] 
This, by Lemma \ref{modulo kk} (1), gives $\left[\beta,1\right]\in (\Ker~T_{-1,0},\Ker~T_{1,2}),$ which is not possible. So we must have $0\leq i\leq q-1-r.$ Then, by Lemma \ref{kernel condition}(1) and Lemma \ref{modulo kk} (1), we have \[f_1'=\left[\beta,1\right]\mod (\Ker~T_{-1,0},\Ker~T_{1,2}).\] Similarly, by Lemmas \ref{kernel condition} (2) and \ref{possible mu power in kk} (2) and Lemma \ref{modulo kk} (2) , we can show that
\[f_1''=\left[{\rm Id}, 1\right]\mod (\Ker~T_{-1,0},\Ker~T_{1,2}).\]
Thus, we have
\begin{align*}
&f=\sum\limits_{\substack{l=0}}^{f-1}a_{l,n} s_n^{q-1-r+p^l}
+\sum\limits_{\substack{l=0}}^{f-1}b_{l,n}t_n^{r+p^l}+\dots \\
&+\sum\limits_{\substack{l=0}}^{f-1}a_{l,2}s_2^{q-1-r+p^l}
+\sum\limits_{\substack{l=0}}^{f-1}b_{l,2}t_2^{r+p^l}+ c\left[\beta,1\right]+d\left[{\rm I}d,1\right].
\end{align*}

Now assume $e=1$ and $f=1$. Let $f\in {\rm ind}_{IZ}^G\chi_r$ be an $I(1)$-invariant vector modulo $(\Ker~ T_{-1,0}, \Ker~ T_{1,2}).$ As in the previous case, we concentrate only on the zero side of the tree and assume that $f=f^0.$ We write $f=f_n+f'$ where $f_n\neq 0$ and $f'\in B(n-1).$ We further write $f_n=f_n'+f_n''$ where $f_n'$ and $f_n''$ are same as in the previous case. Following the steps in the previous case, we have 
\[f_n'=\sum\limits_{\substack{\mu\in I_n}} a_0\mu_{n-1}^{p-r}\left[g^0_{n,\mu},1\right]+g_{n-1}',\]
and
\[f_n''=\sum\limits_{\substack{\mu\in I_n}}b_0\mu_{n-1}^{r+1}\left[g_{n-1,[\mu]_{n-1}}^0 
\left(\begin{array}{cc}
1 & [\mu_{n-1}] \\
0 & 1
\end{array}\right)w,1\right]+g_{n-1}'',\] where $a_0$ and $b_0$ are constants and $g_{n-1}', g_{n-1}''\in B(n-1).$ Thus, 
\[f_n=\sum\limits_{\substack{\mu\in I_n}} a_0\mu_{n-1}^{p-r}\left[g^0_{n,\mu},1\right]+\sum\limits_{\substack{\mu\in I_n}}b_0\mu_{n-1}^{r+1}\left[g_{n-1,[\mu]_{n-1}}^0 
\left(\begin{array}{cc}
1 & [\mu_{n-1}] \\
0 & 1
\end{array}\right)w,1\right]+g_{n-1},\]
where $g_{n-1}=g_{n-1}'+g_{n-1}''\in B(n-1).$ Write $f=f_n+f_{n-1}+f'.$ We get
\begin{align*}
\left(\begin{array}{cc}
1 & p^{n-2}\\
0 & 1
\end{array}\right)f-f &= \left[\left(\begin{array}{cc}
1 & p^{n-2}\\
0 & 1
\end{array}\right)f_n-f_n\right]+
\left[\left(\begin{array}{cc}
1 & p^{n-2}\\
0 & 1
\end{array}\right)f_{n-1}-f_{n-1}\right] \\
&\in (\Ker~ T_{-1,0}, \Ker~ T_{1,2}).
\end{align*}
For $e=1$, we have
\begin{align*}
\left(\begin{array}{cc}
1 & p^{n-2}\\
0 & 1
\end{array}\right)f_n'-f_n' &=\sum\limits_{\substack{\mu\in I_n}} a_0\left[(\mu_{n-1}-(*))^{p-r}-\mu_{n-1}^{p-r}\right]\left[g^0_{n,\mu},1\right],
\end{align*}
where 
\[(*)=\sum\limits_{\substack{s=1}}^{p-1}(-1)^{p-s}\frac{{p\choose s}}{p}\mu_{n-2}^s.\]
Then, by Lemmas \ref{modulo kk} (1) and \ref{kernel condition} (1), modulo 
$(\Ker~T_{-1,0}, \Ker~T_{1,2})$, the above expression becomes
\begin{equation}\label{equ1}
-\sum\limits_{\substack{\mu\in I_{n-1}}}a_0{p-r\choose p-1-r}(*)
\left[g^0_{n-2,[\mu]_{n-2}}\left(\begin{array}{cc}
1& [\mu_{n-2}]\\
0& 1
\end{array}\right)w,1\right].
\end{equation}
Writing $f_{n-1}=f_{n-1}'+f_{n-1}''$, we have,
\[\left(\begin{array}{cc}
1 & p^{n-2}\\
0 & 1
\end{array}\right)f_{n-1}-f_{n-1}=
\left[\left(\begin{array}{cc}
1 & p^{n-2}\\
0 & 1
\end{array}\right)f_{n-1}'-f_{n-1}'\right]+\left[\left(\begin{array}{cc}
1 & p^{n-2}\\
0 & 1
\end{array}\right)f_{n-1}''-f_{n-1}''\right].\]
No term in the first summand of the above equation can cancel a term in (\ref{equ1}). Also, by Lemma \ref{possible mu power in kk} (2), the possible powers, say $k,$ of $\mu_{n-2}$ in $f_{n-1}''$ must satisfy either $0\leq k\leq r$ or $k=r+1.$ As $r<p-1$, we have $\max (r+1)=p-1.$ So the maximum power of $\mu_{n-2}$ in the second summand of the above equation is $p-2.$ In both the cases, the term involving $\mu_{n-2}^{p-1}$ in (\ref{equ1}) will not get cancelled. Since there is no reduction, this term must be in $\Ker~T_{-1,0},$ which is not possible by Lemma \ref{kernel condition} (2). Thus we arrive at a contradiction. So $i_{n-1}$ can not be $p-r.$ Thus one can always modify $f_n'$ by a vector $g_{n-1}'$ in $B(n-1).$ Similarly, working with $f_n'',$ we can modify it by a vector $g_{n-1}''$ in $B(n-1).$ Thus $f_n$ is congruent to a vector $f_{n-1}$ in $B(n-1)$ modulo $(\Ker~T_{-1,0}, \Ker~T_{1,2})$ and hence by induction, $f$ is congruent to a vector $f_1$ in $B(1)$ modulo $(\Ker~T_{-1,0}, \Ker~T_{1,2}).$ Write $f_1=f_1'+f_1'',$ where 
\[f_1'=\sum\limits_{\substack{i}}\sum\limits_{\substack{\mu\in I_1}}
a_i\mu^i\left[g^0_{1,\mu},1\right],\]
and 
\[f_1''=\sum\limits_{\substack{j}}\sum\limits_{\substack{\mu\in I_1}}
b_j\mu^j
\left[\left(\begin{array}{cc}
1      & \mu\\
0      & 1
\end{array}\right)
w,1\right].\]
Considering the action of $\left(\begin{array}{cc}
1 & 1\\
0 & 1
\end{array}\right)$ on $f_1$ as in the previous case, we have $0\leq i\leq p-1-r$ and $0\leq j\leq r$, by Lemma \ref{modulo kk} and Lemma \ref{possible mu power in kk}. Then, by Lemma \ref{modulo kk} and Lemma \ref{kernel condition}, modulo $(\Ker~T_{-1,0}, \Ker~T_{1,2}),$ we get $f_1'=\left[\beta, 1\right]$ and $f_1''=\left[{\rm Id}, 1\right].$ Thus we can conclude that
\[f=c\left[{\rm Id}, 1\right]+d\left[\beta, 1\right].\]

This finishes the proof of Theorem \ref{thm-main}.

\subsection{A remark on $\pi_r$}\label{isodetails}

We show that there is no isomorphism between 
\[\tau_r = \frac{{\rm ind}_{KZ}^G \sigma_r}{(T)}\]
and
\[\pi_r=\frac{{\rm ind}_{IZ}^G \chi_r}{({\rm Ker}~ T_{-1,0}, {\rm Ker}~ T_{1,2})}\]
when $f \neq 1$; i.e., $F$ is not a totally ramified extension of $\mathbb Q_p$ (cf. Remark \ref{new}).

Note that any $G$-linear isomorphism 
\[\varphi:\pi_r \rightarrow \tau_r\]
must preserve $I(1)$-invariants and the corresponding $I$-eigenvalues.

Suppose $e=1, f\neq 1$; i.e., $F/\mathbb Q_p$ is unramified. In this case, $s_n^{q-1-r+p^l}$, for $n\geq 2$, is an $I(1)$-invariant in $\pi_r$ such that
\[\left(\begin{array}{cc}
a & b \\ \varpi c & d 
\end{array}\right) \cdot s_n^{q-1-r+p^l} = a^{r-p^l}d^{p^l} \cdot s_n^{q-1-r+p^l}
\]
by Lemma \ref{i action}. By \cite[Theorem 1.2]{hen18}, a basis of the $I(1)$-invariants in $\tau_r$ consists of the vectors 
\[{\rm Id}\otimes\bigotimes_{j=0}^{f-1}x_j^{r_j}, \alpha\otimes\bigotimes_{j=0}^{f-1}y_j^{r_j}, c_n^{p^l(r_l+1)}, \beta  c_n^{p^l(r_l+1)}\]
for $n \geq 1$, where
\[c_n^k = \sum\limits_{\substack{\mu\in I_n}}\left(\begin{array}{cc}
\varpi^n & \mu\\
0        & 1
\end{array}\right)\otimes\mu_{n-1}^k \bigotimes_{j=0}^{f-1}x_j^{r_j}.\]
By \cite[Lemma 3.6]{hen18},
\[\left(\begin{array}{cc}
a & b \\ \varpi c & d 
\end{array}\right) \cdot c_n^k = a^{r-2k}(ad)^k \cdot c_n^k,
\]
and it follows that there is no $I(1)$-invariant vector in $\tau_r$ with $I$-eigenvalue $a^{r-p^l}d^{p^l}$. Thus there is no vector in $\tau_r$ where $s_n^{q-1-r+p^l}$ can be mapped under $\varphi$. This gives a contradiction.

Now, suppose $e>1, f>1$. In this case $t_n^{r+p^l}$, $n\geq 2$, is an $I(1)$-invariant vector in $\pi_r$ with $I$-eigenvalue $a^{q-1-p^l}d^{r+p^l}$, by Lemma \ref{i action}. A basis of the $I(1)$-invariants in $\tau_r$ consists of the vectors 
\[{\rm Id}\otimes\bigotimes_{j=0}^{f-1}x_j^{r_j}, \alpha\otimes\bigotimes_{j=0}^{f-1}y_j^{r_j},c_n^{p^l(r_l+1)}, \beta  c_n^{p^l(r_l+1)}, d_n^l, \beta d_n^l,\]
for $n \geq 1$, where 
\[d_n^l=\sum\limits_{\substack{\mu\in I_n}}\left(\begin{array}{cc}
\varpi^n & \mu\\
0        & 1
\end{array}\right)\otimes\bigotimes_{l\neq j=0}^{f-1}x_j^{r_j}\otimes x_l^{r_l-1}y_l,\]
by \cite[Theorem 1.2]{hen18}. By \cite[Lemma 3.6]{hen18},
\[\left(\begin{array}{cc}
a & b \\ \varpi c & d 
\end{array}\right) \cdot d_n^l = a^{r-2p^l}(ad)^{p^l} \cdot d_n^l,
\]
and once again it can be checked that there is no $I(1)$-invariant vector in $\tau_r$ with $I$-eigenvalue $a^{q-1-p^l}d^{r+p^l}$, where $t_n^{r+p^l}$ can be mapped under $\phi$, giving a contradiction. 

\section*{Acknowledgements} 

The second author would like to thank Council of Scientific and Industrial Research, Government of India (CSIR) and Industrial Research and Consultancy Centre, IIT Bombay (IRCC) for financial support.

\end{document}